 \newtheorem{Theorem}{Theorem}[section]
 \newtheorem{Corollary}[Theorem]{Corollary}
 \newtheorem{Lemma}[Theorem]{Lemma}
 \newtheorem{Proposition}[Theorem]{Proposition}
 \newtheorem{Question}[Theorem]{Question}
 \newtheorem{Definition}[Theorem]{Definition}
 \newtheorem{Conjecture}[Theorem]{Conjecture}
 \newtheorem{Remark}[Theorem]{Remark}
 \numberwithin{equation}{section}
\begin{document}

\title[A remark on a weighted version of Suita conjecture for higher derivatives]
 {A remark on a weighted version of Suita conjecture for higher derivatives}

\author{Qi'an Guan}
\address{Qi'an Guan: School of Mathematical Sciences, Peking University, Beijing 100871, China.}
\email{guanqian@math.pku.edu.cn}

\author{Xun Sun}
\address{Xun Sun: School of Mathematical Sciences, Peking University, Beijing 100871, China.}
\email{sunxun@stu.pku.edu.cn}

\author{Zheng Yuan}
\address{Zheng Yuan: School of Mathematical Sciences, Peking University, Beijing 100871, China.}
\email{zyuan@pku.edu.cn}

\thanks{}

\subjclass[2020]{32D15, 32E10, 32L10, 32U05, 32W05}

\keywords{open Riemann surface, Green function, Suita conjecture}

\date{}

\dedicatory{}

\commby{}

%%% ----------------------------------------------------------------------

\begin{abstract}
In this article, we consider  the set of points for the holding of the equality in a weighted version of Suita conjecture for higher derivatives, and give relations between  the set and the integer valued points of a class of harmonic functions (maybe multi-valued). For planar domains bounded by finite analytic closed curves, we give relations between the set and Dirichlet problem.  
\end{abstract}

%%% ----------------------------------------------------------------------
\maketitle
%%% ----------------------------------------------------------------------

\section{introduction}
Let $\Omega$ be an open Riemann surface, which  admits a nontrivial Green function $G_{\Omega}$. Let $z_0\in\Omega$, and let $w$ be a local coordinate on a neighborhood 
$V_{z_0}$ of $z_0$ satisfying $w(z_0)$=0. Let $c_{\beta}(z)$ be the logarithmic capacity (see\cite{L-K}) on $\Omega$ which is defined by 
$$c_{\beta}(z_0):=\exp\lim_{z\rightarrow z_0}(G_{\Omega}(z,z_0)-\log|w(z)|).$$
Let $B_{\Omega}(z_0)$ be the Bergman kernel function on $\Omega$. An open question was posed by Sario and Oikawa in (\cite{L-K}):
find a relation between the magntitudes of the quantities $\sqrt{\pi B_{\Omega}(z)}$ and $c_{\beta}(z)$. 
In (\cite{Suita}), Suita posed the following conjecture.

\begin{Conjecture}
	 $\pi B_{\Omega}(z_0)\ge (c_{\beta}(z_0))^2$ holds, and the equality holds if and only if 
$\Omega$ is conformally equivalent to the unit disc less a (possible) closed set of inner capacity zero. 
\end{Conjecture}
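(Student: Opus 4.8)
The plan is to recast both sides of the inequality as extremal $L^2$ quantities and then compare them through the Ohsawa--Takegoshi extension theorem with sharp constant. Concretely, I would work in the Bergman space of square-integrable holomorphic $(1,0)$-forms on $\Omega$ and use the extremal characterization (with the standard normalization of $B_\Omega$)
\[
\frac{1}{B_\Omega(z_0)}=\inf\Big\{\textstyle\int_\Omega i\,f\wedge\bar f : f \text{ holomorphic } (1,0)\text{-form with } f=dw+O(w)\,dw \text{ near } z_0\Big\},
\]
so that exhibiting one competitor $f$ of small norm immediately bounds $1/B_\Omega(z_0)$ from above, hence $B_\Omega(z_0)$ from below. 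The logarithmic capacity enters through its very definition: since $e^{G_\Omega(\cdot,z_0)}\sim c_\beta(z_0)\,|w|$ near $z_0$, the function $2G_\Omega(\cdot,z_0)$ is the natural subharmonic weight whose pole $e^{-2G_\Omega}\sim 1/\big(c_\beta(z_0)^2|w|^2\big)$ matches the germ $dw$ we wish to realize.

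For the inequality I would apply the Ohsawa--Takegoshi $L^2$ extension theorem, in its optimal-constant form, on $\Omega$ with weight $e^{-2G_\Omega(\cdot,z_0)}$, extending the prescribed germ at $z_0$ to a global holomorphic $(1,0)$-form $f$. The sharp constant yields $\int_\Omega i\,f\wedge\bar f\le \pi/c_\beta(z_0)^2$, where the factor $c_\beta(z_0)^2$ is produced precisely by the asymptotics of $G_\Omega$ at the pole. Feeding this competitor into the extremal characterization gives $1/B_\Omega(z_0)\le \pi/c_\beta(z_0)^2$, that is, $\pi B_\Omega(z_0)\ge c_\beta(z_0)^2$.

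The equality case is where I expect the real difficulty. My approach would follow the minimal-$L^2$-integral strategy: introduce
\[
G(t)=\inf\Big\{\textstyle\int_{\{2G_\Omega(\cdot,z_0)<-t\}} i\,f\wedge\bar f : f \text{ realizes the germ } dw \text{ at } z_0\Big\},\qquad t\ge 0,
\]
and establish that $G$, regarded as a function of $e^{-t}$, is concave. Equality in the Suita inequality forces this concave profile to be \emph{affine}, and the bulk of the work is to extract rigidity from this linearity: an affine profile means the extremal form saturates the extension estimate on every sublevel set, which forces $G_\Omega(\cdot,z_0)$ to have no critical points and its harmonic conjugate $\widetilde{G}_\Omega(\cdot,z_0)$ to have trivial periods, so that $\exp\big(G_\Omega(\cdot,z_0)+i\widetilde{G}_\Omega(\cdot,z_0)\big)$ is a well-defined holomorphic function realizing $\Omega$ as the unit disc less a closed set of inner capacity zero.

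Finally I would address the phrase ``less a closed set of inner capacity zero'': such a set is negligible for both the Green function and the square-integrable holomorphic forms, being removable for the relevant extremal problems, so the conformal map constructed above extends across it and neither $B_\Omega$ nor $c_\beta$ is affected. The main obstacle throughout is the rigidity step in the equality case---converting the \emph{linearity} of the concave minimal-norm profile into an explicit conformal equivalence with the disc---since the inequality itself is a fairly direct consequence of the optimal $L^2$ extension theorem once the weight $2G_\Omega(\cdot,z_0)$ is identified.
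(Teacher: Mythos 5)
This statement is Suita's conjecture itself; the paper does not prove it but recalls it as a cited result (inequality part: Blocki and Guan--Zhou; equality part: Guan--Zhou \cite{guan-zhou13ap}), so there is no in-paper proof to compare against line by line. Your outline does correctly identify the two standard ingredients: the inequality follows from the optimal-constant $L^2$ extension theorem applied with the weight $2G_\Omega(\cdot,z_0)$, whose pole asymptotics produce exactly the factor $c_\beta(z_0)^2$; and the equality case is handled by a rigidity analysis, which in the modern formulation (the one this paper builds on, cf.\ Theorem \ref{thm3}) is the linearity criterion for the concave minimal $L^2$ integral $G(t)$.

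However, as a proof the proposal has a genuine gap precisely where you acknowledge the difficulty lies: the rigidity step is named but not carried out, and the intermediate claim you do make is not the right one. Equality does \emph{not} force $G_\Omega(\cdot,z_0)$ to have no critical points --- on any multiply connected planar domain the Green function has critical points, and that is irrelevant to the problem. The correct output of the rigidity argument is that the character $\chi_{z_0}$ is trivial, i.e.\ that all periods of the harmonic conjugate $\widetilde{G}_\Omega(\cdot,z_0)$ lie in $2\pi\mathbb{Z}$ (equivalently, the multiplicative function $f_{z_0}$ with $|f_{z_0}|=p^*(e^{G_\Omega(\cdot,z_0)})$ descends to a single-valued holomorphic function on $\Omega$); by Suita's characterization this is exactly the condition for $\Omega$ to be the disc less a closed set of inner capacity zero. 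Extracting the triviality of $\chi_{z_0}$ from the affineness of $G(t)$ is the substantive content of Guan--Zhou's proof (one must identify the extremal form on each sublevel set and show it is a constant multiple of $\pi^*(f_{z_0}^{\,\prime}\,(\cdot))$ pushed down, which is only globally well defined when the character is trivial), and none of that is present in the sketch. Until that step is supplied, the ``if and only if'' in the equality statement is unproved in both directions.
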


The inequality part of Suita conjecture for bounded planar domain was proved by Blocki (\cite{Blocki}), and original form of inequality part was proved
by Guan-Zhou(\cite{G-ZhouL2_CR}). The equality part of Suita conjecture was proved by Guan-Zhou (\cite{guan-zhou13ap}), which completed the proof of Suita conjecture.

We recall some notations (see \cite{OF81}, see also \cite{guan-zhou13ap,G-Y}).
 Let $p:\Delta\rightarrow \Omega$ be the universal covering from unit disc $\Delta$ to $\Omega$.
 We call the holomorphic function $f$ on $\Delta$ a multiplicative function,
 if there is a character $\chi$, which is the representation of the fundamental group of $\Omega$, such that $g^{*}f=\chi(g)f$,
 where $|\chi|=1$ and $g$ is an element of the fundamental group of $\Omega$. Denote the set of such kinds of $f$ by $\mathcal{O}^{\chi}(\Omega)$.

It is known that for any harmonic function $u$ on $\Omega$,
there exist a $\chi_{u}$ and a multiplicative function $f_u\in\mathcal{O}^{\chi_u}(\Omega)$,
such that $|f_u|=p^{*}\left(e^{u}\right)$.
If $u_1-u_2=\log|f|$, then $\chi_{u_1}=\chi_{u_2}$,
where $u_1$ and $u_2$ are harmonic functions on $\Omega$ and $f$ is a holomorphic function on $\Omega$.
Recall that for the Green function $G_{\Omega}(z,z_0)$,
there exist a $\chi_{z_0}$ and a multiplicative function $f_{z_0}\in\mathcal{O}^{\chi_{z_0}}(\Omega)$ such that $|f_{z_0}(z)|=p^{*}\left(e^{G_{\Omega}(z,z_0)}\right)$ (see \cite{Suita}).

Let $u$ be a harmonic function on $\Omega$, and let $B_{\Omega,e^{-2u}}(z_0)$ be the Bergman kernel with the weight $e^{-2u}$. In \cite{yamada}, Yamada posed the following weighted version of Suita conjecture, which is so-called extended Suita conjecture.
\begin{Conjecture}
	$c_{\beta}^2(z_0)\le \pi e^{-2u(z_0)}B_{\Omega,e^{-2u}}(z_0)$, and equality holds if and only if $\chi_{z_0}=\chi_{-u}$.
\end{Conjecture}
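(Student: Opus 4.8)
The plan is to treat the inequality and the equality characterization by separate mechanisms: the inequality through the optimal-constant $L^2$ extension theorem of Guan--Zhou, and the equality through the rigidity attached to that extension, transported to the universal cover $p:\Delta\to\Omega$. For the inequality I would start from the extremal description of the weighted Bergman kernel: writing a holomorphic $(1,0)$-form near $z_0$ as $F=\tilde F\,dw$ in the local coordinate $w$, one has
\[
e^{-2u(z_0)}B_{\Omega,e^{-2u}}(z_0)=\sup\left\{\frac{e^{-2u(z_0)}|\tilde F(z_0)|^2}{\int_\Omega|F|^2e^{-2u}}:\ F\ \text{holomorphic},\ 0<\int_\Omega|F|^2e^{-2u}<\infty\right\}.
\]
It therefore suffices to exhibit one admissible $F$ of small weighted norm relative to $|\tilde F(z_0)|$. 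I would produce such an $F$ by applying the sharp Ohsawa--Takegoshi-type extension theorem, in the optimal-constant form of Guan--Zhou, with weight governed by $2G_\Omega(\cdot,z_0)$ together with $-2u$. The logarithmic pole of $G_\Omega(\cdot,z_0)$ isolates the point $z_0$, and by the definition of $c_\beta$ its leading coefficient is $\log c_\beta(z_0)$; feeding this into the optimal constant yields $F$ with $\int_\Omega|F|^2e^{-2u}\le \pi\,c_\beta(z_0)^{-2}e^{-2u(z_0)}|\tilde F(z_0)|^2$, which is exactly the asserted inequality.

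\textbf{The equality case.} The delicate direction is the ``if and only if.'' I would work with the minimal extension $F_0$ realizing the Bergman kernel and study the one-variable function
\[
h(t)=\inf\left\{\int_{\{2G_\Omega(\cdot,z_0)<-t\}}|F|^2e^{-2u}:\ F\ \text{holomorphic},\ \tilde F(z_0)=1\right\},
\]
which, after a monotone change of variable in $t$, is concave by the Guan--Zhou concavity theorem for minimal $L^2$ extensions. Equality in the extended Suita conjecture is equivalent to $h$ being \emph{affine}; affineness forces the extremal extension to saturate the weighted estimate on every sublevel set $\{G_\Omega(\cdot,z_0)<-t\}$ simultaneously, not merely infinitesimally at $z_0$.

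\textbf{From affineness to the characters.} To extract the character condition I would pass to $\Delta$ and set $\Phi:=(p^*F_0)\,f_{-u}$, where $f_{-u}\in\mathcal{O}^{\chi_{-u}}(\Omega)$ satisfies $|f_{-u}|=p^*(e^{-u})$; then $\int_\Omega|F_0|^2e^{-2u}=\int_D|\Phi|^2$ over a fundamental domain $D$, and $\Phi$ is a holomorphic $(1,0)$-form of character $\chi_{-u}$ (since $F_0$ is single-valued on $\Omega$). The affineness of $h$, together with $|f_{z_0}|=p^*(e^{G_\Omega(\cdot,z_0)})$, forces $|\Phi|^2$ to be a constant multiple of $|df_{z_0}|^2$; as $df_{z_0}$ is a holomorphic form of character $\chi_{z_0}$, the quotient $\Phi/df_{z_0}$ is a holomorphic multiplicative function of constant modulus, hence a constant, whose character must then be trivial. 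Comparing characters gives $\chi_{-u}=\chi_{z_0}$. Conversely, if $\chi_{z_0}=\chi_{-u}$, then $f_{-u}^{-1}\,df_{z_0}$ has trivial character and descends to a single-valued holomorphic form on $\Omega$ that serves as an extremal $F_0$ making $h$ affine, hence forcing equality. I expect the genuine obstacle to be exactly this upgrade: passing from affineness of the scalar function $h$ to the pointwise identity $|\Phi|^2=c\,|df_{z_0}|^2$ on all of $\Delta$, which rests on the uniqueness of the minimal $L^2$ extension and on the structure of $\mathcal{O}^{\chi}(\Omega)$.
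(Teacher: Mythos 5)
This statement is Yamada's extended Suita conjecture, which the paper does not prove: it is stated as a \emph{Conjecture} for context, and the paper explicitly attributes its resolution to Guan--Zhou \cite{guan-zhou13ap} (with the generalizations in \cite{G-M,G-Y}). So there is no in-paper argument to compare yours against; what you have written is a sketch of the known Guan--Zhou route, and it should be judged as such.

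As a roadmap your proposal is accurate: the inequality does follow from the optimal-constant $L^2$ extension theorem applied with the weight $2G_{\Omega}(\cdot,z_0)-2u$ and the Robin constant $\log c_{\beta}(z_0)$, and the equality characterization is correctly located in the rigidity of the extremal extension, transported to $\Delta$ via $f_{z_0}$ and $f_{-u}$; your character bookkeeping ($\Phi$ has character $\chi_{-u}$, $df_{z_0}$ has character $\chi_{z_0}$, a multiplicative function of constant modulus is a constant of trivial character) is consistent, and the converse direction via $f_{-u}^{-1}df_{z_0}$ descending when $\chi_{z_0}=\chi_{-u}$ is the standard construction. The genuine gap is the one you yourself flag, and it is not a small one: (i) passing from equality of the two sides at the single point $z_0$ to \emph{affineness} of $h$ requires computing the boundary asymptotics $\lim_{t\to+\infty}e^{t}h(t)$ from the local expansion $G_{\Omega}=\log|w|+\log c_{\beta}(z_0)+o(1)$ and matching it against $h(0)$ under concavity, which you do not carry out; and (ii) the implication from affineness to the pointwise identity $|\Phi|^2=c\,|df_{z_0}|^2$ on all of $\Delta$ is precisely the content of the characterization-of-linearity theorems in \cite{guan-zhou13ap,G-Y,G-M-Y} and occupies the bulk of those proofs (analysis along the level sets of $G_{\Omega}$, uniqueness of the minimal extension on each sublevel set, and the degree-one property of $f_{z_0}$ on a fundamental domain needed to evaluate $\int_{D}|df_{z_0}|^2=2\pi$). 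Invoking these as black boxes is legitimate if you cite them, but as a self-contained proof the argument is not complete at its decisive step.
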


 Note that $\Omega$ is conformally equivalent to the unit disc less a (possible) closed set of inner capacity zero if and only if $\chi_{z_0}\equiv1$ (see \cite{Suita}), thus the extended Suita conjecture is a general version of Suita conjecture. 
 In \cite{guan-zhou13ap}, Guan-Zhou proved the extended Suita conjecture (one can see more general versions  in \cite{G-M,G-Y}).

 Let us consider the weighted Bergman kernel for high derivatives (see \cite{Berg70,G-M-Y}). Let $k$ be a nonnegative integer, and let $I=(w)$ be an ideal of $\mathcal{O} _{z_0}$. Let $h=w^kdw$ be a holomorphic (1,0) form 
on $V_{z_0}$. Denote that 
 $$B_{\Omega,\rho}^{(k)}(z_0):=\frac{2}{\inf\left\{\int_{\Omega}|\widetilde{F}|^2\rho:\widetilde{F}\in H^0(\Omega,\mathcal{O}(K_{\Omega}))\,\&\,(\widetilde{F}-h,z_0)\in I^{k+1}\otimes\mathcal{O}(K_{\Omega})_{z_0}\right\}},$$
 where $\rho$ is a nonnegative Lebesgue measurable function on $\Omega$ and $|\widetilde{F}|^2:=\sqrt{-1}\widetilde F\wedge\overline{\widetilde F}$ for any  $(1,0)$ form $\widetilde F$. It is clear that 
 $$B_{\Omega,\rho}^{(k)}(z_0)=\sup\left\{\left|\frac{f^{(k)}(z_0)}{k!}\right|^2:\exists F\in H^0(\Omega,\mathcal{O}(K_{\Omega})),\,F|_{V_{z_0}}=fdw\,\&\,\frac{1}{2}\int_{\Omega}|F|^2\rho\le1\right\}.$$
 When $k=0$, $B_{\Omega,\rho}^{(k)}(z_0)$ degenerates to the weighted Bergman kernel $B_{\Omega,\rho}(z_0)$.  
 
 Then it is natural to ask
\begin{Question}
Can one prove a weighted version of Suita conjecture for higher derivatives related to the kernel function $B^{(k)}_{\Omega,\rho}(z_0)$?
\end{Question}

 Guan-Mi-Yuan \cite{G-M-Y} gave an affirmative answer to this question. Some related results can be referred to \cite{lz,x-z}.

Let $c(t)$ be a positive measurable function on $(0,+\infty)$ satisfying $c(t)e^{-t}$ is decreasing on $(0,+\infty)$ and $\int_{0}^{+\infty}c(t)e^{-t}dt<+\infty$. 
Denote that 
$$\rho_{z_0}:=e^{-(2(k+1-a)G_{\Omega}(\cdot,z_0)+2v)}c(-2aG_{\Omega}(\cdot,z_0)),$$ where $a>0$ is a constant and $v$ is a subharmonic function on $\Omega$.

\begin{Theorem}[\cite{G-M-Y}]
\label{thm3}
$$(c_{\beta}(z_0))^{2(k+1)}\le \left(\int_0^{+\infty}c(t)e^{-t}dt\right)\frac{\pi }{a}e^{-2v(z_0)}B_{\Omega,\rho_{z_0}}^{(k)}(z_0)$$
holds for any $z_0\in\Omega$ satisfying $v(z_0)>-\infty$.
Moreover, for $z_0\in\Omega$ with $v(z_0)>-\infty$, equality 
\begin{equation}
	\label{eq:221107}(c_{\beta}(z_0))^{2(k+1)}=\left(\int_0^{+\infty}c(t)e^{-t}dt\right)\frac{\pi }{a}e^{-2v(z_0)}B_{\Omega,\rho_{z_0}}^{(k)}(z_0)
	\end{equation}
holds if and only if the following statements hold: 

$(1)$ $v=\log|g|+u$, where $g$ is a holomorphic function on $\Omega$ such that $g(z_0)\neq 0$ and $u$ is a harmonic function on $\Omega$;

$(2)$ $\chi_{-u}=(\chi_{z_0})^{k+1}$, where $\chi_{-u}$ and $\chi_{z_0}$ are the characters associated to the functions $-u$ and $G_{\Omega}(\cdot,z_0)$ respectively.
\end{Theorem}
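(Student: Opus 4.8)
The plan is to run the method of minimal $L^2$ integrals and their concavity, in the form developed by Guan--Zhou, adapted to the jet condition $(\widetilde F-h,z_0)\in I^{k+1}\otimes\mathcal{O}(K_{\Omega})_{z_0}$. First I would recast the weight into the shape the method requires: set $\psi:=2aG_{\Omega}(\cdot,z_0)$, which is subharmonic, $\le 0$, with a logarithmic pole at $z_0$, and set $\widetilde\varphi:=2(k+1-a)G_{\Omega}(\cdot,z_0)+2v$. Then $\rho_{z_0}=c(-\psi)\,e^{-\widetilde\varphi}$, and the combination $\widetilde\varphi+\psi=2(k+1)G_{\Omega}(\cdot,z_0)+2v$ is subharmonic, which is exactly the curvature positivity the machinery needs. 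For $t\ge 0$ define
$$G(t):=\inf\Big\{\int_{\{2aG_{\Omega}<-t\}}|\widetilde F|^2 e^{-\widetilde\varphi}\ :\ \widetilde F\in H^0(\{2aG_{\Omega}<-t\},\mathcal{O}(K_{\Omega})),\ (\widetilde F-h,z_0)\in I^{k+1}\otimes\mathcal{O}(K_{\Omega})_{z_0}\Big\}.$$

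For the inequality I would invoke the concavity theorem: with $\widehat h(t):=\int_t^{+\infty}c(s)e^{-s}\,ds$, the function $r\mapsto G(\widehat h^{-1}(r))$ is concave on $(0,\int_0^{+\infty}c(s)e^{-s}\,ds)$, and the full $c$-weighted minimal integral $\inf_{\widetilde F}\int_\Omega|\widetilde F|^2\rho_{z_0}=2/B^{(k)}_{\Omega,\rho_{z_0}}(z_0)$ is recovered from $G$ by integrating $-dG(t)$ against $c$. Concavity, together with $G(\widehat h^{-1}(r))\to0$ as $r\to0^+$ and the asymptotic slope there, then produces the bound. That slope is governed by the behaviour at $z_0$: there the jet condition forces $\widetilde F=w^k dw+O(w^{k+1})$, so $|\widetilde F|^2 e^{-\widetilde\varphi}\approx|w|^{2k}e^{-2(k+1-a)G_{\Omega}}e^{-2v}$, and inserting $G_{\Omega}(z,z_0)=\log|w|+\log c_{\beta}(z_0)+o(1)$ and $v(z)\to v(z_0)$ extracts exactly the constants $(c_{\beta}(z_0))^{2(k+1)}$, $e^{-2v(z_0)}$ and $\pi/a$ in the statement. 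The main analytic obstacle is precisely this concavity with a \emph{sharp} constant: it rests on the optimal-constant Ohsawa--Takegoshi extension applied level-by-level (with $G_{\Omega}$ controlling the singularity) and on showing that one extremal section is simultaneously optimal across all levels.

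For the equality part, equality forces $r\mapsto G(\widehat h^{-1}(r))$ to be \emph{linear}. By the characterization of the linear case, the level-wise minimizers patch into a single genuine section $F\in H^0(\Omega,\mathcal{O}(K_{\Omega}))$ realizing the extremal jet, and $F$ satisfies the pointwise identity $|F|^2=C\,e^{2v}\,e^{2kG_{\Omega}(\cdot,z_0)}$ on $\Omega$ for a constant $C>0$ (equivalently, $|F|^2 e^{-\widetilde\varphi}$ is, up to a constant, the flat measure along the level sets of $G_{\Omega}$). From this I read off (1): taking $\tfrac12\log$ gives $v=\log|F|-kG_{\Omega}-\tfrac12\log C$ locally; since $F\sim w^k dw$ has a zero of order exactly $k$ at $z_0$ and $kG_{\Omega}$ carries the matching logarithmic pole, the divisor part at $z_0$ cancels, so the Riesz measure of $v$ equals $2\pi$ times the divisor of the \emph{remaining} zeros of $F$, a locally finite sum of integer point masses. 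This is exactly the condition $v=\log|g|+u$ with $g$ holomorphic, $g(z_0)\ne0$, and $u$ harmonic.

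For (2) I would pass to multiplicative functions. Writing $v=\log|g|+u$ and dividing the identity by $g$ gives $|F/g|=\sqrt{C}\,e^{u}\,e^{kG_{\Omega}}$, so after lifting along $p\colon\Delta\to\Omega$ the modulus of $F/g$ agrees with that of a product built from $f_{z_0}$ (with $|f_{z_0}|=p^{*}e^{G_{\Omega}}$, character $\chi_{z_0}$) and the multiplicative function attached to $u$ (whose character is $\chi_{u}=\chi_{-u}^{-1}$). The model extremal form is $\sim d(f_{z_0}^{k+1})$, since $f_{z_0}^{k+1}\sim w^{k+1}$ makes $d(f_{z_0}^{k+1})\sim w^{k}dw$ match the jet, and $d(f_{z_0}^{k+1})$ carries character $(\chi_{z_0})^{k+1}$. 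Hence $F$ equals, up to a unimodular constant, a multiplicative form whose character is $(\chi_{z_0})^{k+1}$ twisted by the character of $u$; demanding that $F$ descend to a single-valued section on $\Omega$, i.e.\ that this character be trivial, forces the relation $\chi_{-u}=(\chi_{z_0})^{k+1}$. I would fix the precise sign and normalization by checking the case $k=0$, where the relation reduces to $\chi_{-u}=\chi_{z_0}$, exactly the equality condition of the extended Suita conjecture. The main structural obstacle is this final translation: turning the analytic extremal identity into the topological character identity requires careful bookkeeping of the canonical-bundle monodromy (the form $F$ versus its coefficient function on $\Delta$), so that the exponent $k+1$ and the placement of $\chi_{-u}$ emerge correctly.
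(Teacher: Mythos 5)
This theorem is not proved in the paper at all: it is imported verbatim from \cite{G-M-Y}, so there is no in-paper argument to compare your attempt against. Your outline does correctly reconstruct the strategy of that reference --- rewrite $\rho_{z_0}$ as $c(-\psi)e^{-\widetilde\varphi}$ with $\psi=2aG_{\Omega}(\cdot,z_0)$, form the minimal $L^2$ integrals $G(t)$ over the sublevel sets $\{2aG_{\Omega}<-t\}$ with the jet condition, use concavity of $G(\widehat h^{-1}(r))$ plus the boundary slope extracted from $G_{\Omega}(z,z_0)=\log|w|+\log c_{\beta}(z_0)+o(1)$ to get the inequality, and use the characterization of the linear case to get the equality conditions. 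As a proof, however, it rests entirely on two black boxes you do not supply (the concavity theorem with the sharp constant, and the structure theorem for the linear case), and the pointwise extremal identity you write, $|F|^2=C\,e^{2v}e^{2kG_{\Omega}(\cdot,z_0)}$, is not quite right: it equates a $(1,1)$-form with a function, and the correct identity coming from the linearity characterization is $F=c_0\,\tilde g\,p_*\bigl(f_u f_{z_0}^{k}\,df_{z_0}\bigr)$, so that $|F|^2$ carries the extra factor $|p_*(df_{z_0})|^2$. Your Riesz-measure argument for statement $(1)$ and the character bookkeeping for statement $(2)$ survive this correction (the $df_{z_0}$ factor contributes character $\chi_{z_0}$, which is why the exponent $k+1$ rather than $k$ appears), but as written the derivation of $(1)$ from the uncorrected identity is not literally valid.
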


For Suita conjecture, if $\pi B_{\Omega}(z_0)= (c_{\beta}(z_0))^2$ holds for a point $z_0\in\Omega$, then it follows from the solution of Suita conjecture that $\pi B_{\Omega}(z_0)= (c_{\beta}(z_0))^2$ holds for any $z_0\in\Omega$, i.e., the set of points $z_0$ such that $\pi B_{\Omega}(z_0)= (c_{\beta}(z_0))^2$ holds is either $\emptyset$ or $\Omega$.

In this article, we discuss the set of points for the holding of the equality in the weighted version of Suita conjecture for higher derivatives.

\subsection{Main result}
\

Let $\Omega$ be an open Riemann surface, which admits a nontrivial Green function $G_{\Omega}$. Let $k$ be a nonnegative integer. Let $c(t)$ be a positive measurable function on $(0,+\infty)$ satisfying $c(t)e^{-t}$ is decreasing on $(0,+\infty)$ and $\int_{0}^{+\infty}c(t)e^{-t}dt<+\infty$. 
Denote that 
$$\rho_{z_0}:=e^{-(2(k+1-a)G_{\Omega}(\cdot,z_0)+2v)}c(-2aG_{\Omega}(\cdot,z_0)),$$ where $a>0$ is a constant and $v$ is a subharmonic function on $\Omega$.

It is clear that, if there is $\tilde g\in\mathcal{O}(\Omega)$ such that $v=\log|\tilde g|$  and $\Omega$ is conformally equivalent to the unit disc less a (possible) closed set of inner capacity zero, then $\chi_{z_0}=1$, which implies that equality \eqref{eq:221107} holds for any $z_0\in\Omega$ with $v(z_0)>-\infty$ by Theorem \ref{thm3}. The following theorem shows that the converse to the result is also true.

\begin{Theorem}
\label{thm4}Equality
\begin{equation}
	\label{eq:221129a}
(c_{\beta}(z_0))^{2(k+1)}=\left(\int_0^{+\infty}c(t)e^{-t}dt\right)\frac{\pi}{a}e^{-2v(z_0)}B_{\Omega,\rho_{z_0}}^{(k)}(z_0)
\end{equation}
holds for any $z_0\in\Omega$ satisfying  $v(z_0)>-\infty$ if and only if the following two statements hold:

$(1)$ $\Omega$ is conformally equivalent to the unit disc less a (possible) closed set of inner capacity zero;

$(2)$ $v=\log|g|$, where $g$ is a holomorphic function on $\Omega$ which is not identically equal to $0$.
\end{Theorem}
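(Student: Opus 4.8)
The easy direction is essentially recorded in the remark preceding the statement: if (1) and (2) hold, then $\Omega$ disc-like gives $\chi_{z_0}\equiv 1$, and taking $u\equiv 0$ in $v=\log|g|$ gives $\chi_{-u}=1=(\chi_{z_0})^{k+1}$; since $v(z_0)=\log|g(z_0)|>-\infty$ forces $g(z_0)\neq 0$, both hypotheses of Theorem \ref{thm3} hold at every admissible $z_0$, so \eqref{eq:221129a} follows. Thus the content is the necessity, and the plan is to run Theorem \ref{thm3} backwards pointwise and then let $z_0$ vary.

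Assume \eqref{eq:221129a} holds for every $z_0$ with $v(z_0)>-\infty$. Since $v\not\equiv-\infty$, these points form the complement $\Omega'$ of a polar set, which is dense and leaves $\Omega$ connected. First I would apply part (1) of Theorem \ref{thm3} at one base point of $\Omega'$ to obtain a \emph{global} decomposition $v=\log|g_0|+u$ with $g_0\in\mathcal O(\Omega)$ and $u$ harmonic on $\Omega$. Because $u$ is finite everywhere while $v(z_0)>-\infty$, we have $g_0(z_0)\neq 0$ for every $z_0\in\Omega'$, so this one fixed decomposition already witnesses part (1) at each such $z_0$. Part (2) of Theorem \ref{thm3}, applied at every $z_0\in\Omega'$, then yields the single identity
\begin{equation*}
(\chi_{z_0})^{k+1}=\chi_{-u}\qquad\text{for all }z_0\in\Omega',
\end{equation*}
in which the right-hand side is a fixed character (an invariant of $v$, since two admissible decompositions differ by $\log|h|$ with $h$ nonvanishing holomorphic, hence have the same $\chi_{-u}$ by the relation $u_1-u_2=\log|f|\Rightarrow\chi_{u_1}=\chi_{u_2}$), while the left-hand side a priori moves with $z_0$.

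The point is to exploit this rigidity. For each $\gamma\in\pi_1(\Omega)$ I would write $\chi_{z_0}(\gamma)=\exp\big(2\pi i\,u_\gamma(z_0)\big)$, where $u_\gamma(z_0):=\frac{1}{2\pi}\oint_\gamma \ast_z dG_\Omega(z,z_0)$ is the conjugate-period function of the Green function (a possibly multi-valued harmonic function of $z_0$, matching the ``integer valued points of a class of harmonic functions'' in the abstract), and whose differential $\theta_\gamma:=d_{z_0}u_\gamma$ is a genuine harmonic $1$-form on $\Omega$. The displayed identity says $(k+1)u_\gamma(z_0)\equiv c_\gamma\ (\mathrm{mod}\ \mathbb Z)$ on $\Omega'$ for a constant $c_\gamma$; since a continuous function that is integer-valued on a dense set is integer-valued (a convergent sequence of integers is eventually constant), the $\mathbb R/\mathbb Z$-valued continuous function $(k+1)u_\gamma-c_\gamma$ is locally constant on the connected $\Omega$, whence $\theta_\gamma=d_{z_0}u_\gamma\equiv 0$ for every $\gamma$.

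The main obstacle is the last implication: the vanishing of all conjugate-period differentials must force $\Omega$ to be disc-like. Here I would invoke the classical identification of the $\theta_\gamma$ (equivalently the $u_\gamma$) with the harmonic-measure differentials of the ideal boundary of $\Omega$; the nondegeneracy of the period pairing then shows that $\theta_\gamma\equiv 0$ for all $\gamma$ is equivalent to $\chi_{z_0}\equiv 1$, i.e. to $\Omega$ being conformally the unit disc less a closed set of inner capacity zero (the characterization recalled in the introduction, \cite{Suita}). This gives (1). Finally, feeding $\chi_{z_0}\equiv 1$ back into the displayed identity gives $\chi_{-u}=1$, so the multiplicative function with modulus $p^{\ast}(e^{-u})$ descends to a single-valued nonvanishing $H\in\mathcal O(\Omega)$; then $u=\log|1/H|$ and $v=\log|g_0/H|=:\log|g|$ with $g\in\mathcal O(\Omega)$ and $g\not\equiv 0$ (as $v\not\equiv-\infty$), which is (2).
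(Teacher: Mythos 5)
Your overall strategy is the same as the paper's: apply Theorem \ref{thm3} pointwise to get one fixed decomposition $v=\log|g_0|+u$ and the identity $(\chi_{z_0})^{k+1}=\chi_{-u}$ for all $z_0$ outside the zero set of $g_0$, translate the characters into conjugate periods of the Green function via Proposition \ref{thm1}, and deduce from the integrality constraint on a dense connected set that each period function $u_\gamma(z_0)=\frac{1}{2\pi}\int_\gamma\widetilde{d}G_\Omega(\cdot,z_0)$ is locally constant in $z_0$. Up to that point the argument is sound (the paper spends Lemma \ref{lemma14} of the Appendix making the multi-valued harmonic function $u_\gamma$ precise as a single-valued harmonic function on the universal cover; you take this for granted, but it is a standard point).

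The gap is the final implication. Knowing $d_{z_0}u_\gamma\equiv 0$ for every $\gamma$ only tells you that each $u_\gamma$ is constant, i.e.\ that $\chi_{z_0}(\gamma)=e^{2\pi i c_\gamma}$ is \emph{independent of} $z_0$; it does not tell you $c_\gamma\in\mathbb{Z}$, which is what $\chi_{z_0}\equiv 1$ requires. The appeal to ``nondegeneracy of the period pairing'' does not address this: such nondegeneracy statements compare cycles against differentials and cannot detect the value of a locally constant function, and the identification with harmonic-measure differentials of the ideal boundary is left unproved precisely in the generality needed (an arbitrary open Riemann surface admitting a Green function). The paper closes exactly this gap with a concrete boundary-limit argument: fixing $z'\in\gamma$ and using $\sup_\Omega G_\Omega(z',\cdot)=0$, it chooses a sequence $z_k$ with $G_\Omega(z',z_k)\to 0$, upgrades this to uniform convergence $G_\Omega(\cdot,z_k)\to 0$ on a neighborhood of $\gamma$ by Harnack's inequality (Lemma \ref{lemma3}), and then uses interior derivative estimates (Lemma \ref{lemma4}) to conclude $\int_\gamma\widetilde{d}G_\Omega(\cdot,z_k)\to 0$ (equality \eqref{1109c}); since this period takes values in the discrete set $2\pi c_\gamma+2\pi\mathbb{Z}$, the limit forces $c_\gamma\in\mathbb{Z}$, hence $\chi_{z_0}\equiv 1$ and, by Suita's characterization, statement $(1)$. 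Your subsequent deduction of $(2)$ from $\chi_{-u}=1$ is fine. To repair the proof you need either this limit argument or a precisely cited classical theorem, valid for arbitrary open Riemann surfaces with Green function, showing that constant conjugate periods of the Green function must be integer multiples of $2\pi$.
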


The following theorem gives a relation between  the set of points such that equality \eqref{eq:221129a} holds and the integer valued points of a class of harmonic functions (maybe multi-valued), which shows that the set of points where equality \eqref{eq:221129a} holds is a real analytic subset of $\Omega$ when one of the two statements in Theorem \ref{thm4} doesn't hold.
Let $\Pi_1\subset\pi_1(\Omega)$ such that $\pi_1(\Omega)$ is generated by $\Pi_1$.

\begin{Theorem}
	\label{remark6}Assume that $v=\log|\tilde g|+u$, where  $\tilde g\not\equiv0$ is a holomorphic function on $\Omega$ and $u$ is a harmonic function on $\Omega$. There exists a class of harmonic functions $\{u_{\alpha}\}_{\alpha\in\Pi_1}$ on $\Delta$, such that $\beta^*u_{\alpha}-u_{\alpha}$ is an integer constant function for any $\beta\in\pi_1(\Omega)$ and any $\alpha\in\Pi_1$. Equality
	\begin{equation}
		\label{eq:1118c}(c_{\beta}(z_0))^{2(k+1)}=\left(\int_0^{+\infty}c(t)e^{-t}dt\right)\frac{\pi}{a}e^{-2v(z_0)}B_{\Omega,\rho_{z_0}}^{(k)}(z_0)
	\end{equation}
holds for $z_0\in\Omega$ with $v(z_0)>-\infty$ if and only if  $u_{\alpha}(\tilde z_0)\in\mathbb{Z}$ for any $\alpha\in\Pi_1$ and $\tilde z_0\in p^{-1}(z_0)$.

Moreover,  $\{u_{\alpha}\}_{\alpha\in\Pi_1}$ are integer constant functions if and only if $\Omega$ is conformally equivalent to the unit disc less a (possible) 
closed set of inner capacity zero and $v=\log|g|$, where $g\not\equiv0$ is a holomorphic function on $\Omega$.
\end{Theorem}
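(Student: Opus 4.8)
The plan is to convert the analytic equality into the character identity $\chi_{-u}=(\chi_{z_0})^{k+1}$ using Theorem \ref{thm3}, and then to encode that identity as the integrality of a family of harmonic functions built from the conjugate periods of the Green function. First I would note that, since $v=\log|\tilde g|+u$, statement $(1)$ of Theorem \ref{thm3} holds automatically at every $z_0$ with $v(z_0)>-\infty$: there $\log|\tilde g|(z_0)=v(z_0)-u(z_0)>-\infty$, so $\tilde g(z_0)\neq0$ and one may take $g=\tilde g$. I would also record that $\chi_{-u}$ is independent of the decomposition: if $v=\log|g_1|+u_1=\log|g_2|+u_2$ with $u_1,u_2$ harmonic (hence everywhere finite), then $\log|g_2/g_1|=u_1-u_2$ is finite and harmonic on all of $\Omega$, forcing $g_2/g_1$ to be holomorphic and zero-free, so the fact recalled in the introduction gives $\chi_{-u_1}=\chi_{-u_2}$. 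Hence, for $z_0$ with $v(z_0)>-\infty$, equality \eqref{eq:1118c} is equivalent to $\chi_{-u}=(\chi_{z_0})^{k+1}$, and because $\Pi_1$ generates $\pi_1(\Omega)$ and characters are homomorphisms, this is in turn equivalent to $\chi_{-u}(\alpha)=(\chi_{z_0}(\alpha))^{k+1}$ for every $\alpha\in\Pi_1$.

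Next I would build $u_\alpha$. Recalling that $\chi_{z_0}(\alpha)=e^{ic_{z_0}(\alpha)}$, where $c_{z_0}(\alpha)=\oint_\alpha{}^*d_zG_\Omega(z,z_0)$ is the period around $\alpha$ of the harmonic conjugate of $G_\Omega(\cdot,z_0)$, I would check that $z_0\mapsto\chi_{z_0}(\alpha)\in U(1)$ admits locally a harmonic argument: differentiating under the period integral, $c_{z_0}(\alpha)$ is harmonic in $z_0$ off $\alpha$ (because $G_\Omega$ is harmonic in its second variable off the diagonal), while across $\alpha$ the period jumps only by $2\pi\mathbb{Z}$, so $e^{ic_{z_0}(\alpha)}$ is continuous. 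Since $\Delta$ is simply connected, the pulled-back map $\tilde z_0\mapsto\chi_{p(\tilde z_0)}(\alpha)$ has a single-valued, continuous, hence harmonic argument, yielding a harmonic $\Theta_\alpha$ on $\Delta$ with $e^{2\pi i\Theta_\alpha(\tilde z_0)}=\chi_{p(\tilde z_0)}(\alpha)$. As $\chi_{z_0}(\alpha)$ depends only on $z_0=p(\tilde z_0)$, the function $\beta^*\Theta_\alpha-\Theta_\alpha$ is $\mathbb{Z}$-valued and harmonic, hence a constant integer. Fixing $\phi_\alpha\in\mathbb{R}$ with $e^{i\phi_\alpha}=\chi_{-u}(\alpha)$, I set
$$u_\alpha:=(k+1)\Theta_\alpha-\frac{\phi_\alpha}{2\pi},$$
which is harmonic on $\Delta$ and satisfies $\beta^*u_\alpha-u_\alpha=(k+1)(\beta^*\Theta_\alpha-\Theta_\alpha)\in\mathbb{Z}$ (a constant), as required.

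With this in hand the equivalence is formal: $u_\alpha(\tilde z_0)\in\mathbb{Z}$ means $(k+1)\,2\pi\Theta_\alpha(\tilde z_0)\equiv\phi_\alpha\ (\mathrm{mod}\ 2\pi)$, i.e. $(\chi_{z_0}(\alpha))^{k+1}=\chi_{-u}(\alpha)$; and membership in $\mathbb{Z}$ is independent of the choice of $\tilde z_0\in p^{-1}(z_0)$ (the values differ by the integer $\beta^*u_\alpha-u_\alpha$) and of $\phi_\alpha$. Combining with the reduction of the first paragraph, equality \eqref{eq:1118c} holds at a point $z_0$ with $v(z_0)>-\infty$ if and only if $u_\alpha(\tilde z_0)\in\mathbb{Z}$ for every $\alpha\in\Pi_1$ and every $\tilde z_0\in p^{-1}(z_0)$.

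For the final assertion, if every $u_\alpha$ is an integer constant then $u_\alpha(\tilde z_0)\in\mathbb{Z}$ for all $z_0$ and $\alpha$, so equality \eqref{eq:1118c} holds at every $z_0$ with $v(z_0)>-\infty$, and Theorem \ref{thm4} yields that $\Omega$ is conformally the unit disc less a closed set of inner capacity zero and $v=\log|g|$. Conversely, these two conditions give $\chi_{z_0}\equiv1$ for all $z_0$, whence $e^{2\pi i\Theta_\alpha}\equiv1$ and $\Theta_\alpha$ is an integer constant, while $v=\log|g|$ (using the decomposition with $u=0$ and the decomposition-independence of $\chi_{-u}$) gives $\chi_{-u}\equiv1$, so $\phi_\alpha\in2\pi\mathbb{Z}$ and $u_\alpha=(k+1)\Theta_\alpha-\phi_\alpha/2\pi$ is an integer constant. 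I expect the main obstacle to be the construction of $\Theta_\alpha$ in the second paragraph, namely proving that the Green-function character $\chi_{z_0}(\alpha)$ depends harmonically on the pole and pins down a harmonic function on $\Delta$ with integer deck-monodromy; once this dictionary between characters and integer-period harmonic functions is established, the rest follows formally from Theorems \ref{thm3} and \ref{thm4}.
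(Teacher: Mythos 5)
Your proposal is correct and reaches the same dictionary as the paper (equality $\Leftrightarrow$ $\chi_{-u}=(\chi_{z_0})^{k+1}$ via Theorem \ref{thm3}, then characters $\Leftrightarrow$ integrality of harmonic functions on $\Delta$ via the period formula of Proposition \ref{thm1}), and your $u_\alpha=(k+1)\Theta_\alpha-\phi_\alpha/2\pi$ agrees with the paper's $u_{\alpha}=\frac{1}{2\pi}\bigl((k+1)\tilde u_{\alpha}+\int_{\gamma_{\alpha}}\widetilde{d}u\bigr)$ up to an additive integer, which is immaterial for the statement. The genuine difference is in how the harmonic function on $\Delta$ is produced. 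The paper proves this as Lemma \ref{lemma14} in the Appendix by an explicit geometric induction on the number of self-intersection points of $\gamma_\alpha$, separately treating the cases where $\Omega\backslash\gamma$ is or is not connected and tracking the $2\pi$ jumps across the curve by hand. You instead observe that $z_0\mapsto\chi_{z_0}(\alpha)$ is a globally well-defined continuous $U(1)$-valued function that locally admits a harmonic argument (because near any $z_1$ one may compute the character with a representative curve avoiding a neighborhood of $z_1$, where the period integral is harmonic in the pole by the smoothness of $G_\Omega$ off the diagonal, i.e.\ Lemmas \ref{lemma12} and \ref{lemma9}), and then lift it through the covering $\mathbb{R}\to U(1)$ using the simple connectivity of $\Delta$; the deck-monodromy being a constant integer is then automatic since the character depends only on $p(\tilde z_0)$. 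This is shorter and more conceptual, at the price of leaning on the curve-independence built into Proposition \ref{thm1} rather than exhibiting the jump structure explicitly; the paper's Lemma \ref{lemma14} yields as a by-product the finer information recorded in Remark \ref{remark:example} about when $p_*u_\alpha$ descends to $\Omega$, which your construction does not directly give. Two small merits of your write-up worth keeping: the observation that condition $(1)$ of Theorem \ref{thm3} holds automatically at every $z_0$ with $v(z_0)>-\infty$ under the hypothesis $v=\log|\tilde g|+u$, and the verification that $\chi_{-u}$ does not depend on the chosen decomposition of $v$ --- both are used implicitly but not spelled out in the paper.
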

\begin{Remark}
	In fact, $u_{\alpha}(z)$ is locally equal to $\frac{1}{2\pi}\left((k+1)\int_{\gamma_{\alpha}}\widetilde{d}G_{\Omega}(\cdot,p(z))+\int_{\gamma_{\alpha}}\widetilde{d}u\right)+m$ (see equality \eqref{eq:1118f}), where $\gamma_{\alpha}$ is a piecewise smooth closed curve on $\Omega$ satisfying $[\gamma_{\alpha}]=\alpha$, $\tilde{d}=\frac{\partial-\bar\partial}{i}$ and $m\in\mathbb{Z}$. Remark \ref{remark:example} in Appendix shows that $p_*u_{\alpha}$ is a single-valued function when $\Omega$ is a planar domain, but there exists an open Riemann surface such that $p_*u_{\alpha}$ may not be a single-valued function.
\end{Remark}

When $\rho_{z_0}\equiv1$ (i.e., $v\equiv0$, $a=k+1$ and $c\equiv1$), denote $B_{\Omega,\rho_{z_0}}^{(k)}(z_0)$ by $B_{\Omega}^{(k)}(z_0)$, which is the Bergman kernel for higher derivatives with trivial weight (see \cite{Berg70}). Theorem \ref{remark6} implies the following corollary.

\begin{Corollary}
There exists a class of harmonic functions $\{u_{\alpha}\}_{\alpha\in\Pi_1}$ on $\Delta$, such that $\beta^*u_{\alpha}-u_{\alpha}$ is an integer constant function for any $\beta\in\pi_1(\Omega)$ and $\alpha\in\Pi_1$. For any $z_0\in\Omega$, equality
$$(c_{\beta}(z_0))^{2(k+1)}=\frac{\pi}{k+1}B_{\Omega}^{(k)}(z_0)$$
holds if and only if  $u_{\alpha}(z_0)\in\mathbb{Z}$ for any $\alpha\in\Pi_1$.

Moreover,  $\{u_{\alpha}\}$ are integer constant functions if and only if $\Omega$ is conformally equivalent to the unit disc less a (possible) 
closed set of inner capacity zero.
\end{Corollary}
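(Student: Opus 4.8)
The plan is to derive the Corollary as the special case of Theorem \ref{remark6} corresponding to the trivial weight $\rho_{z_0}\equiv1$, namely $v\equiv0$, $a=k+1$ and $c\equiv1$. First I would check that these choices are admissible. The function $c\equiv1$ is positive and measurable, $c(t)e^{-t}=e^{-t}$ is decreasing on $(0,+\infty)$, and $\int_0^{+\infty}c(t)e^{-t}\,dt=\int_0^{+\infty}e^{-t}\,dt=1<+\infty$, so the standing assumptions on $c$ hold. Moreover $v\equiv0$ is (trivially) subharmonic, and writing $v=\log|\tilde g|+u$ with $\tilde g\equiv1$ and $u\equiv0$ places us exactly in the hypothesis of Theorem \ref{remark6}, with $\tilde g\not\equiv0$ holomorphic and $u$ harmonic. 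In particular $v(z_0)=0>-\infty$ for every $z_0\in\Omega$, so the qualifier ``$v(z_0)>-\infty$'' in Theorem \ref{remark6} is vacuous here and the equality is tested at every point of $\Omega$, matching the ``for any $z_0\in\Omega$'' of the Corollary.

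Next I would verify that the weight and the constants collapse as claimed. Substituting $a=k+1$ into $\rho_{z_0}=e^{-(2(k+1-a)G_\Omega(\cdot,z_0)+2v)}c(-2aG_\Omega(\cdot,z_0))$ kills the Green-function factor, since $k+1-a=0$, and leaves $e^{-2v}\cdot c(\cdots)=e^{0}\cdot1=1$; hence $\rho_{z_0}\equiv1$ and $B_{\Omega,\rho_{z_0}}^{(k)}(z_0)=B_{\Omega}^{(k)}(z_0)$. Feeding $\int_0^{+\infty}c(t)e^{-t}\,dt=1$, $a=k+1$ and $e^{-2v(z_0)}=1$ into the right-hand side of equality \eqref{eq:1118c} turns it into $\frac{\pi}{k+1}B_{\Omega}^{(k)}(z_0)$, which is exactly the equality appearing in the Corollary. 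Thus the equivalence furnished by Theorem \ref{remark6} reads: this equality holds at $z_0$ if and only if $u_\alpha(\tilde z_0)\in\mathbb{Z}$ for all $\alpha\in\Pi_1$ and all $\tilde z_0\in p^{-1}(z_0)$, where, by the Remark, $u_\alpha$ is locally equal to $\frac{k+1}{2\pi}\int_{\gamma_\alpha}\widetilde{d}G_\Omega(\cdot,p(z))+m$, the term $\int_{\gamma_\alpha}\widetilde{d}u$ dropping out because $u\equiv0$.

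The one point deserving a short argument is the passage from ``$u_\alpha(\tilde z_0)\in\mathbb{Z}$ for all $\tilde z_0\in p^{-1}(z_0)$'' to the single-point phrasing ``$u_\alpha(z_0)\in\mathbb{Z}$'' used in the Corollary. Since $\beta^*u_\alpha-u_\alpha$ is an integer constant for every $\beta\in\pi_1(\Omega)$, and any two preimages of $z_0$ differ by a deck transformation $\beta$, the value $u_\alpha(\tilde z_0)$ is well-defined modulo $\mathbb{Z}$ independently of the chosen preimage; hence the integrality condition descends to a condition on $z_0\in\Omega$ and may be written $u_\alpha(z_0)\in\mathbb{Z}$. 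Finally, for the ``Moreover'' clause I would specialize the corresponding statement of Theorem \ref{remark6}: there, $\{u_\alpha\}$ being integer constants is equivalent to $\Omega$ being conformally equivalent to the unit disc less a closed set of inner capacity zero \emph{together with} $v=\log|g|$ for some holomorphic $g\not\equiv0$. With $v\equiv0=\log|1|$ the second condition is automatically met, so it disappears and only the first survives, yielding the asserted equivalence. No genuine obstacle arises beyond these bookkeeping verifications; the entire content is inherited from Theorem \ref{remark6}, and the only step requiring a thought is the descent of the integrality condition to $\Omega$ described above.
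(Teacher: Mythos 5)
Your proposal is correct and matches the paper's intended derivation exactly: the paper gives no separate proof, simply noting that the Corollary is the special case of Theorem \ref{remark6} with $v\equiv0$, $a=k+1$, $c\equiv1$, which is precisely the specialization you carry out (including the useful observation that the integrality condition descends from $p^{-1}(z_0)$ to $z_0$ because $\beta^*u_\alpha-u_\alpha\in\mathbb{Z}$).
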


In the following, we assume that
 $\Omega$ is a bounded connected open subset of $\mathbb{C}$, which is bounded by $n$ analytic closed curves. 
 
 Denote that $\mathbb{C}\backslash \Omega=\bigcup_{i=1}^{n}A_j$, where $A_j$ are disjoint closed sets. 
Without loss of generality, suppose that $A_n$ is the unbounded  connected component of $\mathbb{C}\backslash\Omega$. 
Denote the boundary of $A_j$ by $\Gamma_j$, $1\le j\le n$.
For any $1\le j\le n-1$, let $\gamma_j$ be a closed smooth curve in $\Omega$, which winds around points in $A_j$ once and does not wind around points in $A_l$ for $l\neq j$ (for definition, see Lemma \ref{lemma8} and Remark \ref{remark5}).

For $1\le j\le n-1$, by solving Dirichlet problem (see Remark \ref{remark4}), there exists $u_j\in C(\overline{\Omega})$, which satisfies that $u_j=1$ on $\Gamma_j$, $u_j(z)=0$ on $\Gamma_l$ for any $l\neq j$ and $u_j$ is harmonic on $\Omega$.

The following theorem gives a relation between Dirichlet problem and the set of points such that equality \eqref{eq:1118c} holds.  

\begin{Theorem}
\label{thm5}
Equality
$$(c_{\beta}(z_0))^{2(k+1)}=\left(\int_0^{+\infty}c(t)e^{-t}dt\right)\frac{\pi }{a}e^{-2v(z_0)}B_{\Omega,\rho_{z_0}}^{(k)}(z_0)$$
holds for  $z_0\in\Omega$ with $v(z_0)>-\infty$ 
if and only if the following two statements hold:

$(1)$ $v=\log|g|+u$, where $g$ is a holomorphic function on $\Omega$ satisfying $g(z_0)\not=0$ and $u$ is a harmonic function on $\Omega$;

$(2)$ $(k+1)u_j(z_0)+\frac{1}{2\pi}\int_{\gamma_j}\frac{\partial u(z)}{\partial n}ds(z)\in \mathbb{Z}$ holds for any $1\le j\le n-1$, where $\frac{\partial}{\partial n}$ means differentiation in the direction of the outward pointing normal, and $s$ is the arc-length parameter.
\end{Theorem}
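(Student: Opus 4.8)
The plan is to specialize Theorems \ref{thm3} and \ref{remark6} to the planar setting and then to convert the resulting period condition into Dirichlet data by means of Green's identity. First observe that statement $(1)$ is verbatim condition $(1)$ of Theorem \ref{thm3}. If $v$ admits no decomposition $v=\log|g|+u$ with $g$ holomorphic and $u$ harmonic on $\Omega$, then by Theorem \ref{thm3} equality \eqref{eq:1118c} fails at $z_0$, while $(1)$ fails as well, so the asserted equivalence holds trivially at such $z_0$. Hence I may assume $v=\log|\tilde g|+u$ as in Theorem \ref{remark6} (the hypothesis $v(z_0)>-\infty$ forces $\tilde g(z_0)\neq0$, which is the remaining content of $(1)$), and it remains only to identify condition $(2)$ with the integrality condition furnished by Theorem \ref{remark6}.

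Since $\Omega$ is a bounded planar domain whose complement has the $n$ components $A_1,\dots,A_n$, the group $\pi_1(\Omega)$, through which the characters factor, is generated by the classes $[\gamma_1],\dots,[\gamma_{n-1}]$, where $\gamma_j$ winds once around $A_j$. I would therefore take $\Pi_1=\{[\gamma_1],\dots,[\gamma_{n-1}]\}$ in Theorem \ref{remark6}. By that theorem, equality \eqref{eq:1118c} holds at $z_0$ if and only if $u_{[\gamma_j]}(\tilde z_0)\in\mathbb{Z}$ for all $1\le j\le n-1$ and $\tilde z_0\in p^{-1}(z_0)$ (the value being independent of the chosen lift, since $\beta^*u_\alpha-u_\alpha$ is an integer); and by the formula for $u_\alpha$ recorded in the Remark following Theorem \ref{remark6}, this is equivalent to
$$\frac{1}{2\pi}\Big((k+1)\int_{\gamma_j}\widetilde{d}G_\Omega(\cdot,z_0)+\int_{\gamma_j}\widetilde{d}u\Big)\in\mathbb{Z},\qquad 1\le j\le n-1,$$
the integer $m$ in that formula being absorbed into the right-hand side.

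It then remains to evaluate the two conjugate periods along $\gamma_j$ in terms of Dirichlet data. Using $\widetilde{d}=*d$ and fixing the orientation of $\gamma_j$ (winding positively around $A_j$) together with the outward normal to $\Omega$, the second period is $\int_{\gamma_j}\widetilde{d}u=-\int_{\gamma_j}\frac{\partial u}{\partial n}\,ds$. For the first, I would deform $\gamma_j$ onto $\Gamma_j$, which is legitimate because $\widetilde{d}G_\Omega(\cdot,z_0)$ is closed on $\Omega\setminus\{z_0\}$ and $\gamma_j$ does not wind around $z_0$, and then apply Green's second identity to the pair $\big(G_\Omega(\cdot,z_0),u_j\big)$ on $\Omega$ with a small disc about $z_0$ deleted: both functions are harmonic there, $G_\Omega(\cdot,z_0)$ vanishes on $\partial\Omega$ while $u_j$ equals $1$ on $\Gamma_j$ and $0$ on the remaining boundary curves, and the $\log|w|$–singularity of $G_\Omega(\cdot,z_0)$ contributes the residue $2\pi$. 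This yields $\int_{\gamma_j}\widetilde{d}G_\Omega(\cdot,z_0)=-2\pi\,u_j(z_0)$ with the same normal. Substituting both evaluations, the displayed integrality becomes
$$(k+1)u_j(z_0)+\frac{1}{2\pi}\int_{\gamma_j}\frac{\partial u(z)}{\partial n}\,ds(z)\in\mathbb{Z},\qquad 1\le j\le n-1,$$
which is precisely statement $(2)$.

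The step I expect to be the main obstacle is this last evaluation: one must fix compatibly the winding orientation of $\gamma_j$ around $A_j$, the direction of the outward normal to $\Omega$ along $\Gamma_j$, and the sign convention in $\widetilde{d}=*d$, so that the two periods enter the combination with the matching relative sign exhibited in $(2)$; the overall sign is immaterial, since reversing $\gamma_j$ negates both periods simultaneously and $x\in\mathbb{Z}\iff-x\in\mathbb{Z}$. One must also justify the boundary integrals and the deformation of $\gamma_j$ onto $\Gamma_j$, and this is exactly where the hypothesis that $\partial\Omega$ consists of finitely many analytic closed curves is used: it guarantees that $G_\Omega(\cdot,z_0)$ and $u_j$ extend harmonically across each $\Gamma_j$, so that $\partial/\partial n$ is well defined there and the flux computation is valid. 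Everything else is a direct specialization of Theorems \ref{thm3} and \ref{remark6} to the planar case.
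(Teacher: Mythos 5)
Your proposal is correct and follows essentially the same route as the paper: reduce via Theorem \ref{thm3} to the character condition $(\chi_{z_0})^{k+1}=\chi_{-u}$, express that condition as integrality of the conjugate periods of $G_\Omega(\cdot,z_0)$ and $u$ along the homology basis $\gamma_1,\dots,\gamma_{n-1}$, and identify $\frac{1}{2\pi}\int_{\gamma_j}\frac{\partial G_\Omega(\cdot,z_0)}{\partial n}\,ds$ with $u_j(z_0)$ by a Green identity (the paper quotes Green's third formula, Lemma \ref{lemma1}, which is exactly your residue computation). The only organizational difference is that you route the character-to-period step through Theorem \ref{remark6}, whereas the paper uses the planar-specific Lemma \ref{lemma2} built directly on Proposition \ref{thm1}; your handling of the orientation/sign ambiguity is also consistent with the paper's convention $\widetilde{d}v=\frac{\partial v}{\partial n}\,ds$ from Lemma \ref{l:1}.
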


\begin{Remark}
	We call a domain $\Omega\subset\mathbb{C}$ is $n$-connected, if $\mathbb{C}\backslash\Omega=\cup_{j=1}^nA_j$, where $\{A_j\}$ are disjoint connected closed sets.
	By Lemma \ref{lemma7}, we know that Theorem \ref{thm5} also holds
	when $\Omega$ is a bounded $n$-connected open subset of $\mathbb{C}$ such that no connected component of $\mathbb{C}\backslash \Omega$ is an isolated point.
\end{Remark}

\begin{Remark}
\label{remark2}
We give an example of Theorem \ref{thm5}. Suppose that $\Omega=\{z\in\mathbb{C}:1<|z|<R\}$, and $v$ is a harmonic function on $\Omega$. Denote that 
$$c:=\int_{\{|z|=\frac{R+1}{2}\}}\frac{\partial v(z)}{\partial n}ds(z)$$
is a constant. Let $u_1=\frac{\log R-\log|z|}{\log R}$. It is clear that $u_1$ is harmonic on $\mathbb{C}\backslash\{0\}$, $u_1(z)=1$ when $|z|=1$ and $u_1(z)=0$ when $|z|=R$. Theorem \ref{thm5} shows that the set of points $z_0\in\Omega$ satisfying 
$$(c_{\beta}(z_0))^{2(k+1)}=\left(\int_0^{+\infty}c(t)e^{-t}dt\right)\frac{\pi }{a}e^{-2v(z_0)}B_{\Omega,\rho_{z_0}}^{(k)}(z_0)$$ 
is just $$\left\{z_0\in\Omega:|z_0|=R^{\frac{c+m}{k+1}} \text{ for some $m\in \mathbb{Z}$}\right\}.$$ 

Suppose that $\Omega$ is a doubly-connected open subset of $\mathbb{C}$ and no connected component of $\mathbb{C}\backslash\Omega$ is an isolated point. Let $v$ be a harmonic function on $\Omega$.
Note that $\Omega$ can be mapped conformally onto a circular ring (see \cite{nehari}), then the set of points $z_0\in\Omega$ satisfying 
$$(c_{\beta}(z_0))^{2(k+1)}=\left(\int_0^{+\infty}c(t)e^{-t}dt\right)\frac{\pi }{a}e^{-2v(z_0)}B_{\Omega,\rho_{z_0}}^{(k)}(z_0)$$ 
is constituted by $k$ real analytic closed curves.
\end{Remark}

When $\rho_{z_0}\equiv1$, Theorem \ref{thm5} implies the following corollary.
\begin{Corollary}
Equality
$$(c_{\beta}(z_0))^{2(k+1)}=\frac{\pi}{k+1}B_{\Omega}^{(k)}(z_0)$$
holds for  $z_0\in\Omega$
if and only if 
 $(k+1)u_j(z_0)\in \mathbb{Z}$  for any $1\le j\le n-1$.
\end{Corollary}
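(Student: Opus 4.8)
The plan is to obtain the corollary as the specialization of Theorem~\ref{thm5} to the trivial weight $\rho_{z_0}\equiv 1$, that is, to the parameters $v\equiv 0$, $a=k+1$ and $c\equiv 1$. First I would record that under this choice every quantity in Theorem~\ref{thm5} collapses: $\int_0^{+\infty}c(t)e^{-t}\,dt=\int_0^{+\infty}e^{-t}\,dt=1$, $\frac{\pi}{a}=\frac{\pi}{k+1}$, $e^{-2v(z_0)}=1$, and a direct substitution into $\rho_{z_0}=e^{-(2(k+1-a)G_{\Omega}(\cdot,z_0)+2v)}c(-2aG_{\Omega}(\cdot,z_0))$ gives $\rho_{z_0}\equiv 1$, whence $B_{\Omega,\rho_{z_0}}^{(k)}(z_0)=B_{\Omega}^{(k)}(z_0)$. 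Thus the right-hand side of the equality in Theorem~\ref{thm5} becomes exactly $\frac{\pi}{k+1}B_{\Omega}^{(k)}(z_0)$. Moreover $v(z_0)=0>-\infty$ for every $z_0\in\Omega$, so Theorem~\ref{thm5} applies at every point of $\Omega$ without restriction, matching the statement of the corollary.

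It then remains to simplify conditions $(1)$ and $(2)$. Since $v\equiv 0$ is itself harmonic, it admits the decomposition $v=\log|g|+u$ with $g\equiv 1$ (holomorphic and nowhere vanishing, in particular $g(z_0)\neq 0$) and $u\equiv 0$, so condition $(1)$ is automatically satisfied. For this choice $\frac{\partial u}{\partial n}\equiv 0$, the integral term in $(2)$ vanishes, and condition $(2)$ reads precisely $(k+1)u_j(z_0)\in\mathbb{Z}$ for all $1\le j\le n-1$. Hence, if $(k+1)u_j(z_0)\in\mathbb{Z}$ for every $j$, the pair $(g,u)=(1,0)$ realizes both $(1)$ and $(2)$, and Theorem~\ref{thm5} yields the equality.

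For the converse I would show that whether $(2)$ holds is independent of the chosen decomposition, so that the witness $(1,0)$ suffices to test it. If $v=\log|g|+u$ is any admissible decomposition of $v\equiv 0$, then $u=-\log|g|$, and for each $j$ one has $\frac{1}{2\pi}\int_{\gamma_j}\frac{\partial u}{\partial n}\,ds=-\frac{1}{2\pi}\int_{\gamma_j}\frac{\partial \log|g|}{\partial n}\,ds$, which by the argument principle equals (up to sign) the winding number of $g$ along $\gamma_j$ and is therefore an integer. Consequently $(k+1)u_j(z_0)+\frac{1}{2\pi}\int_{\gamma_j}\frac{\partial u}{\partial n}\,ds\in\mathbb{Z}$ if and only if $(k+1)u_j(z_0)\in\mathbb{Z}$, independently of $g$. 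Thus if the equality holds, Theorem~\ref{thm5} furnishes some admissible $(g,u)$ satisfying $(2)$, and the integrality just established forces $(k+1)u_j(z_0)\in\mathbb{Z}$ for every $j$. Combining the two directions gives the corollary. The only genuine point requiring care is this converse step, where the decomposition of $v$ is not unique; the argument principle is what guarantees that the ambiguity in $u$ contributes only integers to the flux and hence does not affect condition $(2)$.
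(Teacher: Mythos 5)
Your proposal is correct and follows the same route as the paper, which simply states that the corollary is the specialization of Theorem~\ref{thm5} to $v\equiv 0$, $a=k+1$, $c\equiv 1$ without further argument. Your extra step checking that the flux term $\frac{1}{2\pi}\int_{\gamma_j}\frac{\partial u}{\partial n}\,ds$ is an integer for \emph{any} admissible decomposition $0=\log|g|+u$ is a legitimate point the paper leaves implicit (there it is absorbed by the fact that $\chi_{u_1}=\chi_{u_2}$ whenever $u_1-u_2=\log|f|$, so condition $(2)$ is independent of the decomposition), and your argument-principle justification of it is sound.
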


Under the assumption that $\Omega$ is a connected open subset of $\mathbb{C}$ bounded by $n$ analytic closed curves, the equality part of Suita conjecture is equivalent to that  ``$\pi B_{\Omega}(z_0)= (c_{\beta}(z_0))^2$ can not hold for any $z_0\in\Omega$ unless $n=1$". We generalize it to any nonnegative integer $k$ in the following corollary.
\begin{Corollary}
\label{coro1}
Assume that $v\equiv0$ on $\Omega$, then 
$$(c_{\beta}(z_0))^{2(k+1)}=\left(\int_0^{+\infty}c(t)e^{-t}dt\right)\frac{\pi }{a}B_{\Omega,\rho_{z_0}}^{(k)}(z_0)$$ 
can not hold for any $z_0\in\Omega$ unless $k\ge n-1$.
\end{Corollary}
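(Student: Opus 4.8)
The plan is to obtain Corollary \ref{coro1} directly from Theorem \ref{thm5} by a short counting argument on harmonic measures. With $v\equiv 0$ I first reduce condition $(2)$ of Theorem \ref{thm5} to integer conditions on the $u_j$. Any admissible decomposition $v=\log|g|+u$ with $u$ harmonic on $\Omega$ forces $u=-\log|g|$, so $g$ is zero-free and the period term $\frac{1}{2\pi}\int_{\gamma_j}\frac{\partial u}{\partial n}\,ds$ equals the negative of the winding number of $g$ along $\gamma_j$, hence an integer; the trivial choice $g\equiv 1$, $u\equiv 0$ makes $(1)$ hold and this term vanish. Thus Theorem \ref{thm5} reduces to the statement that the equality holds at $z_0$ if and only if $(k+1)u_j(z_0)\in\mathbb{Z}$ for every $1\le j\le n-1$, and it remains to show that the simultaneous solvability of these $n-1$ integer conditions forces $k\ge n-1$.

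Next I would record two elementary properties of the harmonic measures $u_j$. By the maximum principle $0\le u_j\le 1$ on $\Omega$, and since each $u_j$ is a non-constant harmonic function (its boundary values take both $0$ and $1$), the strong maximum principle upgrades this to $0<u_j<1$ throughout $\Omega$. Consequently, if $m_j:=(k+1)u_j(z_0)$ is an integer then $m_j\in\{1,\dots,k\}$; in particular $m_j\ge 1$.

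The key step is to bring in the remaining boundary component. Let $u_n\in C(\overline{\Omega})$ be the harmonic measure with $u_n=1$ on $\Gamma_n$ and $u_n=0$ on $\Gamma_l$ for $l\ne n$. On every boundary curve $\Gamma_l$ one has $\sum_{j=1}^n u_j=1$, so by uniqueness for the Dirichlet problem $\sum_{j=1}^n u_j\equiv 1$ on $\Omega$. Hence $m_n:=(k+1)u_n(z_0)=(k+1)-\sum_{j=1}^{n-1}m_j$ is again an integer, and $0<u_n<1$ gives $m_n\ge 1$ as well. Summing over all $n$ components yields
\[
k+1=\sum_{j=1}^n m_j\ge n,
\]
that is $k\ge n-1$. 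Taking the contrapositive, if $k\le n-2$ then no point $z_0$ can satisfy all the integer conditions, so the equality fails for every $z_0\in\Omega$, which is exactly the assertion.

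I do not expect a serious obstacle once Theorem \ref{thm5} is available: the real content is the partition identity $\sum_{j=1}^n u_j\equiv 1$, which turns the $n-1$ conditions attached to $\Gamma_1,\dots,\Gamma_{n-1}$ into $n$ positive integers $m_1,\dots,m_n$ subject only to $\sum_j m_j=k+1$. The one point deserving care is the strictness $0<u_j<1$, which yields $m_j\ge 1$ rather than merely $m_j\ge 0$; this is precisely what makes the count $\sum_j m_j\ge n$, and hence the sharp threshold $k\ge n-1$, go through, and it is supplied by the strong maximum principle.
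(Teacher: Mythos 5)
Your proof is correct and follows essentially the same route as the paper: reduce to the integrality conditions $(k+1)u_j(z_0)\in\mathbb{Z}$, introduce $u_n=1-\sum_{j=1}^{n-1}u_j$, use $\sum_{j=1}^n u_j\equiv 1$ together with strict positivity of the harmonic measures to get $n$ positive integers summing to $k+1$, hence $k\ge n-1$. Your preliminary check that condition $(2)$ of Theorem \ref{thm5} is independent of the decomposition $v=\log|g|+u$ when $v\equiv0$ (since the period of $u=-\log|g|$ is an integer multiple of $2\pi$) is a detail the paper passes over silently, but otherwise the arguments coincide.
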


\begin{Remark}
\label{remark7}
Suppose that $\Omega$ is bounded by $3$ analytic closed curves $\Gamma_j$ ($j=1,2,3$), and $v\equiv0$ on $\Omega$. Using Theorem \ref{thm5} and some discussions on harmonic functions, we get that there exists a sufficiently large $k$ and $z_0\in\Omega$ such that 
$$(c_{\beta}(z_0))^{2(k+1)}=\left(\int_0^{+\infty}c(t)e^{-t}dt\right)\frac{\pi }{a}B_{\Omega,\rho_{z_0}}^{(k)}(z_0).$$
 We prove the present remark in Section \ref{proof3}.
\end{Remark}

\section{preparation}
\label{section2}
In this section, we will do some preparations.\par
\subsection{Characters on open Riemann surfaces}\quad\par

Let $\Omega$ be an open Riemann surface, which admits a nontrivial Green function $G_{\Omega}$. Some basic knowledge about open Riemann surfaces  used in this section can be referred to \cite{OF81,nehari}. Let $p: \Delta\rightarrow\Omega$ be the universal covering from unit disc $\Delta$ to $\Omega$. Denote $Deck \left(\Delta\rightarrow\Omega\right)$
to be the set of all biholomorphic isomorphisms $\sigma:\Delta\rightarrow\Delta$ which preserve fibers, i.e., $p\circ \sigma=p$. We firstly recall the following lemma. 

\begin{Lemma}[see \cite{OF81}]
\label{lemma5}
Let $z_1$ be a point in $\Omega$. $Deck\left(\Delta\rightarrow\Omega\right)$ is isomorphic to the fundamental group $\pi_1\left(\Omega,z_1\right)$.
\end{Lemma}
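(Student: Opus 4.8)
The plan is to exhibit the classical isomorphism between the deck transformation group of a universal covering and the fundamental group, taking care that the deck transformations here are required to be biholomorphic. Fix a point $\tilde z_1\in p^{-1}(z_1)\subset\Delta$. Since $\Delta$ is simply connected and $p:\Delta\to\Omega$ is a covering map, for any $\tilde z\in p^{-1}(z_1)$ there is a unique map $\sigma:\Delta\to\Delta$ with $p\circ\sigma=p$ and $\sigma(\tilde z_1)=\tilde z$: existence follows from the lifting criterion (the simple connectivity of $\Delta$ makes the subgroup condition vacuously satisfied), and uniqueness follows from the unique lifting property of coverings. Because $p$ is a local biholomorphism and $\sigma$ is a lift of the holomorphic map $p$, the map $\sigma$ is automatically holomorphic with holomorphic inverse, so $\sigma\in Deck(\Delta\to\Omega)$.

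I would then define $\Phi:\pi_1(\Omega,z_1)\to Deck(\Delta\to\Omega)$ as follows. Given a loop $\gamma$ based at $z_1$, lift it to the unique path $\tilde\gamma$ in $\Delta$ with $\tilde\gamma(0)=\tilde z_1$, and let $\Phi([\gamma])$ be the unique deck transformation sending $\tilde z_1$ to the endpoint $\tilde\gamma(1)\in p^{-1}(z_1)$ produced above. The homotopy lifting property shows that $\tilde\gamma(1)$ depends only on the homotopy class $[\gamma]$, so $\Phi$ is well-defined. Verifying that $\Phi$ is a homomorphism (or anti-homomorphism, depending on the chosen convention for concatenating loops) is a routine concatenation-of-lifts computation, using uniqueness of lifts to identify the lift of a product loop with the corresponding composition of deck transformations.

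For bijectivity: $\Phi$ is injective, since if $\Phi([\gamma])=\mathrm{id}$ then $\tilde\gamma(1)=\tilde z_1$, so $\tilde\gamma$ is a loop in the simply connected space $\Delta$, hence null-homotopic, and pushing the contraction down by $p$ shows that $\gamma$ is null-homotopic. It is surjective because $\Delta$ is path-connected: given $\sigma\in Deck(\Delta\to\Omega)$, join $\tilde z_1$ to $\sigma(\tilde z_1)$ by a path, project it by $p$ to a loop $\gamma$ at $z_1$, and observe that $\Phi([\gamma])=\sigma$ by uniqueness of the prescribed deck transformation.

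The only point requiring genuine care — the main obstacle — is the simultaneous existence and uniqueness of the deck transformation determined by its value at the single point $\tilde z_1$, together with its automatic holomorphicity; everything else is a transcription of the standard topological argument. As this is a well-known fact (see \cite{OF81}), I would cite it and make explicit only the construction of $\Phi$, since it is precisely the concrete correspondence between loops and deck transformations that the later sections rely on.
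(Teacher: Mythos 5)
Your proposal is correct and follows essentially the same route as the paper: both establish the correspondence by lifting loops and matching endpoints of lifts with values of deck transformations at a fixed fiber point, with injectivity from simple connectivity of $\Delta$ and surjectivity from path-connectedness. The only cosmetic difference is that you define the isomorphism from $\pi_1(\Omega,z_1)$ to $Deck(\Delta\to\Omega)$ while the paper defines the inverse map, and you spell out the existence/uniqueness of the deck transformation with prescribed value at $\tilde z_1$ (and its automatic holomorphicity), which the paper simply invokes as a property of the universal covering.
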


\begin{proof}
Let $y_1\in\Delta$ be a point such that $p(y_1)=z_1$. For any $\sigma \in Deck\left(\Delta\rightarrow\Omega\right)$, we can construct a curve $\upsilon$ in $\Delta$ with initial point $y_1$ and end point $\sigma\left(y_1\right)$. Since $\Delta$ is simply connected, the homotopy class $[\upsilon]$ of $\upsilon$ is uniquely determined.
Then $p\circ\upsilon$ is a closed curve in $\Omega$ with endpoint $z_1$, and its homotopy class $[p\circ\upsilon]$ is uniquely determined, hence $[p\circ\upsilon]$ is a definite element in $\pi_1(\Omega,z_1)$. Now we can define a mapping
$$\Phi: Deck(\Delta\rightarrow\Omega)\rightarrow\pi_1(\Omega,z_1)$$
such that
$$\Phi\left(\sigma\right)=[p\circ\upsilon] \in \pi_1\left(\Omega,z_1\right)$$ 
for $\sigma\in Deck(\Delta\rightarrow\Omega)$. 

To prove that $\Phi$ is a group homomorphism, suppose that $\sigma, \tau\in Deck(\Delta\rightarrow\Omega)$, $\upsilon$ is a curve in $\Delta$ with initial point $y_1$ and end point $\sigma(y_1)$, and $\eta$ is a curve in $\Delta$ with initial point $y_1$ and end point $\tau(y_1)$. Then $\sigma\circ\eta$ is a curve in $\Delta$
with initial point $\sigma(y_1)$ and end point $\sigma\tau(y_1)$. Denote that $\upsilon\cdot(\sigma\circ\eta)$ is a curve on $\Delta$, which satisfies that $\upsilon\cdot(\sigma\circ\eta)(t)=\upsilon(2t)$ on $[0,\frac{1}{2}]$ and $\upsilon\cdot(\sigma\circ\eta)(t)=\sigma\circ\eta(2t-1)$ on $[\frac{1}{2},1]$. Note that $\upsilon\cdot(\sigma\circ\eta)$ has initial point $y_1$ and end point $\sigma\tau(y_1)$. Thus 
\begin{align}
\Phi(\sigma\tau)&=[p\circ(\upsilon\cdot(\sigma\circ\eta))]=[p\circ\upsilon][p\circ(\sigma\circ \eta)]\notag\\
&=[p\circ\upsilon][p\circ\eta]=\Phi(\sigma)\Phi(\tau)\notag.
\end{align}

To prove the injectivity of $\Phi$, suppose that $\Phi(\sigma)$ is trivial in $\pi_1(\Omega,z_1)$, then $p\circ\upsilon$ is null-homotopic. Since $\upsilon$ is the lifting of $p\circ\upsilon$, $\upsilon$ is also null-homotopic, hence $\sigma (y_1)=y_1$. $\sigma$ is therefore the identity mapping. 

To prove the surjectivity of $\Phi$, suppose that $\alpha$ is an element in $\pi_1(\Omega,z_1)$ and $u$ is a curve representing of $\alpha$. Let $\upsilon$ be the lifting of $u$ to $\Delta$ with initial point $y_1$, 
and denote the end point of $\upsilon$ by $y_2$. Then by the property of universal covering, there exists $\sigma\in Deck(\Delta\rightarrow\Omega)$ such that $\sigma (y_1)=y_2$. From the definition of $\Phi$, we have $\Phi(\sigma)=\alpha.$ 
\end{proof}

Let $h$ and $h_1$ be harmonic functions on an open subset $V$ of $\Omega$. We call $h_1$ is a harmonic conjugate of $h$ on $V$, if  $h+ih_1$ is a holomorphic function on $V$.
 Assume that $V$ is simply-connected, then for any harmonic function $h$ on $V$, there is a harmonic conjugate $\tilde h$ of $h$.  
Denote that 
$$\widetilde{d}=\frac{\partial-\bar{\partial}}{i}.$$

\begin{Lemma}
\label{lemma6}Let $z_1,z_2\in V$, and let $\gamma$ be a piecewise smooth curve in $V$ from $z_1$ to $z_2$. Then we have  
$$\tilde{h}(z_2)-\tilde{h}(z_1)=\int_{\gamma}\widetilde{d}h.$$
\end{Lemma}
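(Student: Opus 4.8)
The plan is to reduce the statement to the fundamental theorem of calculus for line integrals by identifying the (real) $1$-form $\widetilde{d}h$ with the exterior differential $d\tilde{h}$ of the harmonic conjugate. Once the identity $\widetilde{d}h=d\tilde{h}$ is in hand, the form on the right is exact, so its line integral depends only on the endpoints, and we immediately obtain $\int_{\gamma}\widetilde{d}h=\int_{\gamma}d\tilde{h}=\tilde{h}(z_2)-\tilde{h}(z_1)$ for any piecewise smooth $\gamma$ from $z_1$ to $z_2$.

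To prove $\widetilde{d}h=d\tilde{h}$, I would first work in a local holomorphic coordinate $z=x+iy$ and expand the left-hand side. Writing $\partial h=\frac{\partial h}{\partial z}\,dz$ and $\bar{\partial}h=\frac{\partial h}{\partial\bar z}\,d\bar z$ with the Wirtinger derivatives $\frac{\partial}{\partial z}=\frac{1}{2}(\partial_x-i\partial_y)$, $\frac{\partial}{\partial\bar z}=\frac{1}{2}(\partial_x+i\partial_y)$ and $dz=dx+i\,dy$, $d\bar z=dx-i\,dy$, a direct computation gives
$$\widetilde{d}h=\frac{1}{i}\left(\partial h-\bar{\partial}h\right)=-\frac{\partial h}{\partial y}\,dx+\frac{\partial h}{\partial x}\,dy.$$
On the other hand, since $f:=h+i\tilde{h}$ is holomorphic on $V$, the Cauchy--Riemann equations give $\frac{\partial\tilde{h}}{\partial x}=-\frac{\partial h}{\partial y}$ and $\frac{\partial\tilde{h}}{\partial y}=\frac{\partial h}{\partial x}$, so that $d\tilde{h}=\frac{\partial\tilde{h}}{\partial x}\,dx+\frac{\partial\tilde{h}}{\partial y}\,dy=-\frac{\partial h}{\partial y}\,dx+\frac{\partial h}{\partial x}\,dy$ as well. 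Comparing the two expressions yields the identity in the chart. Since $\partial$ and $\bar{\partial}$, and hence $\widetilde{d}$, are globally defined operators on the Riemann surface $\Omega$ (coming from the canonical splitting of $d$ by the complex structure), while $\tilde{h}$ is a globally defined function on the simply-connected set $V$ whose existence is guaranteed by the discussion preceding the lemma, the local identity patches together, so $\widetilde{d}h=d\tilde{h}$ holds on all of $V$ as an equality of exact $1$-forms.

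Finally, I would parametrize $\gamma:[0,1]\to V$ with $\gamma(0)=z_1$ and $\gamma(1)=z_2$, partition $[0,1]$ into finitely many subintervals on each of which $\gamma$ is smooth and stays inside a single coordinate chart, and apply the fundamental theorem of calculus to $t\mapsto\tilde{h}(\gamma(t))$ on each piece; the telescoping sum over the subintervals gives $\int_{\gamma}d\tilde{h}=\tilde{h}(z_2)-\tilde{h}(z_1)$, completing the argument. I do not anticipate a genuine obstacle here: the only points requiring care are the sign bookkeeping in the Wirtinger expansion of $\widetilde{d}h$, the correct orientation of the Cauchy--Riemann relations for $h+i\tilde{h}$, and the observation that the chartwise identity is in fact coordinate-independent so that it extends globally along $\gamma$.
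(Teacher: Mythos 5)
Your proof is correct and follows essentially the same route as the paper's: both hinge on the coordinate computation $\widetilde{d}h=\frac{\partial h}{\partial x}\,dy-\frac{\partial h}{\partial y}\,dx$ and the fact (via the Cauchy--Riemann equations) that this form is exactly $d\tilde{h}$, so the integral telescopes to $\tilde{h}(z_2)-\tilde{h}(z_1)$. The only cosmetic difference is that you make the identity $\widetilde{d}h=d\tilde{h}$ explicit and invoke the fundamental theorem for line integrals, whereas the paper phrases the same fact as $\tilde{h}(z)=\int_{\gamma_z}\widetilde{d}h+C$.
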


\begin{proof} Without loss of generality, assume that $V$ is a simply-connected open subset of $\mathbb{C}$ and $\gamma$ is smooth (we can divide $\gamma$ into finite smooth curves which are contained in simply-connected coordinate open subsets of $\Omega$).

Note that
\begin{align} 
\widetilde{d}v&=\frac{1}{i}(\partial v -\bar{\partial} v)  \notag \\
&=\frac{1}{i}\left(\frac{\partial v}{\partial z}dz-\frac{\partial v}{\partial \bar{z}}d\bar{z}\right)\notag \\
&=\frac{1}{2i}\left(\left(\frac{\partial v}{\partial x}+\frac{1}{i}\frac{\partial v}{\partial y}\right)(dx+idy)-\left(\frac{\partial v}{\partial x}-\frac{1}{i}\frac{\partial v}{\partial y}\right)(dx-idy)\right)\notag\\
&=\frac{\partial v}{\partial x}dy-\frac{\partial v}{\partial y}dx\notag
\end{align}
holds for smooth function $v$, 
thus we know that 
$$\tilde h(z)=\int_{\gamma_z}\widetilde{d}h+C$$
 for any $z\in V$, where $\gamma_z$ is any smooth curve in $V$ from $z_1$ to $z$. As $h$ is harmonic and $V$ is simply-connected, we have 
 $$\tilde h(z_2)-\tilde h(z_1)=\int_{\gamma_{z_2}}\widetilde dh=\int_{\gamma}\widetilde{d}h.$$ 
\end{proof}

Let $z_0\in \Omega$, and let $u$ be a harmonic function on $\Omega$. There exist $f_{z_0},f_u\in\mathcal{O}(\Delta)$  such that $|f_{z_0}|=p^*e^{G_{\Omega}(\cdot,z_0)}$ and  $|f_u|=p^*e^u$. Let $z_1\in\Omega$ and $z_1\not=z_0$. By the isomorphism $\Phi: Deck(\Delta\rightarrow\Omega)\rightarrow\pi_1(\Omega,z_1)$ constructed in Lemma \ref{lemma5},  $\Phi^{-1}(g)$ is a biholomorphic isomorphism  on $\Delta$ which preserves fibers for any $g\in \pi_1(\Omega,z_1)$. Then we have 
$$\Phi^{-1}(g)^*f_{z_0}=\chi_{z_0}(g)f_{z_0}\,\,\text{ and }\,\,\Phi^{-1}(g)^*f_{u}=\chi_{u}(g)f_{u}.$$

Since $p:\Delta\rightarrow\Omega$ is the universal covering, any element in $\pi_1(\Omega,z_1)$ can be represented by $p\circ\widetilde{\gamma}$ where $\widetilde{\gamma}$ is a curve in $\Delta$, and it only depends on the starting point and end point of $\widetilde{\gamma}$ by Lemma \ref{lemma5}. Thus, any $g\in\pi_1(\Omega,z_1)$ can be represented by a piecewise smooth curve $\gamma$ in $\Omega$ satisfying $z_0\not\in \gamma$.
 
In the following proposition, we represent $\chi_{z_0}(g)$ and $\chi_{u}(g)$ by the integrals of $\widetilde{d}G_{\Omega}(\cdot,z_0)$ and $\widetilde{d}u$ along $\gamma$ respectively.

\begin{Proposition}
\label{thm1}
 Let $g$ be an element in $\pi_1(\Omega,z_1)$ which can be represented by $\gamma$, where $\gamma$ is a piecewise smooth curve in $\Omega$ and $z_0\not\in\gamma$. Then  we have
$$\chi_{z_0}(g)=e^{i\int_{\gamma}\widetilde{d}G_{\Omega}(\cdot,z_0)}$$
and
$$\chi_{u}(g)=e^{i\int_{\gamma}\widetilde{d}u}.$$
\end{Proposition}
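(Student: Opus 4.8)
The plan is to prove both identities by one and the same computation, reducing the value of the character to the change of a (locally defined) harmonic conjugate along a lift of $\gamma$, and then invoking Lemma \ref{lemma6}. I would treat $\chi_u$ first, since $f_u$ is nowhere zero, and afterwards point out the (minor) modification needed for $\chi_{z_0}$, where $f_{z_0}$ vanishes on $p^{-1}(z_0)$.

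First I would fix $y_1\in p^{-1}(z_1)$ and set $\sigma:=\Phi^{-1}(g)\in Deck\left(\Delta\rightarrow\Omega\right)$. By the construction of $\Phi$ in Lemma \ref{lemma5}, the lift $\widetilde{\gamma}$ of $\gamma$ with initial point $y_1$ has end point $\sigma(y_1)=:y_2$, and $p\circ\widetilde{\gamma}=\gamma$. Evaluating the defining relation $\sigma^*f_u=\chi_u(g)f_u$ at $y_1$ and using that $|f_u|=p^*e^u$ is nowhere zero gives $\chi_u(g)=f_u(y_2)/f_u(y_1)$. Since $\Delta$ is simply connected and $f_u$ is non-vanishing, there is a single-valued holomorphic branch $\log f_u$ on $\Delta$ with $\mathrm{Re}(\log f_u)=\log|f_u|=p^*u$, so that $\mathrm{Im}(\log f_u)$ is a global harmonic conjugate of the harmonic function $p^*u$. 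Because $p(y_1)=p(y_2)=z_1$, the real part of $\log f_u(y_2)-\log f_u(y_1)$ vanishes (consistent with $|\chi_u(g)|=1$), whence $\chi_u(g)=\exp\big(i(\mathrm{Im}\log f_u(y_2)-\mathrm{Im}\log f_u(y_1))\big)$.

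It then remains to identify this exponent. Applying Lemma \ref{lemma6} with $h=p^*u$, after cutting $\widetilde{\gamma}$ into finitely many pieces lying in simply-connected coordinate charts, yields $\mathrm{Im}\log f_u(y_2)-\mathrm{Im}\log f_u(y_1)=\int_{\widetilde{\gamma}}\widetilde{d}(p^*u)$. Finally, since $p$ is holomorphic, $\widetilde{d}$ commutes with $p^*$, so $\int_{\widetilde{\gamma}}\widetilde{d}(p^*u)=\int_{\widetilde{\gamma}}p^*(\widetilde{d}u)=\int_{p\circ\widetilde{\gamma}}\widetilde{d}u=\int_{\gamma}\widetilde{d}u$, which gives the formula for $\chi_u(g)$. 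For $\chi_{z_0}(g)$ the only new feature is that $f_{z_0}$ vanishes exactly on $p^{-1}(z_0)$, so $\log f_{z_0}$ and the harmonic conjugate of $p^*G_{\Omega}(\cdot,z_0)$ are only locally defined on $\Delta\setminus p^{-1}(z_0)$. This is precisely where the hypothesis $z_0\notin\gamma$ enters: it guarantees $\widetilde{\gamma}$ avoids $p^{-1}(z_0)$, so a continuous branch of $\log f_{z_0}$ along $\widetilde{\gamma}$ exists and $\int_{\gamma}\widetilde{d}G_{\Omega}(\cdot,z_0)$ is finite; with this branch fixed the identical computation gives $\chi_{z_0}(g)=e^{i\int_{\gamma}\widetilde{d}G_{\Omega}(\cdot,z_0)}$.

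The step requiring the most care is this last bookkeeping of the multivalued harmonic conjugate of $G_{\Omega}(\cdot,z_0)$ near its logarithmic singularity: one must ensure that the argument uses only a continuous branch of $\log f_{z_0}$ along the fixed curve $\widetilde{\gamma}$ (a global conjugate does not exist, since $\Delta\setminus p^{-1}(z_0)$ is not simply connected), and that the winding of $f_{z_0}$ about its zeros does not contribute because $\widetilde{\gamma}$ stays in the zero-free set. Everything else is routine: the commutation $p^*\widetilde{d}=\widetilde{d}\,p^*$ for the holomorphic map $p$, and the telescoping application of Lemma \ref{lemma6}.
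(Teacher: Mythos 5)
Your proposal is correct and follows essentially the same route as the paper: both reduce $\chi(g)$ to the ratio $f(\sigma(y_1))/f(y_1)$ along the lift of $\gamma$, and both evaluate that ratio by tracking the increment of a (locally defined) harmonic conjugate via Lemma \ref{lemma6} and a telescoping argument; your use of a continuous branch of $\log f$ upstairs on $\Delta$ is just the additive form of the paper's product of local ratios $f(b_j)/f(a_j)$ computed downstairs on the charts $p(V_j)$. Your extra care about the non-existence of a global conjugate of $G_{\Omega}(\cdot,z_0)$ near $p^{-1}(z_0)$ is exactly the point where the hypothesis $z_0\notin\gamma$ is used, as in the paper.
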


\begin{proof}
\label{propo1}
Lift $\gamma$ to a  curve $\tilde\gamma$ in $\Delta$ with initial point $y_1$. 
Denote the end point of $\tilde\gamma$ by $y$. 
By the isomorphism $\Phi: Deck(\Delta\rightarrow\Omega)\rightarrow\pi_1(\Omega,z_1)$ constructed in Lemma \ref{lemma5},  $\Phi^{-1}(g)$ is a biholomorphic isomorphism $\sigma:\Delta\rightarrow\Delta$ which preserves fibers and $\sigma(y_1)=y$.
For any holomorphic function $f$ in $\Delta$, $\sigma^*f$ is a holomorphic function in $\Delta$ and $\sigma^{*}f(y_1)=f(y)$. As $\sigma^*f_{z_0}=\chi_{z_0}(g)f_{z_0}$, we have 
\begin{equation}
	\label{eq:1108a}f_{z_0}(y)=\sigma^*f_{z_0}(y_1)=\chi_{z_{0}}(g)f_{z_0}(y_1).
\end{equation}

Let $\{V_j\}_{j=1}^N$ be a simply-connected open covering of $\tilde\gamma$ satisfying that $p$ maps $V_j $ biholomorphically onto $p(V_j)$. We can divide $\tilde\gamma$ into $\gamma_j$ such that $\gamma_j\subset V_j$. Denote the initial point of $\gamma_j$ by $a_j$, the end point of $\gamma_j$ by $b_j$.

By Lemma \ref{lemma6}, for any $j$, we have 
$$f_{z_0}=p^*e^{(G_{\Omega}(\cdot,z_0)+i\widetilde{G_{\Omega}}(\cdot,z_0))}$$
on $V_j$
and 
$$\widetilde{G_{\Omega}}(p(b_j),z_0)-\widetilde{G_{\Omega}}(p(a_j),z_0)=\int_{p_*\gamma_j}\widetilde{d}G_{\Omega}(\cdot,z_0),$$ 
where $\widetilde{G_{\Omega}}(z,z_0)$ is the harmonic conjugate of $G_{\Omega}(z,z_0)$ on $p(V_j)$.  Therefore
\begin{align}
\frac{f_{z_0}(b_j)}{f_{z_0}(a_j)}&=e^{G_{\Omega}(p(b_j),z_0)+i\widetilde{G_{\Omega}}(p(b_j),z_0)-G_{\Omega}(p(a_j),z_0)-i\widetilde{G_{\Omega}}(p(a_j),z_0)}\notag\\
&=e^{G_{\Omega}(p(b_j),z_0)-G_{\Omega}(p(a_j),z_0)+i\int_{p_*\gamma_j} \widetilde{d}G_{\Omega}(z,z_0)}.\notag
\end{align}
As $p(a_1)=p(b_N)$, we have
\begin{align}
\frac{f_{z_0}(b_N)}{f_{z_0}(a_1)}&=\prod_{j=1}^N\frac{f_{z_0}(b_j)}{f_{z_0}(a_j)}\notag\\
&=e^{\sum_{j=1}^N(G_{\Omega}(p(b_j),z_0)-G_{\Omega}(p(a_j),z_0)+i\int_{p_*\gamma_j} \widetilde{d}G_{\Omega}(\cdot,z_0))}\notag\\
&=e^{G_{\Omega}(p(b_N),z_0)-G_{\Omega}(p(a_1),z_0)+i\int_{\gamma} \widetilde{d}G_{\Omega}(\cdot,z_0)}\notag\\
&=e^{i\int_{\gamma}\widetilde{d}G_{\Omega}(\cdot,z_0)}.\notag
\end{align}
Note  that $a_1=y_1$, $b_N=y$ and $p(a_1)=p(b_N)=z_1$, thus $$f_{z_0}(y)=f_{z_0}(y_1)e^{i\int_{\gamma}\widetilde{d}G_{\Omega}(\cdot,z_0) }.$$ 
Combining this with equality \eqref{eq:1108a}, we therefore have 
$$\chi_{z_0}(g)=e^{i\int_{\gamma}\widetilde{d}G_{\Omega}(\cdot,z_0)}.$$ 
Similarly, we have 
$$\chi_{u}(g)=e^{i\int_{\gamma}\widetilde{d}u}.$$
\end{proof}

\subsection{Some lemmas on planar domains}\quad\par
In this section, we consider the case that the open Riemann surface $\Omega$ is a bounded connected open subset of $\mathbb{C}$ such that $\mathbb{C}\backslash\Omega=\bigcup_{j=1}^{n} A_j$, where $\{A_j\}$ are disjoint connected closed sets and each $A_j$ is not an isolated point.   We recall the following well-known lemma.
\begin{Lemma}
\label{lemma7}
Assume that $\Omega$ is a bounded $n$-connected open subset of $\mathbb{C}$ such that no connected component of $\mathbb{C}\backslash \Omega$ is an isolated point, then $\Omega$ is conformally equivalent to an open subset of $\mathbb{C}$ bounded by $n$ closed analytic curves.
\end{Lemma}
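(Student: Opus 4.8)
The plan is to prove the lemma on the Riemann sphere $\hat{\mathbb{C}} = \mathbb{C}\cup\{\infty\}$ and to straighten out the boundary one component at a time by repeated use of the Riemann mapping theorem. First I would pass to the sphere: since $\Omega$ is bounded, exactly one component of $\mathbb{C}\setminus\Omega$ is unbounded, and adjoining $\infty$ to it I may regard $\hat{\mathbb{C}}\setminus\Omega = A_1\cup\cdots\cup A_n$ as a disjoint union of $n$ compact connected sets, each of which contains more than one point (this is exactly where the hypothesis that no component of $\mathbb{C}\setminus\Omega$ is an isolated point is used).

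The key preliminary observation is that for any domain $D\subseteq\hat{\mathbb{C}}$ and any connected component $A$ of $\hat{\mathbb{C}}\setminus D$, the open set $\hat{\mathbb{C}}\setminus A$ is \emph{connected and simply connected}: it is connected because every other complementary component, together with the connected set $D$ with which it shares boundary points, forms a connected set, and all of these contain $D$; and it is simply connected because its complement $A$ is connected (for an open subset of the sphere, having connected complement is equivalent to being simply connected). Applying this with $A=A_1$ and $D=\Omega$ gives that $U_1 := \hat{\mathbb{C}}\setminus A_1$ is simply connected, and since $A_1$ is a nondegenerate continuum, $U_1$ is neither $\hat{\mathbb{C}}$ nor $\hat{\mathbb{C}}$ minus a point; hence the Riemann mapping theorem furnishes a conformal $\phi_1\colon U_1\to\Delta$. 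As the remaining components $A_2,\dots,A_n$ are compactly contained in $U_1$, their images are compacta in $\Delta$, a full annular neighbourhood of $\partial\Delta$ lies inside $\phi_1(\Omega)$, and so $\phi_1(\Omega)$ is again $n$-connected with the component coming from $A_1$ now equal to the closed exterior disc, bounded by the analytic curve $\partial\Delta$.

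Next I would iterate. Suppose components $A_1,\dots,A_{j-1}$ have already been processed, each now a closed Jordan domain with analytic boundary, and let $B_j$ denote the current form of $A_j$. I form $U_j=\hat{\mathbb{C}}\setminus B_j$, apply the Riemann mapping theorem to obtain a conformal $\phi_j\colon U_j\to\Delta$, and thereby turn $B_j$ into a closed exterior disc with analytic (circular) boundary. The crucial point is that the already-processed components are \emph{disjoint compact subsets of the open set} $U_j$, so $\phi_j$ is biholomorphic on a neighbourhood of each of them; since the image of an analytic curve under a locally biholomorphic map is again analytic, all previously straightened boundary curves remain analytic after $\phi_j$. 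After $n$ steps every complementary component is a closed Jordan domain bounded by an analytic closed curve. Finally, choosing an interior point of one complementary component and sending it to $\infty$ by a Möbius transformation returns the configuration to a bounded domain in $\mathbb{C}$ whose $n$ boundary curves are analytic (Möbius maps preserve analyticity).

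The step I expect to be the main obstacle is precisely the persistence of analyticity under the later maps. One must check that each freshly processed component is a genuinely \emph{fat} closed Jordan domain, namely the conformal image of the closed disc $\{|w|\ge 1\}$ and hence with nonempty interior and an analytic Jordan boundary, rather than a degenerate arc; and that the previously straightened curves lie in the interior of the domain of the next Riemann map, so that $\phi_j$ acts biholomorphically across them. Both facts rest on the same two ingredients: conformal maps are homeomorphisms and therefore preserve nondegeneracy, and the complementary components are pairwise disjoint compacta, so at each stage all of them but the one being processed sit inside the open domain on which the next map is conformal.
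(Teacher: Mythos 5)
Your proposal is correct and follows essentially the same route as the paper: pass to the Riemann sphere, note that the complement of a single complementary component is simply connected, apply the Riemann mapping theorem to straighten that component's boundary into a circle, and iterate over all $n$ components, observing that previously straightened boundaries stay analytic because they lie in the interior of the domain of each subsequent conformal map. The paper's proof is just a terser version of this same induction, so no further comparison is needed.
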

\begin{proof}
	For convenience of readers, we give a proof of this lemma by using Riemann mapping theorem.
	
Assume that $\mathbb{C}\backslash\Omega=\bigcup_{j=1}^n A_j$, where $A_j$ are disjoint connected closed sets. Denote the boundary of $A_j$ by $C_j$. Without loss of generality, assume that $A_n$ is unbounded.

Let $A=\Omega\bigcup(\bigcup_{j=2}^{n}A_j)\cup\{\infty\}=\mathbb{CP}^1\backslash A_1$, which is a simply-connected proper open subset of $\mathbb{CP}^1$ whose boundary $C_1$ is not an isolated point.
Then by Riemann mapping theorem, $A$ can be mapped conformally onto the unit disc $\Delta$. This mapping can transform $\Omega$ onto $\Omega_1=\Delta\backslash (\bigcup_{j=2}^{n}A_j')$, 
where $A_j'$ is the conformal image of $A_j$ and $C_j'=\partial A _j'$ is the conformal image of $C_j$ for $2\le j\le n$. In further let $A_1'=\mathbb{C}\backslash\Delta$, $C_1'=\partial A_1'=\{z\in\mathbb{C}:|z|=1\}$.

Repeating the above procedure for another $n-1$ times, we can transform $\Omega$ onto $\Omega_n=\Delta\backslash(\bigcup_{j=1}^{n-1}A_j^{(n)})$, where $C_j^{(n)}=\partial A_j^{(n)}$ are closed analytic curves for $1\le j\le n-1$.  It is obvious that $\Omega_n$ is a bounded open subset of $\mathbb{C}$ which is bounded by $n$ closed analytic curves.   
\end{proof}

 We recall the following basic knowledge in algebraic topology.
 
\begin{Lemma}[see \cite{fulton}]\label{lemma8} Let $U$ be a bounded connected open subset of $\mathbb{C}$ such that $\mathbb{C}\backslash U=\bigcup_{j=1}^m A_j$, where $A_j$ are disjoint connected closed sets and $A_m$ is unbounded. For each $1\le j\le m-1$, there exists a closed smooth curve $\gamma_j$ in $\Omega$ which winds around points in $A_j$
once and does not wind around points in $A_l$ for $l\neq j$. If $\gamma_j'$ is an another closed smooth curve which satisfies these conditions, then $\gamma_j'$ is homologous to $\gamma_j$. The homology classes of these $m-1$ curves form a basis for the first homology group $H_1(U)$. That is, these curves are not homologous to each other and any curve in $U$ is homologous to some linear combinations of these curves. 
\end{Lemma}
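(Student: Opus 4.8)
The plan is to encode everything through winding numbers and to reduce the three assertions of the lemma to a single isomorphism statement. For each bounded complementary component $A_j$ ($1\le j\le m-1$) fix a point $p_j\in A_j$, and for a cycle $\gamma$ in $U$ set $n_j(\gamma):=\frac{1}{2\pi i}\int_{\gamma}\frac{dz}{z-p_j}$. Since $\frac{dz}{z-p_j}$ is a closed $1$-form on $U$, this integer depends only on the class of $\gamma$ in $H_1(U)$; and because the winding number is locally constant on $\mathbb{C}\setminus\gamma$ and $A_j$ is connected, $n_j(\gamma)$ is independent of the choice of $p_j\in A_j$. We thus obtain a homomorphism
$$\Phi:=(n_1,\dots,n_{m-1}):H_1(U)\to\mathbb{Z}^{m-1}.$$
All three claims of the lemma will follow once we show that $\Phi$ is an isomorphism carrying $[\gamma_j]$ to the standard basis vector $e_j$.

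First I would establish existence together with surjectivity of $\Phi$. Fix $j\le m-1$. The set $A_j$ is compact, connected, and disjoint from the closed set $\bigcup_{l\neq j}A_l$, so for $\varepsilon>0$ smaller than $\mathrm{dist}\left(A_j,\bigcup_{l\ne j}A_l\right)$ the $\varepsilon$-neighborhood $N_\varepsilon(A_j)$ is a bounded connected open set containing $A_j$ whose closure meets no other $A_l$. Its outer boundary (the frontier of the unbounded component of the complement of $\overline{N_\varepsilon(A_j)}$) lies in $N_\varepsilon(A_j)\setminus A_j\subset U$, and after a routine smoothing it yields a smooth closed curve $\gamma_j\subset U$ enclosing $A_j$. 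By construction $n_j(\gamma_j)=1$, while $n_l(\gamma_j)=0$ for every $l\neq j$ (including $l=m$, since $\gamma_j$ lies in a bounded region disjoint from the unbounded component $A_m$). Hence $\Phi([\gamma_j])=e_j$, and as the $e_j$ generate $\mathbb{Z}^{m-1}$, the map $\Phi$ is surjective.

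The essential point, and the step I expect to be the main obstacle, is the injectivity of $\Phi$, i.e.\ that a cycle whose winding number vanishes around every bounded complementary component is null-homologous in $U$. Here I would invoke the winding-number criterion for null-homology in a planar open set (the homological form of Cauchy's theorem, developed in \cite{fulton}): a cycle $\gamma$ in $U$ is null-homologous in $U$ if and only if $n(\gamma,w)=0$ for every $w\in\mathbb{C}\setminus U$. Granting this, suppose $\Phi([\gamma])=0$. The function $w\mapsto n(\gamma,w)$ is constant on each connected component of $\mathbb{C}\setminus U$, which are exactly the $A_j$; it equals $n_j(\gamma)=0$ on $A_j$ for $j\le m-1$, and it equals $0$ on the unbounded component $A_m$ because the bounded curve $\gamma$ has winding number $0$ about all sufficiently distant points. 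Therefore $n(\gamma,w)=0$ for all $w\notin U$, so $\gamma$ is null-homologous and $\Phi$ is injective. Alternatively, embedding $U$ in $S^2=\mathbb{C}\cup\{\infty\}$, one may deduce $H_1(U)\cong\mathbb{Z}^{m-1}$ directly from Alexander duality, since $S^2\setminus U=A_1\cup\cdots\cup A_{m-1}\cup(A_m\cup\{\infty\})$ has exactly $m$ connected components and $\tilde H_1(U)\cong\tilde H^0(S^2\setminus U)$.

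Finally, the remaining assertions are immediate. Since $\Phi$ is an isomorphism sending $[\gamma_j]\mapsto e_j$, the classes $[\gamma_1],\dots,[\gamma_{m-1}]$ form a basis of $H_1(U)\cong\mathbb{Z}^{m-1}$; in particular they are linearly independent, so no two are homologous, and every cycle $\gamma$ in $U$ is homologous to the integer combination $\sum_j n_j(\gamma)\,\gamma_j$. For the uniqueness clause, if $\gamma_j'$ is another smooth closed curve with $n_j(\gamma_j')=1$ and $n_l(\gamma_j')=0$ for $l\neq j$, then $\Phi([\gamma_j'])=e_j=\Phi([\gamma_j])$, whence $[\gamma_j']=[\gamma_j]$ by injectivity, i.e.\ $\gamma_j'$ is homologous to $\gamma_j$. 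This completes the plan.
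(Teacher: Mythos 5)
The paper offers no proof of this lemma --- it is quoted directly from Fulton's book --- so the only comparison available is with the standard treatment there, which your argument essentially reproduces: encode homology classes by their winding numbers about the bounded complementary components, show the resulting map $\Phi:H_1(U)\to\mathbb{Z}^{m-1}$ is an isomorphism sending $[\gamma_j]$ to $e_j$, and read off all three assertions. The injectivity step via the homological form of Cauchy's theorem (a cycle in $U$ bounds iff its winding number vanishes at every point of $\mathbb{C}\setminus U$) and the Alexander-duality cross-check are both correct, and are exactly the tools Fulton develops.

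There is, however, a genuine gap in the existence step: you assert $n_l(\gamma_j)=0$ for all $l\neq j$ but justify it only for the unbounded component $l=m$. For a bounded $A_l$ with $l\neq j$ this is not automatic: if $A_l$ happened to lie in a bounded component $W$ of $\mathbb{C}\setminus A_j$ (picture $A_j$ an annulus-shaped continuum encircling $A_l$), then $A_l$ would sit inside your outer boundary curve and $n_l(\gamma_j)=\pm1$. What excludes this configuration is precisely the connectedness of $U$ --- a hypothesis your proof never invokes: since $\partial W\subset A_j$ is disjoint from $U$, the sets $U\cap W$ and $U\setminus\overline{W}$ would be disjoint open sets covering $U$, and both can be checked to be nonempty, contradicting connectedness. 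This needs to be said explicitly. A smaller soft spot in the same step: the outer boundary of $N_\varepsilon(A_j)$ need not be a curve at all, so ``routine smoothing'' hides real work; the clean version mollifies the indicator function of $N_\varepsilon(A_j)$, takes a regular level set via Sard's theorem, and selects from the resulting finite union of smooth circles one whose winding number about $A_j$ is $\pm1$ (such a circle exists because the full boundary cycle has total winding number $1$ about $A_j$), after which the connectedness argument above again guarantees that no other $A_l$ is enclosed.
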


\begin{Remark}
\label{remark5}
The times that a closed smooth curve $\gamma$ winds around a point $P=(x_0,y_0)\in\mathbb{R}^2$ is defined as follows. In $\mathbb{R}^2\backslash (x_0,y_0)$ we can define a differential $\omega_P=\frac{-(y-y_0)dx+(x-x_0)dy}{(x-x_0)^2+(y-y_0)^2}$, and define $W(\gamma,P)=\frac{1}{2\pi}\int_{\gamma}\omega_P$ to be the times the curve $\gamma$ winds around the point $P$.
For example, if $\gamma$ is the curve $\gamma:[0,1]\rightarrow\mathbb{R}^2\backslash(x_0,y_0), \gamma(t)=(x_0+r_0\cos 2\pi t,y_0+2\sin\pi t)$, then $W(\gamma,P)=1$. 
For arbitrary closed curve, $W(\gamma,P)$ is always an integer. For details, see \cite{fulton}.
\end{Remark}

\begin{Lemma}
	\label{l:1}$\frac{\partial v}{\partial n}ds=\widetilde{d}v$ holds for smooth function $v$, where $\frac{\partial}{\partial n}$ means differentiation in the direction of the outward pointing normal, $s$ is the arc-length parameter and $\widetilde d=\frac{\partial-\bar\partial}{\sqrt{-1}}$.
\end{Lemma}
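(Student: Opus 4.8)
The plan is to reduce everything to the pointwise formula for $\widetilde{d}v$ already obtained in the proof of Lemma \ref{lemma6}, namely
\[
\widetilde{d}v=\frac{\partial v}{\partial x}\,dy-\frac{\partial v}{\partial y}\,dx,
\]
and then to match the restriction of this $1$-form to a curve against the normal derivative written in arc-length coordinates. Since the asserted identity is local and is an equality of $1$-forms, I would work in a local holomorphic coordinate $z=x+iy$ and verify it by pairing both sides with the unit tangent of an arbitrary smooth curve $\gamma$ parametrized by arc length $s$.

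First I would fix such a parametrization $\gamma(s)=(x(s),y(s))$ with $\left(\frac{dx}{ds}\right)^2+\left(\frac{dy}{ds}\right)^2=1$, so that the unit tangent is $T=\left(\frac{dx}{ds},\frac{dy}{ds}\right)$. I would then record the standard fact that the outward-pointing unit normal $n$ is obtained from $T$ by a clockwise rotation through $\pi/2$, i.e.\ $n=\left(\frac{dy}{ds},-\frac{dx}{ds}\right)$. This is the point at which the orientation convention enters, and I would justify it on the model case of the positively (counterclockwise) oriented circle $\gamma(s)=(\cos s,\sin s)$, where $T=(-\sin s,\cos s)$ and the radially outward normal is indeed $(\cos s,\sin s)=\left(\frac{dy}{ds},-\frac{dx}{ds}\right)$.

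Next I would compute the left-hand side. By definition of the directional derivative,
\[
\frac{\partial v}{\partial n}=\nabla v\cdot n=\frac{\partial v}{\partial x}\frac{dy}{ds}-\frac{\partial v}{\partial y}\frac{dx}{ds},
\]
so that $\frac{\partial v}{\partial n}\,ds=\left(\frac{\partial v}{\partial x}\frac{dy}{ds}-\frac{\partial v}{\partial y}\frac{dx}{ds}\right)ds$. On the other hand, restricting $\widetilde{d}v$ to $\gamma$ and substituting $dx=\frac{dx}{ds}\,ds$ and $dy=\frac{dy}{ds}\,ds$ into the formula from Lemma \ref{lemma6} yields exactly the same expression. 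As the tangent direction of $\gamma$ at the point under consideration is arbitrary, the two $1$-forms agree there, which is the asserted identity.

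The computation itself is routine; the only genuine care lies in the sign and orientation bookkeeping needed to identify the outward normal with the suitably rotated tangent. I expect that to be the sole subtlety, and it is settled once the convention is pinned down on the model circle as above, after which the two displayed expressions coincide term by term.
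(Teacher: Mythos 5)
Your proposal is correct and follows essentially the same route as the paper: both start from the identity $\widetilde{d}v=\frac{\partial v}{\partial x}dy-\frac{\partial v}{\partial y}dx$ established in the proof of Lemma \ref{lemma6}, identify the outward normal as the clockwise rotation of the unit tangent so that $\cos(x,n)=\frac{dy}{ds}$ and $\cos(y,n)=-\frac{dx}{ds}$, and then match $\frac{\partial v}{\partial n}\,ds$ with the restriction of $\widetilde{d}v$ to the curve. Your check of the orientation convention on the unit circle is a harmless addition; the argument is otherwise the same.
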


\begin{proof}
Note that 
$$\widetilde{d}v=\frac{\partial v}{\partial x}dy-\frac{\partial v}{\partial y}dx$$
and  
$$\frac{\partial v}{\partial n}=v_x\cos(x,n)+v_y\cos(y,n),$$
where $(x,n)$ and $(y,n)$ denote the angle between the outward pointing normal and the positive $x$- and $y$-axis, respectively. Since the cosines of the angles between the tangent direction and the positive $x$- and $y$-axis are $\frac{dx}{ds}$ and $\frac{dy}{ds}$, where $s$ is the arc-length parameter,
and the outer normal is obtained by turning the tangent clockwise by $\frac{\pi}{2}$, it follows that
$$\cos(x,n)=\frac{dy}{ds},\quad \cos(y,n)=-\frac{dx}{ds}.$$
Hence
$$\frac{\partial v}{\partial n}ds=v_xdy-v_ydx=\widetilde{d}v. $$
\end{proof}

Because of Lemma \ref{lemma7}, in the following part, we assume that
 $\Omega$ is a bounded connected open subset of $\mathbb{C}$, which is bounded by $n$ analytic closed curves.

Let $u$ be a harmonic function on $\Omega$, and let  $z_0\in\Omega$. It follows from Lemma \ref{lemma8} that, for any $1\le l\le n-1$, there exists a closed smooth curve $\gamma_l$ in $\Omega$ which winds around points in $A_l$
once, does not wind around points in $A_j$ for $j\neq l$ and satisfies $z_0\not\in\gamma_l$.
\begin{Lemma}
\label{lemma2}
$(\chi_{z_0})^{k+1}=\chi_{-u}$ holds
if and only if 
$$\left(e^{i\int_{\gamma_j}\frac{\partial G_{\Omega}(z,z_0)}{\partial n}ds(z)}\right)^{k+1}=e^{-i\int_{\gamma_j}\frac{\partial u(z)}{\partial n}ds(z)}$$ holds for $1\le j\le n-1$. 
\end{Lemma}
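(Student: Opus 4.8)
The plan is to reduce the equality of characters to a finite check on a basis of the first homology group, and then evaluate each side using Proposition \ref{thm1} together with Lemma \ref{l:1}.

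First I would observe that $\chi_{z_0}$ and $\chi_{-u}$ are characters of $\pi_1(\Omega,z_1)$ valued in the unit circle $\{\lambda\in\mathbb{C}:|\lambda|=1\}$, which is an abelian group. Hence both $(\chi_{z_0})^{k+1}$ and $\chi_{-u}$ annihilate the commutator subgroup and therefore factor through the abelianization $H_1(\Omega)$. This is also transparent from Proposition \ref{thm1}: since $G_{\Omega}(\cdot,z_0)$ and $u$ are harmonic (away from $z_0$), the $1$-forms $\widetilde{d}G_{\Omega}(\cdot,z_0)$ and $\widetilde{d}u$ are closed, so the periods $\int_{\gamma}\widetilde{d}G_{\Omega}(\cdot,z_0)$ and $\int_{\gamma}\widetilde{d}u$ depend only on the homology class $[\gamma]\in H_1(\Omega)$. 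Consequently two such characters coincide on all of $\pi_1(\Omega,z_1)$ if and only if they agree on any generating set of $H_1(\Omega)$.

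Next I would invoke Lemma \ref{lemma8}, which guarantees that the classes $[\gamma_1],\dots,[\gamma_{n-1}]$ of the chosen loops (each $\gamma_j$ winding once around $A_j$ and not around $A_l$ for $l\neq j$) form a basis of $H_1(\Omega)$. By the previous paragraph, the identity $(\chi_{z_0})^{k+1}=\chi_{-u}$ is then equivalent to the finitely many equalities
$$
\left(\chi_{z_0}(\gamma_j)\right)^{k+1}=\chi_{-u}(\gamma_j),\qquad 1\le j\le n-1,
$$
where I use that each $\gamma_j$ can be arranged to avoid $z_0$ (as recorded just before the statement), so that Proposition \ref{thm1} applies to the element of $\pi_1(\Omega,z_1)$ represented by $\gamma_j$. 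Substituting the explicit formulas of Proposition \ref{thm1}, namely
$$
\chi_{z_0}(\gamma_j)=e^{i\int_{\gamma_j}\widetilde{d}G_{\Omega}(\cdot,z_0)},\qquad
\chi_{-u}(\gamma_j)=e^{-i\int_{\gamma_j}\widetilde{d}u}
$$
(the second obtained by applying the proposition to the harmonic function $-u$, i.e. using $\chi_{-u}=(\chi_u)^{-1}$), and then converting the period integrals via Lemma \ref{l:1}, $\widetilde{d}v=\frac{\partial v}{\partial n}\,ds$, turns the system above into precisely the asserted condition
$$
\left(e^{i\int_{\gamma_j}\frac{\partial G_{\Omega}(z,z_0)}{\partial n}ds(z)}\right)^{k+1}=e^{-i\int_{\gamma_j}\frac{\partial u(z)}{\partial n}ds(z)},\qquad 1\le j\le n-1.
$$

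The only genuinely delicate point is the reduction step: justifying that it suffices to test the characters on the homology basis rather than on all of $\pi_1(\Omega,z_1)$. This rests on the abelianity of the target $S^1$ together with the homology invariance of the period integrals, and on the statement of Lemma \ref{lemma8} that the $[\gamma_j]$ actually generate $H_1(\Omega)$. Everything else is a direct substitution, so beyond careful bookkeeping of the sign in the exponent coming from $\chi_{-u}=(\chi_u)^{-1}$ I expect no computational obstacle.
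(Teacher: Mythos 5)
Your proof is correct and reaches the statement by essentially the same reduction as the paper---test the two characters on the loops $\gamma_1,\dots,\gamma_{n-1}$ and then substitute the formulas of Proposition \ref{thm1} together with Lemma \ref{l:1}---but you justify the reduction step differently. The paper works in the punctured domain $\Omega\setminus\{z_0\}$, whose first homology has the extra generator $\gamma_0$ (a small loop around $z_0$), and disposes of that generator by the explicit computations $\int_{\gamma_0}\frac{\partial G_{\Omega}(z,z_0)}{\partial n}ds=2\pi$ and $\int_{\gamma_0}\frac{\partial u}{\partial n}ds=0$, so that the required identity on $\gamma_0$ holds automatically. You instead observe that $S^1$-valued characters kill the commutator subgroup and hence factor through $H_1(\Omega)$, which removes the puncture from the discussion entirely and makes the $\gamma_0$ computation unnecessary; combined with Lemma \ref{lemma8} this is a legitimate and slightly slicker route. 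One small imprecision: your secondary justification asserts that the period $\int_{\gamma}\widetilde{d}G_{\Omega}(\cdot,z_0)$ depends only on $[\gamma]\in H_1(\Omega)$. This is false as stated, since $\widetilde{d}G_{\Omega}(\cdot,z_0)$ is closed only on $\Omega\setminus\{z_0\}$, and the period jumps by $2\pi$ times the winding number about $z_0$ when $\gamma$ is replaced by a curve homologous to it in $\Omega$ but not in $\Omega\setminus\{z_0\}$; only the exponential $e^{i\int_{\gamma}\widetilde{d}G_{\Omega}(\cdot,z_0)}$, i.e.\ the character value itself, descends to $H_1(\Omega)$. Your primary group-theoretic justification is untouched by this, so the argument stands as written once that sentence is deleted or corrected.
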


\begin{proof}

It follows from Proposition \ref{thm1} that $(\chi_{z_0})^{k+1}=\chi_{-u}$ holds
if and only if 
$$\left(e^{i\int_{\gamma}\frac{\partial G_{\Omega}(z,z_0)}{\partial n}ds(z)}\right)^{k+1}=e^{-i\int_{\gamma}\frac{\partial u(z)}{\partial n}ds(z)}$$ holds for smooth closed curve $\gamma$ in $\Omega\backslash\{z_0\}$. 

Let $\gamma_0$ be a smooth closed curve in $\Omega\backslash\{z_0\}$ which winds around $z_0$ once and does not wind around points in $\mathbb{C}\backslash\Omega$. By Lemma \ref{lemma8}, the first homology group $H_1(\Omega\backslash\{z_0\})\cong \mathbb{Z}^{\oplus n}$, and the homology classes of $\gamma_j$ for $0\le j\le n-1$ form its basis. 
Hence for any smooth curve $\gamma\subset\Omega\backslash \{z_0\}$, there exists a sequence of integers $t_j$ for $0\le j\le n-1$ such that $\sum_{j=0}^{n-1}t_j \gamma_j$ is homologous to $\gamma$.

 Since $G_{\Omega}(z,z_0)$ and $u$ are harmonic on $\Omega\backslash\{z_0\}$,  $\frac{\partial G_{\Omega}(z,z_0)}{\partial n}ds$ and $\frac{\partial u(z)}{\partial n}ds$ are closed differential 1-forms on $\Omega\backslash\{z_0\}$ by Lemma \ref{l:1}. Then we have   
$$\int_{\gamma}\frac{\partial G_{\Omega}(z,z_0)}{\partial n}ds=\sum_{j=0}^{n-1}t_j\int_{\gamma_j}\frac{\partial G_{\Omega}(z,z_0)}{\partial n}ds$$
and
$$\int_{\gamma}\frac{\partial u(z)}{\partial n}ds=\sum_{j=0}^{n-1}t_j\int_{\gamma_j}\frac{\partial u(z)}{\partial n}ds.$$
Recall that $G_{\Omega}(z,z_0)=\log|z-z_0|+u'(z)$ on $\Omega$, where $u'$ is a harmonic function on $\Omega$, hence 
$$\int_{\gamma_0}\frac{\partial G_{\Omega}(z,z_0)}{\partial n}ds=\int_{\gamma_0}\frac{\partial \log|z-z_0|}{\partial n}ds+\int_{\gamma_0}\frac{\partial u'(z)}{\partial n}ds=2\pi$$
and
$$\int_{\gamma_0} \frac{\partial u(z)} {\partial n}ds=0.$$ 
Therefore we get that 
$$\left(e^{i\int_{\gamma}\frac{\partial G_{\Omega}(z,z_0)}{\partial n}ds(z)}\right)^{k+1}=e^{-i\int_{\gamma}\frac{\partial u(z)}{\partial n}ds(z)}$$ holds for smooth closed curve $\gamma$ in $\Omega\backslash\{z_0\}$ if and only if 
$$\left(e^{i\int_{\gamma_j}\frac{\partial G_{\Omega}(z,z_0)}{\partial n}ds}\right)^{k+1}=e^{-i\int_{\gamma_j}\frac{\partial u(z)}{\partial n}ds}$$
holds for $1\le j\le n-1$. 
Thus, Lemma \ref{lemma2} has been proved.
\end{proof}

\subsection{Harmonic functions and Dirichlet problem}\quad\par

Firstly, we recall two properties of harmonic functions on an open subset of $\mathbb{R}^2$.

\begin{Lemma}[Harnack's inequality, see \cite{evans}]\label{lemma3}

Let $V,U$ be two connected open subsets of $\mathbb{R}^2$ such that $V\Subset  U$, then there exists a positive constant $C$, which depends only on $V$ and $U$, such that 
$$\mathop{\sup}\limits_V u \le C\mathop{\inf}\limits_V u$$ 
for all nonnegative harmonic functions $u$ on $U$.
\end{Lemma}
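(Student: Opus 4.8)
The plan is to prove this classical estimate (Harnack's inequality) in two stages: a \emph{local} mean-value comparison valid on small balls, and a \emph{chaining} argument that spreads the local estimate over all of $V$ with a constant depending only on $V$ and $U$. Nonnegativity of $u$ is exactly what makes the comparison of integrals over nested balls monotone, so it enters at the very first step.

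For the local step, note that $\overline V$ is compact and contained in the open set $U$, so $d=\mathrm{dist}(\overline V,\partial U)>0$; fix $R=d/5$, so that $\overline{B(y,4R)}\subset U$ for every $y\in\overline V$. I would first show that whenever $\overline{B(y,4R)}\subset U$ and $p,q\in\overline{B(y,R)}$, one has $u(p)\le 9\,u(q)$. Indeed $|p-q|\le 2R$ forces $B(p,R)\subset B(q,3R)\subset B(y,4R)\subset U$, so the solid mean value property for harmonic functions together with $u\ge 0$ gives
$$u(p)=\frac{1}{|B(p,R)|}\int_{B(p,R)}u\le\frac{1}{|B(p,R)|}\int_{B(q,3R)}u=\frac{|B(q,3R)|}{|B(p,R)|}\,u(q)=3^{2}u(q),$$
the factor $3^{2}$ being the ratio of areas in $\mathbb{R}^2$.

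For the chaining step, cover the compact set $\overline V$ by finitely many balls $B(x_1,R),\dots,B(x_N,R)$ centered at points $x_i\in\overline V$; by construction $\overline{B(x_i,4R)}\subset U$ for each $i$, and the integer $N$ depends only on $V$ and $U$. It suffices to prove $u(p)\le C\,u(q)$ for all $p,q\in V$ with $C=C(V,U)$, since taking the supremum over $p$ and the infimum over $q$ then yields $\sup_V u\le C\inf_V u$. Fix $p,q\in V$. Since $V$ is open and connected it is path connected, so I would join $p$ to $q$ by a path in $V$; its image is compact and connected, hence can be covered by a subfamily of the $B(x_i,R)$ which, after discarding repetitions, forms a chain $B(x_{i_0},R),\dots,B(x_{i_m},R)$ with $p\in B(x_{i_0},R)$, $q\in B(x_{i_m},R)$, consecutive balls overlapping, and $m+1\le N$. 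Applying the local inequality at the start, at each overlap point, and at the end propagates the bound to $u(p)\le 9^{\,2N}u(q)$, so $C=9^{\,2N}$ works.

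The routine ingredients here are the mean-value inequality and the bookkeeping of the chain length. The one point deserving care is the \emph{uniformity} of $C$: fixing a single finite cover of $\overline V$ in advance makes both $N$ and $C$ independent of the harmonic function $u$ and of the chosen points $p,q$, because a minimal overlapping chain drawn from a fixed finite family of balls visits each ball at most once. The topological inputs that make this possible --- that a connected open subset of $\mathbb{R}^2$ is path connected, and that $\overline V$ is compact and connected --- are precisely what keep the chain finite and allow any two points of $V$ to be linked.
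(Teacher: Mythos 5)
The paper does not prove this lemma at all --- it is quoted as a known result with a citation to Evans, whose proof is exactly the mean-value-comparison-plus-chaining argument you give. Your proposal is correct (the local step $u(p)\le 3^2u(q)$ on overlapping balls and the finite chain over a fixed cover of $\overline V$ are both sound, and the non-optimal constant $9^{2N}$ is harmless), so it matches the intended source argument in all essentials.
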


\begin{Lemma}[see \cite{evans}]\label{lemma4}
 Assume $u$ is harmonic on $U\subset \mathbb{R}^n$. Then for any positive integer $k$, there exists a constant $C_k$ which is independent of $u$ and $U$, such that
$$|D^{\alpha}u(x_0)|\le \frac{C_k}{r^{2+k}}\vert u\vert _{L^1(B(x_0,r))} $$
for each ball $B(x_0,r)\subset U$ and each multi-index $\alpha$ of order $|\alpha|=k$.
\end{Lemma}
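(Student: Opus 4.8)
The plan is to prove the estimate by induction on the order $k=|\alpha|$, using only two elementary properties of harmonic functions: first, that every partial derivative of a harmonic function is again harmonic, so that each $D^{\beta}u$ inherits the mean value property; and second, the mean value formula itself. Throughout I will establish the dimension-$n$ estimate whose exponent is $r^{n+k}$ and then specialize to the planar case $n=2$ relevant to this paper, which turns $r^{n+k}$ into the stated $r^{2+k}$. The constant $C_k$ will be assembled from purely geometric quantities — the volume $\alpha(n)$ of the unit ball and the surface-area-to-volume ratio of spheres — so it will manifestly depend on $n$ and $k$ alone, and in particular not on $u$ or $U$.

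For the base case $k=0$ I would invoke the mean value formula $u(x_0)=\frac{1}{\alpha(n)r^{n}}\int_{B(x_0,r)}u\,dy$ and bound the right-hand side by $\frac{1}{\alpha(n)r^{n}}\vert u\vert_{L^1(B(x_0,r))}$, giving $C_0=\alpha(n)^{-1}$. For $k=1$, I would first note that $u_{x_i}$ is harmonic, apply the mean value formula to it on the smaller ball $B(x_0,r/2)$, and convert $\int_{B(x_0,r/2)}u_{x_i}\,dy$ into the boundary integral $\int_{\partial B(x_0,r/2)}u\,\nu_i\,dS$ by the divergence theorem. Estimating this by the surface measure of the sphere times $\sup_{\partial B(x_0,r/2)}|u|$ yields a bound of the form $\frac{2n}{r}\sup_{\partial B(x_0,r/2)}|u|$. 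The crucial step is then to pass from this sup bound back to an $L^1$ bound: for each $x\in\partial B(x_0,r/2)$ the ball $B(x,r/2)$ still lies inside $B(x_0,r)\subset U$, so the $k=0$ estimate applied on $B(x,r/2)$ gives $|u(x)|\le\frac{2^{n}}{\alpha(n)r^{n}}\vert u\vert_{L^1(B(x_0,r))}$; combining the two produces the $k=1$ bound with exponent $r^{n+1}$ and constant $C_1=2^{n+1}n/\alpha(n)$.

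For the inductive step, assuming the estimate for every ball in $U$ and every multi-index of order $\le k-1$, I would write $D^{\alpha}u=(D^{\beta}u)_{x_i}$ with $|\beta|=k-1$ and run the $k=1$ computation on the harmonic function $D^{\beta}u$ over the ball $B(x_0,r/k)$, obtaining $|D^{\alpha}u(x_0)|\le\frac{nk}{r}\sup_{\partial B(x_0,r/k)}|D^{\beta}u|$. Then for $x\in\partial B(x_0,r/k)$ the ball $B(x,\tfrac{k-1}{k}r)$ is contained in $B(x_0,r)$, so the induction hypothesis bounds $|D^{\beta}u(x)|$ in terms of $\vert u\vert_{L^1(B(x_0,r))}$ with a radius factor $(\tfrac{k-1}{k}r)^{n+k-1}$; multiplying the two estimates and simplifying the radius factors gives the bound at order $k$ with exponent $r^{n+k}$ and an explicit constant of the shape $C_k=(2^{n+1}nk)^{k}/\alpha(n)$.

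I do not expect any genuine obstacle here, since this is the classical interior-derivative estimate for harmonic functions; the only thing requiring care is the bookkeeping of the shrinking radii ($r/2$ in the base step, $r/k$ and $\tfrac{k-1}{k}r$ in the inductive step) so that the resulting constant absorbs cleanly and remains independent of $u$ and $U$. Specializing the final inequality to $n=2$ then yields exactly $|D^{\alpha}u(x_0)|\le\frac{C_k}{r^{2+k}}\vert u\vert_{L^1(B(x_0,r))}$, with $C_k$ depending only on $k$, as asserted.
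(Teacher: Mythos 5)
Your proof is correct and is precisely the classical induction argument from Evans (mean value property for the base case, harmonicity of derivatives plus the divergence theorem, and the shrinking radii $r/2$, $r/k$, $\tfrac{k-1}{k}r$), which is exactly the source the paper defers to — the paper states this lemma with the citation \cite{evans} and gives no proof of its own. Your specialization of the general exponent $r^{n+k}$ to $r^{2+k}$ for $n=2$ correctly recovers the form stated in the paper.
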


The following lemma will be used in the proof of Remark \ref{remark7}.

\begin{Lemma}
	\label{l:u1u2}Let $u_j$ be the harmonic function on a unit disc $\Delta\subset\mathbb{C}$, where $j=1,2$. Assume that $u_j$ is not a constant function for $j=1,2$, and $u_2\equiv c_1u_1+c_2$  does not hold for any constant $c_1,c_2$. Then there exists $z_0\in\Delta$ such that $u_1(z_0)\in\mathbb{Q}$ and $u_2(z_0)\in\mathbb{Q}$.
\end{Lemma}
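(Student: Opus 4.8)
The plan is to study the real-analytic map $\Phi=(u_1,u_2)\colon\Delta\to\mathbb{R}^2$ and to show that its image contains a nonempty open subset of $\mathbb{R}^2$. Once this is established, the density of $\mathbb{Q}^2$ in $\mathbb{R}^2$ immediately yields a point $(q_1,q_2)\in\Phi(\Delta)\cap\mathbb{Q}^2$, and any $z_0\in\Phi^{-1}(q_1,q_2)$ satisfies $u_1(z_0)=q_1\in\mathbb{Q}$ and $u_2(z_0)=q_2\in\mathbb{Q}$, which is the desired conclusion. By the inverse function theorem it suffices to find a single point $z_*\in\Delta$ at which the Jacobian determinant
\[
\det J_\Phi=\frac{\partial u_1}{\partial x}\frac{\partial u_2}{\partial y}-\frac{\partial u_1}{\partial y}\frac{\partial u_2}{\partial x}
\]
is nonzero; equivalently, it suffices to show that $\det J_\Phi$ does not vanish identically on $\Delta$.

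To analyze $\det J_\Phi$ I would pass to the complex derivatives $f_j:=\partial u_j/\partial z=\tfrac12(\partial_x u_j-i\partial_y u_j)$, which are holomorphic on $\Delta$ because each $u_j$ is harmonic. A direct computation gives the identity
\[
\det J_\Phi=4\,\mathrm{Im}\!\left(f_1\overline{f_2}\right),
\]
so the vanishing of the Jacobian is equivalent to $f_1\overline{f_2}$ being real-valued throughout $\Delta$.

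The heart of the argument, and the step I expect to be the main obstacle, is to rule out $\det J_\Phi\equiv 0$ using the hypothesis that $u_2\not\equiv c_1u_1+c_2$. Suppose for contradiction that $f_1\overline{f_2}$ is everywhere real. Since $u_1$ is non-constant, $f_1\not\equiv0$, so on the open set $U=\{f_1\neq0\}$ the ratio $g:=f_2/f_1$ is holomorphic and satisfies $f_1\overline{f_2}=|f_1|^2\,\overline{g}$; hence $g$ takes values in $\mathbb{R}$ on $U$. A holomorphic function with real values must be constant, so $g\equiv c$ for some real constant $c$, and by the identity theorem $f_2=cf_1$ on all of $\Delta$. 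This gives $\partial_z(u_2-cu_1)=0$, and since $u_2-cu_1$ is real-valued it must then be constant, i.e.\ $u_2=cu_1+c_2$, contradicting the hypothesis.

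Therefore $\det J_\Phi$ is not identically zero, and the conclusion follows from the inverse function theorem together with the density argument sketched above. The only delicate point is the translation of ``functional dependence of two harmonic functions'' into the clean holomorphic statement $f_2=cf_1$; framing everything through the complex derivative $\partial/\partial z$ is what makes this step short and avoids any local inversion of $u_1$.
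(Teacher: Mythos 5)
Your argument is correct, but it follows a genuinely different route from the paper's. The paper argues by contradiction directly with level sets: it takes a harmonic conjugate $v_1$ of $u_1$, shrinks $\Delta$ so that $f=u_1+iv_1$ is a coordinate, observes that if the conclusion failed then $u_2$ (continuous, avoiding the dense set $\mathbb{Q}$) would be constant on each connected level set $\{u_1=c\}$ with $c\in\mathbb{Q}$, deduces $\frac{\partial u_2}{\partial v_1}\equiv 0$ by density, and concludes that $u_2\circ f^{-1}$ depends only on $\operatorname{Re}z$, hence is affine in $u_1$. You instead use the Jacobian criterion: the identity $\det J_\Phi=4\operatorname{Im}\bigl(f_1\overline{f_2}\bigr)$ with $f_j=\partial u_j/\partial z$ is correct, and if it vanished identically then $g=f_2/f_1$ would be holomorphic and real-valued on the connected set $\{f_1\neq 0\}$ (you should note this set is $\Delta$ minus a discrete set, hence connected, so that $g$ is a single constant), forcing $f_2=cf_1$ and $u_2=cu_1+c_2$. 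Your version buys something: it avoids inverting $f$ and the connectedness-of-level-sets issue entirely, and via the inverse function theorem it shows the image of $(u_1,u_2)$ contains an open subset of $\mathbb{R}^2$, so the set of admissible $z_0$ is in fact dense near the nondegenerate point rather than merely nonempty. The paper's version is more hands-on but both arguments reduce to the same core fact that degeneracy of the pair $(u_1,u_2)$ forces an affine relation.
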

\begin{proof}
As $\Delta$ is simply connected, there exists a harmonic function $v_1$ on $\Delta$ such that $f:=u_1+iv_1$ is holomorphic on $\Delta$. As $u_1$ is not a constant function, shrinking $\Delta$ if necessary, we can assume that $f$ is a coordinate function on $\Delta$  without loss of generality. Denote that $h=u_2\circ f^{-1}$ on $f(\Delta)$, then $h$ is a harmonic function on $f(\Delta)$. 
	 
Now, we prove Lemma \ref{l:u1u2} by contradiction: if not, $u_2(z)\not\in\mathbb{Q}$ for any $z\in\Delta$ with $u_1(z)\in\mathbb{Q}$. As $u_2$ is continuous, we know $u_2\equiv const$ on $\{z\in\Delta:u_1(z)=c\}$, i.e., 
$$\frac{\partial u_2}{\partial v_1}=0$$
on $\{z\in\Delta:u_1(z)=c\}$, where $c\in\mathbb{Q}$. Note that $\cup_{c\in\mathbb{Q}}\{z\in\Delta:u_1(z)=c\}$ is dense in $\Delta$, then we have 
$\frac{\partial u_2}{\partial v_1}=0$
 on $\Delta$, which implies that $h(z)$ is only dependent on $\text{Re}z$. As $h$ is a harmonic function on $f(\Delta)$, there exist constants $c_1,c_2$ such that 
 $$u_2=h(u_1+iv_1)=c_1u_1+c_2,$$ which contradicts to the assumption in Lemma \ref{l:u1u2}.

Thus, Lemma \ref{l:u1u2} has been proved.
\end{proof}

We recall some results on Dirichlet problem, which will be used in the proofs of our main results.

 Dirichlet problem: 
\emph{Let $\Omega$ be an open subset of $\mathbb{R}^n$, and let $U(z)$ be a continuous function on $\partial \Omega$. Does there exist a continuous function $u$ on $\overline{\Omega}$, which is harmonic on $\Omega$ and satisfies $u|_{\partial\Omega}= U$?}

\

We recall the following definition. 
\begin{Definition}
Let $\Omega\subset\mathbb{R}^n$ be a bounded domain. A boundary point $z\in\partial\Omega$ is called regular, if there is a continuous function $\phi$ on $\overline\Omega$ such that $\phi$ is subharmonic on $\Omega$, $\phi(z)=0$ and $\phi<0$ on $\partial \Omega\backslash\{z\}$.
\end{Definition}

The following lemma gives a sufficient condition for solvability of Dirichlet problem.

\begin{Lemma}[Perron 1923, see \cite{OF81}]\label{lemma10}
Let $\Omega\subset\mathbb{R}^n$ be a bounded domain. Let $U$ be a function on $\partial\Omega$. Then there exists a harmonic function $u$ on $\Omega$, such that if $z\in\partial\Omega$ is a regular point and $U$ is continuous at $z$, $u(x)\rightarrow U(z)$ as $\Omega\ni x\rightarrow z$.
\end{Lemma}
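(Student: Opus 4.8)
The plan is to construct $u$ by the Perron method. Assuming $U$ is bounded (which suffices for every application in this paper, where $U$ takes only the values $0$ and $1$), set $M=\sup_{\partial\Omega}|U|$ and introduce the lower Perron family
$$\mathcal{L}_U=\left\{v:v\text{ is subharmonic on }\Omega,\ \limsup_{\Omega\ni x\to\zeta}v(x)\le U(\zeta)\ \text{for all }\zeta\in\partial\Omega\right\}.$$
This family is nonempty, since the constant $-M$ belongs to it, and every $v\in\mathcal{L}_U$ satisfies $v\le M$ on $\Omega$ by the maximum principle for subharmonic functions. I would then define
$$u(x):=\sup_{v\in\mathcal{L}_U}v(x),\qquad x\in\Omega,$$
so that $-M\le u\le M$, and the whole task reduces to showing (i) that $u$ is harmonic on $\Omega$, and (ii) that $u(x)\to U(z)$ as $x\to z$ at every regular point $z$ at which $U$ is continuous.

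For step (i), the key device is Poisson modification. Fix a ball $B=B(x_0,r)\Subset\Omega$. For $v\in\mathcal{L}_U$ let $v_B$ equal $v$ outside $B$ and equal the Poisson integral of $v|_{\partial B}$ inside $B$; a standard computation shows $v_B$ is again subharmonic, lies in $\mathcal{L}_U$, satisfies $v_B\ge v$, and is harmonic on $B$. Choosing $v_n\in\mathcal{L}_U$ with $v_n(x_0)\to u(x_0)$, replacing $v_n$ by $\max(v_1,\dots,v_n)$ and then Poisson-modifying on $B$, I obtain an increasing sequence of functions harmonic on $B$, bounded above by $M$, whose values at $x_0$ tend to $u(x_0)$. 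By Harnack's principle (a consequence of Lemma \ref{lemma3}), the increasing limit $h$ is harmonic on $B$, with $h\le u$ on $B$ and $h(x_0)=u(x_0)$. A further comparison argument—repeating the construction at a second point $x_1\in B$ to produce a harmonic $\tilde h\ge h$ with $\tilde h(x_0)=h(x_0)$, so that the nonnegative harmonic function $\tilde h-h$ vanishes at $x_0$ and hence vanishes identically by the strong maximum principle—forces $h\equiv u$ on $B$. Thus $u$ is harmonic on every such ball, and therefore on all of $\Omega$.

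For step (ii), I would run the barrier argument built on the function $\phi$ supplied by the definition of a regular point: $\phi$ is continuous on $\overline\Omega$, subharmonic on $\Omega$, with $\phi(z)=0$ and $\phi<0$ on $\partial\Omega\setminus\{z\}$. Given $\varepsilon>0$, continuity of $U$ at $z$ yields a neighborhood $V$ of $z$ with $|U-U(z)|<\varepsilon$ on $V\cap\partial\Omega$, while $\phi\le-\delta<0$ on the compact set $\partial\Omega\setminus V$. Choosing $A$ with $A\delta\ge 2M$, the function $w^-=U(z)-\varepsilon+A\phi$ lies in $\mathcal{L}_U$, whence $u\ge w^-$ and $\liminf_{x\to z}u(x)\ge U(z)-\varepsilon$; dually, $w^+=U(z)+\varepsilon-A\phi$ is superharmonic and dominates every member of $\mathcal{L}_U$ on $\partial\Omega$ in the $\limsup$ sense, so the maximum principle gives $u\le w^+$ and $\limsup_{x\to z}u(x)\le U(z)+\varepsilon$. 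Letting $\varepsilon\to0$ yields $u(x)\to U(z)$.

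I expect the genuine work to sit in step (i): verifying that Poisson modification preserves subharmonicity and membership in $\mathcal{L}_U$, and then upgrading ``$u$ agrees with a harmonic function at a single point via Harnack'' to ``$u$ is harmonic on the whole ball.'' The boundary step (ii) is a routine, if delicate, comparison once the barrier $\phi$ is in hand. I would also note that boundedness of $U$ enters only to guarantee a finite supremum; since all data $u_j$ arising here are indicator-type, this assumption costs nothing.
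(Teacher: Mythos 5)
The paper does not actually prove this lemma---it is recalled as a classical result with a citation to \cite{OF81}---so there is no in-paper argument to compare against. Your proposal is the standard Perron-method proof and it is essentially correct: the lower Perron family, Poisson modification plus Harnack to get harmonicity of the upper envelope, and the barrier comparison at a regular boundary point are exactly the classical ingredients, and the two-point trick for upgrading $h(x_0)=u(x_0)$ to $h\equiv u$ on the ball is the right way to close step (i). Two small remarks. First, you are right to impose boundedness of $U$: as literally stated the lemma would fail for unbounded boundary data (the upper envelope need not be finite), and the classical formulation, as well as every use in this paper (the $u_j$ take only the values $0$ and $1$), assumes $U$ bounded, so this restriction is appropriate rather than a gap. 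Second, in step (i) the inequality $\tilde h\ge h$ at the second point $x_1$ requires that the sequence producing $\tilde h$ incorporate (via maxima) the sequence producing $h$; you assert this implicitly, and it should be made explicit in a full write-up, but it is the standard device and causes no difficulty. With those caveats the argument is complete and consistent with the definition of regularity used in this paper (a global continuous subharmonic barrier on $\overline\Omega$).
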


In the following, assume that $\Omega\subset\mathbb{R}^2 $. The following lemma gives a  useful criterion for $p\in\partial\Omega$ to be regular.

\begin{Lemma}[Osgood 1900, see \cite{Ahlfors74}]
\label{lemma11}
 Let $\Omega\subset\mathbb{R}^2$ be a bounded domain. Suppose that $p\in\partial\Omega$ is contained in a component of $\mathbb{R}^2\backslash\Omega$ which has more than one point, that is, $p$ is not an isolated point in $\mathbb{R}^2\backslash\Omega$, then $p$ is regular.
\end{Lemma}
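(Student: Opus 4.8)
The plan is to verify the definition of regularity directly by exhibiting a barrier at $p$: a function $\phi$ that is continuous on $\overline\Omega$, subharmonic (in fact harmonic) on $\Omega$, vanishes at $p$, and is strictly negative on $\partial\Omega\setminus\{p\}$. Identifying $\mathbb{R}^2$ with $\mathbb{C}$, the mechanism I would use is a single-valued branch of the logarithm whose real part blows up to $-\infty$ exactly at $p$; taking $\mathrm{Re}$ of its reciprocal then produces a function that tends to $0$ at $p$ while staying negative elsewhere.

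For the construction, let $E$ be the connected component of $\mathbb{C}\setminus\Omega$ containing $p$; by hypothesis $E$ is a continuum (connected, more than one point), so I may fix a second point $q\in E$ with $q\neq p$. Let $\Omega_0$ be the connected component of $\mathbb{CP}^1\setminus E$ that contains $\Omega$. Since $E$ is connected and each remaining complementary component meets $E$ along its boundary, the set $\mathbb{CP}^1\setminus\Omega_0$ is connected, so $\Omega_0$ is simply connected. The rational function $\frac{z-p}{z-q}$ is holomorphic and nowhere zero on $\Omega_0$ — including at $\infty$, where its value is $1$ — so by simple connectivity it admits a single-valued holomorphic branch $g=\log\frac{z-p}{z-q}$ on $\Omega_0$; the two-point quotient is used precisely to cancel the monodromy around $p$ that would otherwise persist because $\infty$ may belong to $\Omega_0$. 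Since $\mathrm{Re}\,g=\log\bigl|\tfrac{z-p}{z-q}\bigr|\to-\infty$ as $z\to p$ and $\overline\Omega$ is bounded, I can fix a real constant $C$ with $\mathrm{Re}(g)-C<0$ throughout $\overline\Omega$ (it tends to $-\infty$ at $p$) and set
\[
\phi=\mathrm{Re}\,\frac{1}{\,g-C\,}=\frac{\mathrm{Re}(g)-C}{|g-C|^2}.
\]
Then $\phi$ is harmonic on $\Omega$, and $\phi<0$ wherever it is defined since the numerator is negative; moreover $|\phi|\le 1/|\mathrm{Re}(g)-C|\to 0$ as $z\to p$, so $\phi$ extends continuously with $\phi(p)=0$.

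The main obstacle is the verification of the barrier conditions at the remaining boundary points, namely those of $\partial\Omega$ lying on the continuum $E$, where the branch $g$ need not extend continuously and where the auxiliary point $q$ could even create a spurious second zero of $\phi$ if $q$ itself is a boundary point (as happens when $E$ is a slit). I expect to dispatch this by localizing: the estimate $|\phi|\le 1/|\mathrm{Re}(g)-C|$ shows $\phi\to 0$ uniformly near $p$, so the restriction of $\phi$ to $\Omega\cap B(p,\delta)$ is a genuine local barrier — negative and harmonic, tending to $0$ at $p$, and bounded away from $0$ on the inner arc $\Omega\cap\partial B(p,\delta)$ — and I would then invoke the standard principle that the existence of a local barrier at $p$ already forces $p$ to be regular (equivalently, that it can be patched into a global barrier of the required form). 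Thus the conceptual content sits entirely in the two structural steps, the simple connectivity of $\Omega_0$ and the resulting single-valued logarithm, while the delicate boundary bookkeeping is handled by working in a neighborhood of $p$ rather than on all of $\overline\Omega$.
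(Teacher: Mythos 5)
The paper does not actually prove this lemma --- it is quoted from Ahlfors--Sario as a classical fact --- so there is no internal proof to compare against; I am judging your argument on its own. Your construction is the standard Osgood--Lebesgue barrier, and its structural steps are sound: the component $\Omega_0$ of $\mathbb{CP}^1\setminus E$ containing $\Omega$ is simply connected because its complement is connected; the quotient $\frac{z-p}{z-q}$ is zero-free and pole-free there, so a single-valued branch $g$ of its logarithm exists (and you are right that the two-point quotient is what kills the period of $\log(z-p)$); and the bound $|\phi|\le 1/|\operatorname{Re}(g)-C|$ does force $\phi\to 0$ at $p$. You also correctly notice that the global choice of $C$ fails when $q\in\partial\Omega$ (slit case) and retreat to $\Omega\cap B(p,\delta)$.

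The gap is in the final step. Your claim that $\phi$ is ``bounded away from $0$ on the inner arc $\Omega\cap\partial B(p,\delta)$'' requires an upper bound on $|g-C|$ there, i.e.\ on $|\operatorname{Im}g|=|\arg\frac{z-p}{z-q}|$ for the chosen branch. You have a uniform bound on $\operatorname{Re}g$ on that set, but none on $\operatorname{Im}g$: when $E$ is a wild continuum (e.g.\ a spiral accumulating at $p$), $\Omega_0$ winds around $p$ unboundedly and the branch of the argument is unbounded near $p$; controlling it on the cut-off circle, where $\Omega\cap\partial B(p,\delta)$ may consist of many arcs lying on different ``sheets,'' is precisely the delicate point, and you give no argument. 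Without that bound, $\phi$ is only a \emph{weak} barrier ($\phi<0$ with $\phi\to 0$ at $p$, but possibly with $\sup\phi=0$ on the inner circle), the gluing $\max(\phi,-\epsilon)$ extended by $-\epsilon$ is no longer subharmonic across $\partial B(p,\delta)$, and the ``standard principle'' you invoke becomes Bouligand's lemma (a weak barrier already implies regularity, equivalently can be upgraded to a strong one). That is a genuine theorem with a nontrivial proof, not bookkeeping --- and it is needed in a strong form here, since the paper's definition of regularity demands a global $\phi\in C(\overline\Omega)$ that is strictly negative at \emph{every} boundary point other than $p$. Either prove the bound on $\operatorname{Im}g$ on the cut-off circle, or explicitly cite and state Bouligand's lemma and check that it delivers a barrier of the global form the paper requires.
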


\begin{Remark}
	\label{remark4}
	Let $\Omega$ be a bounded connected open subset of $\mathbb{C}$ such that $\mathbb{C}\backslash \Omega=\bigcup_{i=1}^{n}A_j$, where $A_j$ are disjoint connected closed sets and each $A_j$ is not an isolated point.
	Then by Lemma \ref{lemma10} and Lemma \ref{lemma11}, Dirichlet problem on such Riemann surface is always solvable.
\end{Remark}

Let us recall a basic formula, using that we can obtain the solution of  Dirichlet problem.

\begin{Lemma}[Green third formula, see \cite{nehari}]\label{lemma1}
Assume that $\Omega$ is bounded by closed smooth curve $\Gamma$. Let $u\in C(\overline{\Omega})$, which is  harmonic on $\Omega$.  Then we have
$$u(z_0)=\frac{1}{2\pi}\int_{\Gamma}u\frac{\partial G_{\Omega}(z,z_0)}{\partial n}ds.$$ 
\end{Lemma}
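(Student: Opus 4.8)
The plan is to obtain the identity from Green's second formula, exploiting that $G_{\Omega}(\cdot,z_0)$ vanishes on $\Gamma$ and carries a logarithmic pole at $z_0$; it is natural to package the computation through the operator $\widetilde d$ of Lemma \ref{l:1}. For harmonic functions $a,b$ on an open set a direct calculation gives
$$d\left(a\,\widetilde d b-b\,\widetilde d a\right)=da\wedge\widetilde d b-db\wedge\widetilde d a=0,$$
because $\widetilde d a$ and $\widetilde d b$ are closed when $a,b$ are harmonic and $da\wedge\widetilde d b=db\wedge\widetilde d a$. Consequently, since both $u$ and $G_{\Omega}(\cdot,z_0)$ are harmonic away from $z_0$, the $1$-form $\omega:=u\,\widetilde d G_{\Omega}(\cdot,z_0)-G_{\Omega}(\cdot,z_0)\,\widetilde d u$ is closed on $\Omega\setminus\{z_0\}$, and the whole argument rests on integrating $\omega$ and reading off its singular contribution at $z_0$.

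First I would avoid any regularity assumption on $u$ at $\Gamma$ by replacing $\Gamma$ with interior level curves. For small $t>0$ set $\Gamma_t:=\{G_{\Omega}(\cdot,z_0)=-t\}$; since $\Gamma$ is analytic and $G_{\Omega}(\cdot,z_0)<0$ inside with nonvanishing boundary normal derivative (Hopf lemma), $\Gamma_t$ is a smooth closed curve near $\Gamma$ bounding a region $\Omega_t\ni z_0$. Excising a small disc $D_{\epsilon}=\{|z-z_0|<\epsilon\}\Subset\Omega_t$ and applying Stokes' theorem to the closed form $\omega$ on the annular region $\Omega_t\setminus\overline{D_{\epsilon}}$, where $u$ and $G_{\Omega}(\cdot,z_0)$ are both smooth, gives $\int_{\Gamma_t}\omega=\int_{\partial D_{\epsilon}}\omega$ with $\partial D_{\epsilon}$ oriented counterclockwise. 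On $\Gamma_t$ one has $G_{\Omega}(\cdot,z_0)\equiv -t$, so $\omega=u\,\widetilde d G_{\Omega}(\cdot,z_0)+t\,\widetilde d u$; as $\widetilde d u$ is closed and $\Gamma_t$ is null-homotopic in $\Omega$, the second term integrates to $0$, leaving $\int_{\Gamma_t}u\,\widetilde d G_{\Omega}(\cdot,z_0)=\int_{\Gamma_t}u\frac{\partial G_{\Omega}(z,z_0)}{\partial n}ds$ by Lemma \ref{l:1}. For the right-hand side I would use the local form $G_{\Omega}(z,z_0)=\log|z-z_0|+u'(z)$ with $u'$ harmonic near $z_0$ together with the identity $\widetilde d\log|z-z_0|=d\theta$; then $\int_{\partial D_{\epsilon}}u\,\widetilde d\log|z-z_0|=\int_0^{2\pi}u(z_0+\epsilon e^{i\theta})\,d\theta\to 2\pi u(z_0)$ by continuity of $u$, while the remaining pieces $\int_{\partial D_{\epsilon}}u\,\widetilde d u'$ and $\int_{\partial D_{\epsilon}}G_{\Omega}(\cdot,z_0)\,\widetilde d u$ are $O(\epsilon)$ and $O(\epsilon\log\epsilon)$ respectively and vanish in the limit. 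Since the left-hand side is independent of $\epsilon$, this yields $\int_{\Gamma_t}u\frac{\partial G_{\Omega}(z,z_0)}{\partial n}ds=2\pi u(z_0)$ for every small $t>0$.

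The main obstacle is the final passage $t\to0$, i.e. showing $\int_{\Gamma_t}u\frac{\partial G_{\Omega}}{\partial n}ds\to\int_{\Gamma}u\frac{\partial G_{\Omega}}{\partial n}ds$, which is where the boundary regularity genuinely enters. Here I would use that $\Gamma$ is analytic: $G_{\Omega}(\cdot,z_0)$ extends harmonically across $\Gamma$ away from $z_0$ (so $\frac{\partial G_{\Omega}}{\partial n}$ is well-defined on $\Gamma$ and the target integral makes sense), the level curves $\Gamma_t$ converge to $\Gamma$ with their normal-derivative forms $\frac{\partial G_{\Omega}}{\partial n}ds$ converging uniformly, and $u\in C(\overline{\Omega})$ is uniformly continuous up to $\Gamma$; combining these gives the convergence of the boundary integrals and hence $u(z_0)=\frac{1}{2\pi}\int_{\Gamma}u\frac{\partial G_{\Omega}(z,z_0)}{\partial n}ds$. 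Everything outside this limiting step is the formal Green's-identity computation, so the analytic control of $G_{\Omega}$ near the analytic boundary is the one point that deserves care.
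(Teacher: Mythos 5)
Your argument is correct, but note that the paper does not prove this lemma at all: it is recalled as a classical fact with a pointer to Nehari's book, so the only meaningful comparison is with the standard textbook derivation. Your proof is exactly that derivation, packaged through the closed $1$-form $u\,\widetilde d G_{\Omega}(\cdot,z_0)-G_{\Omega}(\cdot,z_0)\,\widetilde d u$ (your computation that $d(a\,\widetilde d b-b\,\widetilde d a)=0$ for harmonic $a,b$ is right, as is the identification $\widetilde d\log|z-z_0|=d\theta$ and the $O(\epsilon)$, $O(\epsilon\log\epsilon)$ estimates for the remainder terms on $\partial D_{\epsilon}$), and the use of the level curves $\Gamma_t=\{G_{\Omega}(\cdot,z_0)=-t\}$ is a genuinely careful touch: it is what lets you get away with $u$ merely continuous on $\overline{\Omega}$ rather than $C^1$ up to the boundary, which many textbook statements implicitly assume. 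Two small points deserve adjustment. First, the lemma hypothesizes a \emph{smooth} curve $\Gamma$, not an analytic one, so in the final passage $t\to0$ you should appeal to smoothness of $G_{\Omega}(\cdot,z_0)$ up to $\overline{\Omega}\setminus\{z_0\}$ with nonvanishing boundary gradient (Hopf plus elliptic boundary regularity) rather than to harmonic continuation across $\Gamma$; in the paper's actual applications the boundary is analytic, so your stronger assumption costs nothing there. Second, your remark that $\int_{\Gamma_t}\widetilde d u=0$ because $\Gamma_t$ is null-homotopic is cleanest if justified by Stokes on the region $\Omega_t$ it bounds, where $u$ is harmonic throughout; this also covers the case where $\partial\Omega$ has several components and $\Gamma_t$ is a union of curves. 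Neither point is a gap in substance.
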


\section{Proofs of Theorem \ref{thm4} and Theorem \ref{remark6}}
In this section, we prove Theorem \ref{thm4} and Theorem \ref{remark6}.

\subsection{Proof of Theorem \ref{thm4}.}\quad\par
Suppose that $\Omega$ is conformally equivalent to unit disc less a (possible) closed set of inner capacity zero. For every $z_0\in\Omega$, $\chi_{z_0}=1$, hence $\chi_{z_0}^{k+1}=1$ for every integer $k\ge0$. If 
$v=\log|g|$, where $g\not\equiv0$ is a holomorphic function on $\Omega$, then by Theorem \ref{thm3}, 
$$(c_{\beta}(z_0))^{2(k+1)}=\left(\int_0^{+\infty}c(t)e^{-t}dt\right)\frac{\pi }{a}e^{-2v(z_0)}B_{\Omega,\rho_{z_0}}^{(k)}(z_0)$$ 
holds for every $z_0\in\Omega\backslash g^{-1}(0)$. 

On the other hand,  suppose that
$$(c_{\beta}(z_0))^{2(k+1)}=\left(\int_0^{+\infty}c(t)e^{-t}dt\right)\frac{\pi }{a}e^{-2v(z_0)}B_{\Omega,\rho_{z_0}}^{(k)}(z_0)$$
holds for every point $z_0\in\Omega$ such that $v(z_0)>-\infty$. Let $z_0'\in\Omega$ be a point such that $v(z_0')>-\infty$. By Theorem \ref{thm3},
 we have
$$v=\log|\tilde g|+u,$$ 
where $\tilde g\not\equiv0$ is a holomorphic function on $\Omega$ such that $\tilde g(z_0')\neq 0$, $u$ is a harmonic function on $\Omega$ and $(\chi_{z_0'})^{k+1}=\chi_{-u}.$ 
Notice that for every $z_0\in\Omega\backslash \tilde g^{-1}(0)$, we have $v(z_0)>-\infty$, hence 
\begin{equation}
	\label{eq:1109a}(\chi_{z_0})^{k+1}=\chi_{-u}
	\end{equation}
 for all such $z_0$.                                                                                                                                                                                                                       

Let $\gamma$ be any closed piecewise smooth curve in $\Omega$.
According to Proposition \ref{thm1} and equality \eqref{eq:1109a}, for every $z_0\in\Omega\backslash \gamma$ such that $\tilde g(z_0)\not=0$,
$$\left(e^{i\int_{\gamma}\widetilde{d}G_{\Omega}(\cdot,z_0)}\right)^{k+1}=e^{-i\int_{\gamma}\widetilde{d}u},$$
hence there exists an integer $j_{z_0}$ such that 
\begin{equation}
	\label{eq:1109b}\int_{\gamma}\widetilde{d}G_{\Omega}(\cdot,z_0)=\frac{1}{k+1}\left(2\pi j_{z_0}-\int_{\gamma}\widetilde{d}u\right).
\end{equation}
By Lemma \ref{lemma14} in Appendix, there exists a harmonic function $H$  on $\Delta$ such that
\begin{equation}\nonumber
\frac{1}{2\pi}\left(H(z)-\int_{\gamma}\widetilde{d}G_{\Omega}(\cdot,p(z))\right)\in\mathbb{Z}
\end{equation} for any $z\in \Delta\backslash p^{-1}(\gamma)$, where $p:\Delta\rightarrow\Omega$ is the universal covering.
It follows from equality \eqref{eq:1109b} that 
$$\frac{1}{2\pi}\left((k+1)H(z)+\int_{\gamma}\widetilde{d}u\right)\in\mathbb{Z}$$
for  any $z\in \Delta\backslash p^{-1}(\gamma)$, which shows that $H(z)$ is a constant function (the constant denote also by $H$) on $\Delta$. Then we get that 
 \begin{equation}
 	\label{eq:1118a}G_{\gamma}(z):=\int_{\gamma}\widetilde{d}G_{\Omega}(\cdot,z)=H+2m\pi
 \end{equation}
 is a constant function on any connected component of $\Omega\backslash \gamma$, where $m$ is a constant dependent on the selection  of the components. 

Fixing a point $z'\in \gamma$, let us consider the Green function $G_{\Omega}(z',\cdot)$. By the property of Green function that $\mathop{\sup}\limits_{\Omega}G_{\Omega}(z',\cdot)=0$, there exists a sequence of points $\{z_k\}\subset \Omega\backslash \gamma$ such that $\tilde g(z_k)\not=0$ for any $k$ and
$$\lim_{k\rightarrow +\infty}G_{\Omega}(z',z_k)\rightarrow 0.$$ 
Note that $\gamma$ is a compact subset of $\Omega\backslash\{z_k\}$, therefore it follows from Lemma \ref{lemma3} and  $G_{\Omega}(\cdot,z_k)$ are negative harmonic functions on $\Omega\backslash\{z_k\}$ that  $G_{\Omega}(z,z_k)\rightarrow 0$ uniformly on a neighborhood  of $\gamma$.
Note that $\widetilde d=\frac{\partial-\bar\partial}{\sqrt{-1}}$.  Using Lemma \ref{lemma4} and the dominated convergence theorem, we have 
 \begin{equation}
 	\label{1109c}\lim_{k\rightarrow+\infty}G_{\gamma}(z_k)=\int_{\gamma}\widetilde{d}G_{\Omega}(\cdot,z_k)=0.
 	 \end{equation}
Combining equality \eqref{eq:1118a} and \eqref{1109c}, we get that 
$$\frac{1}{2\pi}G_{\gamma}(z)\in\mathbb{Z}$$
 for any $z\in\Omega\backslash\gamma$.  For any $z_0\in\Omega$, note that any $\alpha\in\pi_1(\Omega,\tilde z_0)$ $(\tilde z_0\not=z_0)$ can be represented by a piecewise smooth curve $\gamma$ in $\Omega$ satsifying $z_0\not\in \gamma$. Thus, it follows from Proposition \ref{thm1} that 
$$\chi_{z_0}=1,$$
i.e.,  there exists a holomorphic function $\tilde f_{z_0}$ on $\Omega$ such that $|\tilde f_{z_0}|=e^{G_{\Omega}(\cdot,z_0)}$, which implies that $\Omega$ is conformally equivalent to the unit disc less a (possible) closed set of inner capacity zero (see \cite{Suita}). Using equality \eqref{eq:1109a}, we know that $\chi_u=1$, hence there exists a holomorphic function $g$ on $\Omega$ such that $v=\log|g|$.

Thus, Theorem \ref{thm4} holds.

\subsection{Proof of Theorem \ref{remark6}.}\label{proof2}\quad\par
For any $\alpha\in\Pi_1$, note that it can be represented by  a closed piecewise smooth curve $\gamma_{\alpha}$. Using Lemma \ref{lemma14}, there exists a harmonic function $\tilde u_{\alpha}$ on $\Delta$ such that
\begin{equation}\label{eq:1118b}
	\frac{1}{2\pi}\left(\tilde u_{\alpha}(z)-\int_{\gamma_{\alpha}}\widetilde{d}G_{\Omega}(\cdot,p(z))\right)\in\mathbb{Z}
\end{equation}
for any $z\in \Delta\backslash{p^{-1}(\gamma_{\alpha})}$.
Note that $d\widetilde{d}=2i\partial\bar\partial$ and $	\frac{1}{2\pi}\int_{\gamma_{0}}\widetilde{d}G_{\Omega}(\cdot,z_0)\in\mathbb{Z}$, where $\gamma_0\subset U_{z_0}\backslash\{z_0\}$ is a closed piecewise smooth curve and $U_{z_0}$ is a coordinate disc centered at $z_0$, hence we know that $\tilde u_{\alpha}$ is independent of the selection  of $\gamma_{\alpha}$. 
 Denote that 
 \begin{equation}
 	\label{eq:1118f}u_{\alpha}=\frac{1}{2\pi}\left((k+1)\tilde u_{\alpha}+\int_{\gamma_{\alpha}}\widetilde{d}u\right)
 \end{equation}
for any $\alpha\in \Pi_1,$ thus we have $\beta^*u_{\alpha}-u_{\alpha}$ is a integer constant function for any $\beta\in\pi_1(\Omega)$ by formula \eqref{eq:1118b}.  

Suppose that equality \eqref{eq:1118c} holds for $z_0\in \Omega$ with $v(z_0)>-\infty$. By Theorem \ref{thm3}, we know that
\begin{equation}\label{eq:1118d}
	\chi_{-u}=(\chi_{z_0})^{k+1}.
\end{equation} 
 Without loss of generality, assume that $z_0\not\in\gamma_{\alpha}$.  It follows from Proposition \ref{thm1}, equality \eqref{eq:1118d} and \eqref{eq:1118b} that 
\begin{equation}
	\nonumber
	\begin{split}
		u_{\alpha}(\tilde z_0)=&\frac{1}{2\pi}\left((k+1)\tilde u_{\alpha}(\tilde z_0)+\int_{\gamma_{\alpha}}\widetilde{d}u\right)\\
		=&\frac{1}{2\pi}\left((k+1)\tilde u_{\alpha}(\tilde z_0)-(k+1)\int_{\gamma_{\alpha}}\widetilde{d}G_{\Omega}(\cdot,z_0)\right)\\
		&+\left((k+1)\int_{\gamma_{\alpha}}\widetilde{d}G_{\Omega}(\cdot,z_0)+\int_{\gamma_{\alpha}}\widetilde{d}u\right)\\
		\in&\mathbb{Z},
	\end{split}
\end{equation}
where $\tilde z_0\in p^{-1}(z_0)$.

On the other hand, suppose that $u_{\alpha}(\tilde z_0)\in\mathbb{Z}$ for $\tilde z_0\in  p^{-1}(z_0),$ where $v(z_0)>-\infty$.  Without loss of generality, assume that $z_0\not\in\gamma_{\alpha}$. As 	$\frac{1}{2\pi}\left(\tilde u_{\alpha}(\tilde z_0)-\int_{\gamma_{\alpha}}\widetilde{d}G_{\Omega}(\cdot,z_0)\right)\in\mathbb{Z}$, it follows from Proposition \ref{thm1} and $u_{\alpha}(\tilde z_0)=\frac{1}{2\pi}\left((k+1)\tilde u_{\alpha}(\tilde z_0)+\int_{\gamma_{\alpha}}\widetilde{d}u\right)\in\mathbb{Z}$ that 
\begin{equation}
	\label{eq:1118e}\chi_{-u}(\alpha)=(\chi_{z_0}(\alpha))^{k+1}.
\end{equation}
As $\pi_1(\Omega)$ is generated by $\Pi_1$, equality \eqref{eq:1118e} shows that $(\chi_{z_0})^{k+1}=\chi_{-u}$, which implies that equality \eqref{eq:1118c} holds by using Theorem \ref{thm3}.

In the following part, we prove the characterization for $\{u_{\alpha}\}_{\alpha\in\Pi_1}$ being constant functions.

Firstly, suppose that $\Omega$ is conformally equivalent to the unit disc less a (possible) 
closed set of inner capacity zero and $v=\log|g|$, where $g\not\equiv0$ is a holomorphic function on $\Omega$. By formula \ref{eq:1118b}, we know $\tilde u_{\alpha}\in 2\pi\mathbb{Z}$. As $v=\log|g|=\log|\tilde g|+u$, we have $\chi_{u}=1$, which implies that $\int_{\gamma_{\alpha}}\widetilde{d}u\in2\pi\mathbb{Z}$ by Proposition \ref{thm1}. Thus, we know that $\{u_{\alpha}\}_{\alpha\in\Pi_1}$ are integer constant functions.

Now, assume that $\{u_{\alpha}\}_{\alpha\in\Pi_1}$ are integer constant functions. Then we have that equality \eqref{eq:1118c} holds for any $z\in\Omega$ with $v(z_0)>-\infty$. Then it follows from Theorem \ref{thm3} that $\Omega$ is conformally equivalent to the unit disc less a (possible) 
closed set of inner capacity zero and $v=\log|g|$, where $g\not\equiv0$ is a holomorphic function on $\Omega$.

Thus, Theorem \ref{remark6} has been proved.

\section{Proofs of Theorem \ref{thm5}, Corollary \ref{coro1} and Remark \ref{remark7}}

In this section, we prove Theorem \ref{thm5}, Corollary \ref{coro1} and Remark \ref{remark7}.

\subsection{Proof of Theorem \ref{thm5}.}\quad\par

Let $u$ be a harmonic function on $\Omega$.
By Lemma \ref{lemma2}, a point $z_0\in\Omega$ satisfies 
$$(\chi_{z_0})^{k+1}=\chi_{-u}$$ 
if and only if 
$$\left(e^{i\int_{\gamma_j}\frac{\partial G_{\Omega}(z,z_0)}{\partial n}ds(z)}\right)^{k+1}=e^{-i\int_{\gamma_j}\frac{\partial u(z)}{\partial n}ds(z)}$$ 
for $1\le j\le n-1$, that is, there exists some integers $l_j$ such that
\begin{equation}
	\label{eq:1118g}\int_{\gamma_j}\frac{\partial G_{\Omega}(z,z_0)}{\partial n}ds(z)=\frac{1}{k+1}\left(2\pi l_j-\int_{\gamma_j}\frac{\partial u(z)}{\partial n}ds(z)\right)
\end{equation} 
for $1\le j\le n-1$.

Since the boundary of $\Omega$ is analytic, $G_{\Omega}(z,z_0)$ can be extended to a harmonic function on $U\backslash\{z_0\}$, where $U$ is a neighborhood of $\bar{\Omega}$, and we can choose $\gamma_j$ to approximate $\Gamma_j$ arbitrarily. Hence 
$$\int_{\Gamma_j}\frac{\partial G_{\Omega}(z,z_0)}{\partial n}ds(z)=\int_{\gamma_j}\frac{\partial G_{\Omega}(z,z_0)}{\partial n}ds(z)$$ 
for arbitrary such $\gamma_j$. 
It follows from Remark \ref{remark4} and 
 Green third formula (see Lemma \ref{lemma1}) that  $$u_j(w)=\frac{1}{2\pi}\int_{\Gamma_j}\frac{\partial G(z,w)}{\partial n}ds(z)$$
 for any $1\le j\le n-1$ and any $w\in\Omega$, where $u_j$ is the solution of Dirichlet problem in Theorem \ref{thm5}. Using equality \eqref{eq:1118g}, we know that  a point $z_0\in\Omega$ satisfies 
 $$(\chi_{z_0})^{k+1}=\chi_{-u}$$ 
 if and only if 
 \begin{equation}
 	\label{eq:1118h}(k+1)u_j(z_0)+\frac{1}{2\pi}\int_{\gamma_j}\frac{\partial u(z)}{\partial n}ds(z)\in\mathbb{Z}
 \end{equation}
for any $1\le j\le n-1$.
Using Theorem \ref{thm3}, we get that Theorem \ref{thm5} holds.

\subsection{Proof of Corollary \ref{coro1}.}\quad\par
$u_j$ is the solution of Dirichlet problem in Theorem \ref{thm5} for any $1\le j\le n-1$. 
By Theorem \ref{thm5} (here we set $v=0$), if $z_0\in\Omega$ satisfies
$$(c_{\beta}(z_0))^{2(k+1)}=\left(\int_0^{+\infty}c(t)e^{-t}dt\right)\frac{\pi }{a}B_{\Omega,\rho_{z_0}}^{(k)}(z_0),$$ 
then there exists an integer $m_j$ such that
$$u_j(z_0)=\frac{m_j}{k+1}$$ 
for $1\le j\le n-1$. By the maximum principle, we have $m_j>0$ for any $1\le j\le n-1$. Denote that $u_n=1-\left(\sum_{j=1}^{n-1}u_j\right)\in C(\overline{\Omega})$, which is harmonic on $\Omega$.
By the maximum principle, it is clear that that $$u_n(z)\in[0,1]$$
 for any $z\in\overline{\Omega}$.
Note that $\sum_{j=1}^n u_j(z)=1$ on $\overline{\Omega}$. There exists a positive integer $m_n$ such that $u_n(z_0)=\frac{m_n}{k+1}$. Therefore we have
$$1=u_1(z_0)+u_2(z_0)+...+u_n(z_0)\ge\frac{n}{k+1},$$
which implies that $k\ge n-1$. Thus, Corollary \ref{coro1} holds.

\subsection{Proof of Remark \ref{remark7}.}\label{proof3}\quad\par
By solving Dirichlet Problem, there is $u_1\in C(\overline{\Omega})$ satisfying that $u_1$ is harmonic on $\Omega$, $u_1=1$ on $\Gamma_1$ and $u_1=0$ on $\Gamma_2\cup\Gamma_3$. Similarly,  
there is $u_2\in C(\overline{\Omega})$ satisfying that $u_2$ is harmonic on $\Omega$, $u_2=1$ on $\Gamma_2$ and $u_2=0$ on $\Gamma_1\cup\Gamma_3$.
It follows from Lemma \ref{l:u1u2} that there exists $z_0\in \Omega$ such that  $u_1(z_0)$ and $u_2(z_0)$ are rational numbers, then there exist a nonnegative integer $k$ such that $ku_1(z_0)$ and $ku_2(z_0)$ are both integers. By Theorem \ref{thm5}, we have
$$(c_{\beta}(z_0))^{2(k+1)}=\left(\int_0^{+\infty}c(t)e^{-t}dt\right)\frac{\pi }{a}B_{\Omega,\rho_{z_0}}^{(k)}(z_0).$$

\section{Appendix}
Let $\Omega$ be an open Riemann surface, which admits a nontrivial Green function $G_{\Omega}$. It is known that $G_{\Omega}(z,z')$ is a harmonic function with respect to $z\in\Omega\backslash\{z'\}$ when $z'\in\Omega$ is fixed and is a harmonic function with respect to $z'\in\Omega\backslash\{z\}$ when $z\in\Omega$ is fixed. From this, we can conclude that:

\begin{Lemma}
	\label{lemma12}
	$G_{\Omega}(z,z')$ is a smooth function on $(\Omega\times\Omega)\backslash Diag_{\Omega}$, where $Diag_{\Omega}=\{(z,z)|z\in\Omega\}$. 
\end{Lemma}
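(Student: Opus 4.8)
The plan is to reduce the statement to a local one and then exploit the fact, recorded just above the lemma, that $G_{\Omega}$ is \emph{separately harmonic}: harmonic in $z$ for fixed $z'$ and in $z'$ for fixed $z$. Since smoothness is a local property and $\Omega$ is a Riemann surface, I would fix a point $(z_0,z_0')$ with $z_0\neq z_0'$ and pass to coordinate charts, so that near $(z_0,z_0')$ the pair $(z,z')$ ranges in an open subset of $\mathbb{C}^2\cong\mathbb{R}^4$. I then choose $\rho>0$ so small that the closed coordinate discs $\overline{D(z_0,\rho)}$ and $\overline{D(z_0',\rho)}$ are disjoint and contained in charts; since the only singularity of $G_{\Omega}$ lies on the diagonal, $G_{\Omega}(\xi,\eta)$ is finite for every $(\xi,\eta)\in\overline{D(z_0,\rho)}\times\overline{D(z_0',\rho)}$.

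The first genuine step is to show that $G_{\Omega}$ is \emph{bounded} on this product of discs. Here I would use the normalization $\sup_{\Omega}G_{\Omega}(\cdot,z')=0$, so that $-G_{\Omega}\ge 0$, together with Harnack's inequality (Lemma \ref{lemma3}). For each fixed $\eta\in\overline{D(z_0',\rho)}$ the function $-G_{\Omega}(\cdot,\eta)$ is a nonnegative harmonic function on a fixed neighborhood $U$ with $\overline{D(z_0,\rho)}\Subset U$ and $\overline{U}\cap\overline{D(z_0',\rho)}=\emptyset$; applying Lemma \ref{lemma3} yields a constant $C$, independent of $\eta$, with $\sup_{\overline{D(z_0,\rho)}}\big(-G_{\Omega}(\cdot,\eta)\big)\le C\big(-G_{\Omega}(z_0,\eta)\big)$. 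Since $\eta\mapsto -G_{\Omega}(z_0,\eta)$ is harmonic, hence continuous and bounded on the compact disc $\overline{D(z_0',\rho)}$, this produces a uniform bound $0\ge G_{\Omega}\ge -M$ on $\overline{D(z_0,\rho)}\times\overline{D(z_0',\rho)}$. Separate continuity of $G_{\Omega}$ also renders it jointly Borel measurable, which I will need below.

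With boundedness in hand, I would represent $G_{\Omega}$ by a double Poisson integral. For fixed $z'\in D(z_0',\rho)$ the harmonic function $G_{\Omega}(\cdot,z')$ equals its Poisson integral over $\partial D(z_0,\rho)$, and for each $\xi\in\partial D(z_0,\rho)$ the harmonic function $G_{\Omega}(\xi,\cdot)$ equals its Poisson integral over $\partial D(z_0',\rho)$. Substituting one into the other and applying Fubini (legitimate by boundedness and joint measurability) gives, for $(z,z')\in D(z_0,\rho)\times D(z_0',\rho)$,
\begin{equation}
\nonumber
G_{\Omega}(z,z')=\int_{\partial D(z_0,\rho)}\int_{\partial D(z_0',\rho)}P_{\rho}(z,\xi)\,P_{\rho}(z',\eta)\,G_{\Omega}(\xi,\eta)\,|d\eta|\,|d\xi|,
\end{equation}
where $P_{\rho}$ denotes the Poisson kernel of the disc of radius $\rho$. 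The kernel $P_{\rho}(z,\xi)P_{\rho}(z',\eta)$ is jointly real-analytic in $(z,z')$ on the interior, with all derivatives bounded uniformly as $(\xi,\eta)$ range over the compact product of boundary circles; since $G_{\Omega}(\xi,\eta)$ is bounded and measurable there, differentiation under the integral sign is justified to all orders, and I conclude that $G_{\Omega}$ is $C^{\infty}$ (indeed real-analytic) near $(z_0,z_0')$.

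The hard part is not the final differentiation but securing the two hypotheses that make the double Poisson representation and the interchange of integrals legitimate, namely the uniform local boundedness of $G_{\Omega}$ off the diagonal and its joint measurability; once Harnack's inequality supplies the former and separate continuity the latter, smoothness follows by a routine differentiation under the integral sign. An equivalent route would be to observe that separate harmonicity gives $\Delta_{z}G_{\Omega}=\Delta_{z'}G_{\Omega}=0$, so that $(\Delta_{z}+\Delta_{z'})G_{\Omega}=0$ exhibits $G_{\Omega}$ as harmonic on $\mathbb{R}^4$ off the diagonal and hence real-analytic; but this phrasing still rests on the same local-boundedness input needed to pass from separate to joint harmonicity.
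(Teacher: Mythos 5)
Your proof is correct and follows essentially the same route as the paper's: both first establish local boundedness of $G_{\Omega}$ on a product of disjoint coordinate discs by applying Harnack's inequality (Lemma \ref{lemma3}) in one variable and continuity in the other, and then deduce joint smoothness by representing $G_{\Omega}$ as an integral of its (bounded, measurable) values against a jointly smooth kernel, differentiating under the integral sign. The only cosmetic difference is the choice of kernel --- the paper convolves with a radial mollifier in each variable via the mean value property, while you use a double Poisson integral over the boundary circles.
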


\begin{proof}
	For an arbitrary point $(z_0,z_0')\in(\Omega\times\Omega)\backslash Diag_{\Omega}$, choose local coordinate neighborhoods $V_{z_0}$ of $z_0$ and $V_{z_0'}$ of $z_0'$ such that $\overline{V}_{z_0}\cap\overline{V}_{z_0'}=\emptyset$. After identifying $V_{z_0}$ and $V_{z_0'}$ with their images under local coordinate maps, we may assume that $V_{z_0}$ and $V_{z_0'}$ are disjoint open subsets of $\mathbb{C}$.
	
	Let $\xi(z)=\exp\left(\frac{1}{|z|^2-1}\right)$ for $|z|<1$ and $\xi(z)=0$ for $|z|\ge1$, then $\xi(z)$ is a smooth function depending only on $|z|$ and $supp\xi=\{z\in C:|z|\le 1\}$. Let $\mu(z):=\xi(z)/\int_{\mathbb{C}}\xi(w)dV(w)$, where $dV$ denotes the Lebesgue measure. Denote that $$\mu_{\epsilon}(z):=\frac{1}{\epsilon^2}\mu\left(\frac{z}{\epsilon}\right),$$ where $0<\epsilon<1$, then $\mu_{\epsilon}(z)$ is a smooth function depending only on $|z|$, $supp \mu_{\epsilon}=\{z\in\mathbb{C}:|z|\le\epsilon\}$ and $\int_{\mathbb{C}}\mu_{\epsilon}(z)dV(z)=1$.  
	
	Let $z_1\in V_{z_0}$ be a fixed point, then $G_{\Omega}(z_1,z')$ is a continuous function with respect to $z'\in\overline{V}_{z_0'}$, hence $G_{\Omega}(z,z')$ is bounded on $\{z_1\}\times\overline{V}_{z_0'}$.
	For a fixed point $z_1'\in \overline{V}_{z_0'}$, $G_{\Omega}(z,z_1')$ is a negative harmonic function on a neighborhood of $\overline{V}_{z_0}$, $G_{\Omega}(z,z')$ is therefore bounded on $\overline{V}_{z_0}\times\overline{V}_{z_0'}$ by Lemma \ref{lemma3}. 
	Take $\epsilon<\frac{1}{2}\min\{dist(z_0,\partial{V_{z_0}}),dist(z_0',\partial{V_{z_0'}})\}$, then we have
	\begin{align}  
		G_{\Omega}(z,z')&=\int_{\mathbb{C}}G_{\Omega}(z-w,z')\mu_{\epsilon}(w)dV(w)\notag\\
		&=\int_{\mathbb{C}}\int_{\mathbb{C}}G_{\Omega}(z-w,z'-w')\mu_{\epsilon}(w)\mu_{\epsilon}(w')dV(w)dV(w')\notag\\
		&=\int_{\mathbb{C}}\int_{\mathbb{C}}G_{\Omega}(w,w')\mu_{\epsilon}(z-w)\mu_{\epsilon}(z'-w')dV(w)dV(w')\notag
	\end{align}
	for $(z,z')\in B(z_0,\epsilon)\times B(z_0',\epsilon)$, which implies that $G_{\Omega}(z,z')$ is smooth on a neighborhood of $(z_0,z_0')$. Since $(z_0,z_0')\in(\Omega\times\Omega)\backslash Diag_{\Omega}$ is arbitrarily chosen, $G_{\Omega}(z,z')$ is smooth on $(\Omega\times\Omega)\backslash Diag_{\Omega}$. 
\end{proof}

Let $w$ be a local coordinate on $U\subset\Omega$. The following lemma shows the smoothness of $G_{\Omega}(z,z')-\log|w(z)-w(z')|$ on $U\times U$.

\begin{Lemma}\label{l:1119}
	 $G_{\Omega}(z,z')-\log|w(z)-w(z')|$ can be extended to be a smooth function on $U\times U$.
\end{Lemma}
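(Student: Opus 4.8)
The plan is to write $G_{\Omega}(z,z')=\log|w(z)-w(z')|+F(z,z')$ and to show that the remainder $F$, a priori defined only off the diagonal, extends to a smooth function on $U\times U$. First I would dispose of the off-diagonal behaviour: since $w$ is a local coordinate, it is injective on $U$, so $(z,z')\mapsto\log|w(z)-w(z')|$ is smooth away from the diagonal $\{z=z'\}$, while $G_{\Omega}$ is smooth there by Lemma \ref{lemma12}. Hence $F$ is already smooth on $(U\times U)\backslash\{z=z'\}$, and the whole problem localizes near a diagonal point $(z_0,z_0)$.

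Next I would extend $F$ across the diagonal. For fixed $z'\in U$, the normalization of the Green function (the same normalization that makes the definition of $c_{\beta}$ meaningful) says that $z\mapsto G_{\Omega}(z,z')-\log|w(z)-w(z')|$ is harmonic on $U\backslash\{z'\}$ and bounded near $z'$; by the removable-singularity theorem for harmonic functions it extends to a function harmonic in $z$ on all of $U$. By the symmetry $G_{\Omega}(z,z')=G_{\Omega}(z',z)$ the same holds in the variable $z'$, so $F$ extends to a function on $U\times U$ that is harmonic in each variable separately.

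To upgrade separate harmonicity to joint smoothness I would reuse the double-mollification device from the proof of Lemma \ref{lemma12}. After identifying a coordinate bidisc around $(z_0,z_0)$ with an open subset of $\mathbb{C}\times\mathbb{C}$, I would first check that $F$ is locally bounded near the diagonal; this follows from Harnack's inequality (Lemma \ref{lemma3}) applied to the negative harmonic functions $G_{\Omega}(\cdot,z')$, which controls $G_{\Omega}(z,z')-\log|w(z)-w(z')|$ uniformly as $(z,z')$ ranges over a small neighborhood of $(z_0,z_0)$. With $\mu_{\epsilon}$ the radial mollifier used before, the mean-value property applied in each variable separately yields the representation
$$F(z,z')=\int_{\mathbb{C}}\int_{\mathbb{C}}F(w,w')\mu_{\epsilon}(z-w)\mu_{\epsilon}(z'-w')\,dV(w)\,dV(w')$$
on a neighborhood of $(z_0,z_0)$; differentiating under the integral sign shows that $F$ is $C^{\infty}$ there. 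Since $(z_0,z_0)$ is arbitrary, $F$ is smooth on all of $U\times U$.

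I expect the genuine obstacle to be the extension-and-boundedness step, that is, pinning down the precise logarithmic singularity of $G_{\Omega}$ along the diagonal and controlling the remainder uniformly as both arguments tend to a common point. Once separate harmonicity and local boundedness are in hand, the mollification argument is routine and identical in spirit to Lemma \ref{lemma12}.
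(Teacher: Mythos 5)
Your overall architecture --- separate harmonicity of the remainder via the removable singularity theorem and the symmetry of $G_{\Omega}$, then local boundedness near the diagonal, then the double-mollification device from Lemma \ref{lemma12} --- is exactly the paper's. The gap is in the one step you yourself flag as the obstacle: the uniform local boundedness of $F(z,z')=G_{\Omega}(z,z')-\log|w(z)-w(z')|$ near a diagonal point $(z_0,z_0)$. You propose to obtain it from Harnack's inequality applied to the negative harmonic functions $G_{\Omega}(\cdot,z')$, but this cannot work as stated: $G_{\Omega}(\cdot,z')$ is not harmonic at $z'$ (it tends to $-\infty$ there), so Harnack applies only on sets compactly contained in $\Omega\setminus\{z'\}$, and its constant degenerates precisely as $z$ approaches $z'$, which is the regime you must control. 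Harnack bounds the oscillation of a \emph{nonnegative harmonic} function; the function you actually need to bound is the extended remainder $F(\cdot,z')$, and before you can feed $M-F(\cdot,z')$ into Harnack you must first produce a one-sided bound $F\le M$ that is uniform in $z'$. Nothing in your write-up supplies that bound.

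The paper supplies it by domain monotonicity of the Green function: $G_{\Omega}(z,z')\le G_{\Delta}(w(z),w(z'))$ with the explicit unit-disc Green function, which yields
$F(z,z')\le -\tfrac{1}{2}\log\left(|w(z)-w(z')|^2+(1-|w(z)|^2)(1-|w(z')|^2)\right)$,
a smooth, hence locally bounded, majorant on $U\times U$. (An alternative elementary fix: since the extended $F(\cdot,z')$ is harmonic across $z'$ and $G_{\Omega}<0$, the maximum principle on the coordinate disc $\{|w(\cdot)-w(z')|<r\}$ gives $F(z,z')\le -\log r$ there, uniformly in $z'$.) Once the upper bound $F\le M$ is in hand, $M-F(\cdot,z')$ is a nonnegative harmonic function on a full neighborhood of $\overline{V}_{z_0}$, it is bounded at a reference point $z_1$ with $|w(z_1)-w(z')|$ bounded away from $0$ (where $G_{\Omega}$ is controlled by Lemma \ref{lemma12}), and only now does Harnack deliver the two-sided bound needed to justify the convolution representation. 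With that repaired, the rest of your argument goes through as written.
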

\begin{proof}
	Without loss of generality, we can assume that $w(U)\subset\mathbb{C}$ is unit disc $\Delta$.
	
	For each $z_0'\in U$, $G_{\Omega}(z,z_0')-\log|w(z)-w(z_0')|$ is a harmonic function on $U\backslash\{z_0'\}$ and is bounded near $z_0'$, hence can be extended to be a harmonic function on $U$. Since $z_0'\in\Delta$ is arbitrarily chosen, $G_{\Omega}(z,z')-\log|w(z)-w(z')|$ can be
	extended to be a function on $U\times U$ such that $G_{\Omega}(z,z_0')-\log|w(z)-w(z_0')|$ is a harmonic function on $U$ for any fixed $z_0'\in U$. Denote that 
	$$H(z,z'):=G_{\Omega}(z,z')-\log|w(z)-w(z')|.$$
	
Notice it that $G_{\Omega}(z,z')$ and $\log|w(z)-w(z')|$ are symmetric with respect to its two variables, then we know that
	$G_{\Omega}(z_0,z)-\log|w(z_0)-w(z)|$ is also a harmonic function on $U$ for any fixed $z_0\in U$.
	
	Recall that for unit disc, the Green function is 
	$$G_{\Delta}(z,z')=\frac{1}{2}\log\frac{|z-z'|^2}{|z-z'|^2+(1-|z|^2)(1-|z'|^2)}$$ 
	(see \cite{demailly1}). Note that $G_{\Omega}(z,z')<0$, then by the property of Green function, we have
	$$G_{\Omega}(z,z')\le G_{\Delta}(w(z),w(z'))$$ 
	on $U \times U$, which implies that 
	$$G_{\Omega}(z,z')-\log|w(z)-w(z')|+\frac{1}{2}\log\left(|w(z)-w(z')|^2+(1-|w(z)|^2)(1-|w(z')|^2)\right)\le0$$
	on $U \times U$. As $\frac{1}{2}\log\left(|w(z)-w(z')|^2+(1-|w(z)|^2)(1-|w(z')|^2)\right)$ is smooth on $U\times U$, we have $H(z,z')$ is bounded from above on $\overline{V}_{z_0}\times \overline{V}_{z_0}$, where $z_0$ is any point in $U$ and $V_{z_0}\Subset U$ is a neighborhood of $z_0$.  
	For each fixed $z_0'\in U$, $H(z,z_0')$ is a harmonic function on a neighborhood of $\overline{V}_{z_0}$, $H(z,z')$ is therefore bounded on $\overline{V}_{z_0}\times\overline{V}_{z_0}$ by Lemma \ref{lemma3}. 
	By the same convolution method in Lemma \ref{lemma12}, we get that
	 $H(z,z')$ is a smooth function on a neighborhood of $(z_0,z_0)$. Since  smoothness is a local property, Lemma \ref{l:1119} has been proved.
\end{proof}

 Let $\gamma$ be a piecewise smooth closed curve in $\Omega$. Using Lemma \ref{lemma12}, we can get the following lemma. 

\begin{Lemma}
	\label{lemma9}
	$\int_{\gamma}\widetilde{d}G_{\Omega}(\cdot,z')$ is a harmonic function with respect to $z'$ on $\Omega\backslash  \gamma$, where $\widetilde{d}=\frac{\partial-\bar\partial}{i}$.
\end{Lemma}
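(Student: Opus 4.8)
The plan is to fix a target point $z_0'\in\Omega\backslash\gamma$, work in a coordinate chart around it, and differentiate the integral $\int_{\gamma}\widetilde{d}G_{\Omega}(\cdot,z')$ under the integral sign in the variable $z'$, thereby reducing the harmonicity of $z'\mapsto\int_{\gamma}\widetilde{d}G_{\Omega}(\cdot,z')$ to the harmonicity of $G_{\Omega}(z,\cdot)$ in its second variable, which holds away from the diagonal. The only analytic input beyond this formal interchange is the joint smoothness of $G_{\Omega}$ off the diagonal, supplied by Lemma \ref{lemma12}, together with the compactness of $\gamma$.

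First I would choose a coordinate neighborhood $(W,w')$ of $z_0'$ whose closure is disjoint from $\gamma$; this is possible since $\gamma$ is a compact subset of $\Omega$ and $z_0'\notin\gamma$. Recall that harmonicity of a function $\phi$ on $W$ is equivalent to $\partial_{w'}\partial_{\bar{w}'}\phi=0$. Next I would cover $\gamma$ by finitely many coordinate patches $V_1,\dots,V_N$ with coordinates $z_1,\dots,z_N$ and decompose $\gamma$ into arcs $\gamma_j\subset V_j$, exactly as in the proof of Lemma \ref{lemma6}. On each $V_j$ the $1$-form $\widetilde{d}G_{\Omega}(\cdot,z')$ equals $\frac{1}{i}\left(\partial_{z_j}G_{\Omega}\,dz_j-\partial_{\bar{z}_j}G_{\Omega}\,d\bar{z}_j\right)$, so after parametrizing $\gamma_j$ by $t\mapsto z_j(t)$ on $[0,1]$ the contribution $\int_{\gamma_j}\widetilde{d}G_{\Omega}(\cdot,z')$ becomes an ordinary integral over $[0,1]$ whose integrand is, by Lemma \ref{lemma12}, smooth in $(t,z')$ on $[0,1]\times\overline{W}$ (since $\overline{V}_j\cap\overline{W}=\emptyset$, the diagonal is avoided).

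The main step is then to apply $\partial_{w'}\partial_{\bar{w}'}$ in the variable $z'$ and move it inside each integral. Differentiation under the integral sign is legitimate because the integrands and their $z'$-derivatives are continuous on the compact set $[0,1]\times\overline{W}$. Since $\partial_{w'}\partial_{\bar{w}'}$ acts only on the second variable while $\widetilde{d}$ acts only on the first, the two operators commute, so the integrand of $\partial_{w'}\partial_{\bar{w}'}\int_{\gamma_j}\widetilde{d}G_{\Omega}(\cdot,z')$ is obtained by applying $\widetilde{d}$ in $z$ to $\partial_{w'}\partial_{\bar{w}'}G_{\Omega}(z,z')$. But for $z\in\gamma$ and $z'\in W$ we have $z\neq z'$, so $G_{\Omega}(z,\cdot)$ is harmonic and $\partial_{w'}\partial_{\bar{w}'}G_{\Omega}(z,z')=0$. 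Hence every summand vanishes and $\partial_{w'}\partial_{\bar{w}'}\left(\int_{\gamma}\widetilde{d}G_{\Omega}(\cdot,z')\right)=0$ on $W$. As $z_0'\in\Omega\backslash\gamma$ was arbitrary, this shows that $\int_{\gamma}\widetilde{d}G_{\Omega}(\cdot,z')$ is harmonic in $z'$ on all of $\Omega\backslash\gamma$.

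I do not expect a genuine obstacle: the content is the interchange of $\partial_{w'}\partial_{\bar{w}'}$ with $\int_{\gamma}$, which is routine once Lemma \ref{lemma12} guarantees joint smoothness on a neighborhood of $\gamma\times\overline{W}$. The one point requiring care is coordinate-independence — one must observe that the conclusion does not depend on the chart $(W,w')$ chosen to write the Laplacian, which is immediate because harmonicity on a Riemann surface is a chart-independent notion, and that the patchwise expressions of $\widetilde{d}G_{\Omega}(\cdot,z')$ glue to the globally defined $1$-form, which holds because $\widetilde{d}$ is a coordinate-free operator on functions.
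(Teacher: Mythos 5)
Your proof is correct and follows exactly the route the paper intends: the paper states Lemma \ref{lemma9} with no written proof beyond the remark that it follows from the joint smoothness of $G_{\Omega}$ off the diagonal (Lemma \ref{lemma12}), and your argument supplies precisely the omitted details — localizing $z'$ away from the compact curve $\gamma$, decomposing $\gamma$ into arcs in coordinate patches, and differentiating under the integral sign to reduce to the harmonicity of $G_{\Omega}(z,\cdot)$ in its second variable. No gaps.
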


In the following, we discuss the harmonic function $\int_{\gamma}\widetilde{d}G_{\Omega}(\cdot,z')$.
The following lemma shows that we only need to consider the curves which intersect with themselves at finite many points.

\begin{Lemma}
	\label{lemma13}
	Suppose that  $\gamma$ is a piecewise smooth closed curve in $\Omega$. Then there exists a piecewise smooth closed curve $\tilde\gamma$ which is homotopic to $\gamma$ in $\Omega$, and $\tilde\gamma$ intersects with itself at finite many points.
\end{Lemma}

\begin{proof}
	$\gamma:[0,1]\rightarrow \Omega\backslash\{z_1\}$ is a piecewise smooth map, hence $[0,1]$ can be divided into finite many sections $I_i=[x_i-1,x_i]$, $1\le i\le N$, such that each $\gamma|_{I_i}$ is contained in a simply connected coordinate neighborhood $V_i$ in $\Omega$.
	
	Now construct a smooth curve $\tilde\gamma:[0,1]\rightarrow \Omega\backslash\{z_1\}$ as follows: $\tilde\gamma|_{I_1}:I_1\rightarrow V_1$ is a smooth mapping such that $\tilde\gamma(x_0)=\gamma(x_0)$, $\tilde\gamma(x_1)=\gamma(x_1)$ and $\tilde\gamma|_{I_1}$ does not intersect with itself unless $\gamma(x_0)=\gamma(x_1)$; 
	Suppose that $\tilde\gamma|_{I_i}$ has been defined for $1\le i\le k-1$, $k\le N$, then define $\tilde\gamma|_{I_k}:I_k\rightarrow V_k$ to be a smooth mapping such that $\tilde\gamma(x_{k-1})=\gamma(x_{k-1})$, $\tilde\gamma(x_k)=\gamma(x_k)$ and 
	$\tilde\gamma|_{I_k}$ intersects with $\tilde\gamma|_{I_i}$ at finite many points for $1\le i\le k$.
	
	 Since $V_i$ is simply connected, $\tilde\gamma|_{I_i}$ is homotopic to $\gamma|_{I_i}$ for any $1\le i\le N$. Thus, 
	$\tilde\gamma$ is homotopic to $\gamma$ and intersects with itself at finite many points.
\end{proof}

\begin{Lemma}
	\label{lemma14}
	Suppose that $\gamma$ is a piecewise smooth closed curve. Then there exists a harmonic function $H(z')$ on $\Delta$ such that $H(z')=\int_{\gamma}\widetilde{d} G_{\Omega}(\cdot,p(z'))+2k\pi$ for $z'\in\Delta\backslash p^{-1}(\gamma)$, where $k$ is an integer depending on $z'$ and $p:\Delta\rightarrow\Omega$ is the universal covering from unit disc $\Delta$ to $\Omega$.
\end{Lemma}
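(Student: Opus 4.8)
The plan is to study the single-valued function
$F(z'):=\int_{\gamma}\widetilde{d}G_{\Omega}(\cdot,z')$ on $\Omega\setminus\gamma$, which is harmonic there by Lemma \ref{lemma9}, and to show that its only obstruction to being globally harmonic is a family of jumps lying in $2\pi\mathbb{Z}$ across $\gamma$; passing to the simply connected cover $\Delta$ then lets one absorb these jumps into a single-valued harmonic function. By Lemma \ref{lemma13} I may assume that $\gamma$ has only finitely many self-intersections, so that away from finitely many points $\gamma$ is an embedded real-analytic arc and $p^{-1}(\gamma)$ is a locally finite union of real-analytic arcs meeting at isolated points.

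First I would analyze $F$ near a smooth point $q\in\gamma$. Choosing a coordinate chart $U\ni q$ and splitting $\gamma=\gamma_1\cup\gamma_2$ with $\gamma_1\subset U$ the portion near $q$ and $\gamma_2$ bounded away from $q$, the integral $\int_{\gamma_2}\widetilde{d}G_{\Omega}(\cdot,z')$ is smooth and harmonic in $z'$ near $q$ by Lemma \ref{lemma12}. For the remaining piece I would invoke Lemma \ref{l:1119} to write $G_{\Omega}(z,z')=\log|w(z)-w(z')|+H_0(z,z')$ with $H_0$ smooth on $U\times U$ and harmonic in $z'$; then $\int_{\gamma_1}\widetilde{d}_zH_0(z,z')$ is a smooth harmonic function of $z'$ (differentiate under the integral sign and use $\Delta_{z'}H_0=0$), while by the computation $\widetilde{d}\log|w-a|=\omega_a$ of Remark \ref{remark5} the term $\int_{\gamma_1}\widetilde{d}_z\log|w(z)-w(z')|$ equals the angle subtended by $\gamma_1$ and jumps by exactly $2\pi$ as $z'$ crosses the arc. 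Hence near $q$ one has $F=2\pi W+S$, where $W$ is locally constant and integer-valued and $S$ is smooth and harmonic across $\gamma$; in particular $F$ jumps across $\gamma$ only by integer multiples of $2\pi$, so since $F$ is real the function $\Phi:=e^{iF}$ extends from $\Omega\setminus\gamma$ to a continuous (indeed real-analytic) unimodular function on all of $\Omega$.

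Next I would pass to the universal cover. The pullback $\Phi\circ p$ is a continuous unimodular function on the simply connected disc $\Delta$, hence admits a continuous logarithm: there is a continuous real function $H$ on $\Delta$ with $e^{iH}=\Phi\circ p=e^{iF\circ p}$, i.e.\ $H=F\circ p+2\pi k$ on $\Delta\setminus p^{-1}(\gamma)$ for a locally constant integer-valued $k$. This is exactly the asserted identity, provided $H$ is harmonic. On $\Delta\setminus p^{-1}(\gamma)$ this is immediate, since $F$ is harmonic there and $p$ is holomorphic. Near a smooth arc of $p^{-1}(\gamma)$ the correction $k$ cancels the jump of the integer-valued part $W$, so locally $H=S\circ p+\text{const}$, which is harmonic because $S$ is harmonic and $p$ is holomorphic. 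Finally, at the isolated points where arcs of $p^{-1}(\gamma)$ meet, $H$ is continuous and harmonic on a punctured neighborhood, hence harmonic there by removability of isolated singularities of bounded harmonic functions.

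The main obstacle I anticipate is precisely the harmonicity of $H$ across $p^{-1}(\gamma)$: a merely continuous function that is harmonic off a curve need not be harmonic across it (for instance $|x|$ across $\{x=0\}$), so the argument genuinely needs the local splitting $F=2\pi W+S$ furnished by Lemma \ref{l:1119}, together with the fact that $W$ is integer-valued, so that the $2\pi k$ correction removes exactly the non-smooth part and leaves $S\circ p$ plus a constant. The role of the simple connectivity of $\Delta$ is to make the global continuous logarithm $H$ exist despite the possibly nontrivial monodromy of $\Phi$ on $\Omega$.
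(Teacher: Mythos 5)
Your proof is correct, but it takes a genuinely different route from the paper's. The paper proceeds by induction on the number of self-intersection points of $\gamma$: for a simple closed curve it splits into two cases according to whether $\Omega\setminus\gamma$ disconnects, extends $h$ across $\gamma$ in the separating case by locally deforming a segment of $\gamma$ to one side, and in the non-separating case extends the differential $d\bigl(\int_{\gamma}\widetilde{d}G_{\Omega}(\cdot,z')\bigr)$ to a closed form on $\Omega$, integrates its pullback on the simply connected $\Delta$, and then verifies the $2\pi k$ discrepancy by explicitly counting signed crossings of an auxiliary loop with $\gamma$; the inductive step splits $\gamma$ at its first self-intersection and adds the two resulting functions. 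You instead exploit the local singularity structure of the Green function (Lemma \ref{l:1119}) to show directly that $F=\int_{\gamma}\widetilde{d}G_{\Omega}(\cdot,z')$ decomposes locally as a harmonic function plus $2\pi$ times an integer-valued locally constant function, so that $e^{iF}$ extends real-analytically to all of $\Omega$, and then obtain $H$ as a continuous (hence, by the local splitting and removability at the finitely many corners and self-intersections, harmonic) logarithm on the simply connected cover. This avoids the induction and the connected/disconnected case analysis entirely and makes the source of the $2\pi\mathbb{Z}$ ambiguity transparent; the price is that you lean on the classical jump relation for $\int_{\gamma_1}d\arg(\zeta-a)$ across an embedded arc (the double-layer-potential jump), which you assert rather than prove --- it is standard (close the contour and apply the residue theorem), but it is the one step you should make explicit. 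Note that you still need Lemma \ref{lemma13} exactly as the paper does, to ensure the exceptional set where the local splitting fails is discrete.
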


\begin{proof}
	Note that $d\widetilde{d}=2i\partial\bar\partial$ and $\int_{\gamma_{0}}\widetilde{d}G_{\Omega}(\cdot,z_0)\in2\pi\mathbb{Z}$, where $\gamma_0\subset U_{z_0}\backslash\{z_0\}$ is a closed piecewise smooth curve and $U_{z_0}$ is a coordinate disc centered on $z_0\in\Omega$. By Lemma \ref{lemma13}, we only need to consider  piecewise smooth closed curve $\gamma$ which intersects with itself at finite many points.
	
	Assume that $\gamma$ intersects with itself at $n$ points, and we prove this Lemma by induction on $n$. 
	
	When $n=1$, $\gamma$ is a simple closed curve. Firstly, suppose that $\Omega\backslash \gamma$ is not connected. Note that $\Omega$ is orientable, then we denote the two connected components of $\Omega\backslash\gamma$ by $\Omega_1$ and $\Omega_2$, where $\Omega_1$ is left to $\gamma$ and
	$\Omega_2$ is right to $\gamma$. Define
	$$h(z')=\int_{\gamma}\widetilde{d} G_{\Omega}(\cdot,z')$$
	for $z'\in\Omega_1$ and
	$$h(z')=\int_{\gamma}\widetilde{d} G_{\Omega}(\cdot,z')+2\pi$$
	for $z'\in\Omega_2$. $h(z')$ is a harmonic function on $\Omega\backslash\gamma$ by Lemma \ref{lemma9}. For any point $x_0\in \gamma$, choose a local coordinate neighborhood $V_{x_0}$ of $x_0$ and a segment $\gamma_1$ of $\gamma$ such that $\gamma_1\subset V_{x_0}$. Assume that $\gamma=\beta_1\gamma_1\beta_2$. 
	Construct a smooth curve $\gamma_2$ in $V_{x_0}\cap \Omega_2$ such that the starting point and end point of $\gamma_2$ coincide with $\gamma_1$. Denote the open set in $V_{x_0}\cap\Omega_2$ bounded by $\gamma_2\left(\gamma_1\right)^{-1}$ by $G$, where $\left(\gamma_1\right)^{-1}(t)=\gamma_1(1-t)$ on $[0,1]$. Note that $\tilde\gamma:=\beta_1\gamma_2\beta_2$ is a piecewise smooth closed curve in $\Omega$, then
	$\int_{\tilde{\gamma}}\widetilde{d} G_{\Omega}(\cdot,z')$ is a harmonic function on $\Omega_1\cup\gamma_1\cup G$, which is a connected component of $\Omega\backslash\tilde{\gamma}$.
	For $z'\in\Omega_1$, we have
	\begin{align} 
		\int_{\tilde{\gamma}}\widetilde{d} G_{\Omega}(\cdot,z')&=\int_{\gamma}\widetilde{d} G_{\Omega}(\cdot,z')+\int_{\gamma_2\left(\gamma_1\right)^{-1}}\widetilde{d} G_{\Omega}(\cdot,z')\notag\\
		&=\int_{\gamma}\widetilde{d} G_{\Omega}(\cdot,z')\notag\\
		&=h(z')\notag.
	\end{align}
	For $z'\in G\subset\Omega_2$, we have
	\begin{align} 
		\int_{\tilde{\gamma}}\widetilde{d} G_{\Omega}(\cdot,z')&=\int_{\gamma}\widetilde{d} G_{\Omega}(\cdot,z')+\int_{\gamma_2\left(\gamma_1\right)^{-1}}\widetilde{d} G_{\Omega}(\cdot,z')\notag\\
		&=\int_{\gamma}\widetilde{d} G_{\Omega}(\cdot,z')+2\pi\notag\\
		&=h(z')\notag.
	\end{align}
	As a result, $h(z')$ can be extended to a harmonic function on a neighborhood of $x_0$. Since $x_0\in \gamma$ is arbitrarily chosen, $h(z')$ can be extended to be a harmonic function on $\Omega$. 
	$H(z'):=h(p(z'))$ is therefore a harmonic function on $\Delta$ such that $$H(z')=\int_{\gamma}\widetilde{d} G_{\Omega}(\cdot,z')+2k\pi,$$
	 where $k=0$ if $z'\in p^{-1}(\Omega_1)$ and $k=1$ if $z'\in p^{-1}(\Omega_2)$. 
	
	Now suppose that $\Omega\backslash\gamma$ is connected. $d\left(\int_{\gamma}\widetilde{d} G_{\Omega}(\cdot,z')\right)$ is a  smooth closed differential form on $\Omega\backslash \gamma$. For any point $x_0\in \gamma$, choose a connected coordinate neighborhood $\Omega'$ of $x_0$ which can be divided by $\gamma\cap\Omega'$ into two connected components denoted by 
	$\Omega'_1$ and $\Omega'_2$, where $\Omega'_1$ is left to $\gamma\cap\Omega'$, $\Omega'_2$ is right to $\gamma\cap\Omega'$. Define
	$$h(z')=\int_{\gamma}\widetilde{d} G_{\Omega}(\cdot,z')$$
	for $z'\in\Omega'_1$ and
	$$h(z')=\int_{\gamma}\widetilde{d} G_{\Omega}(\cdot,z')+2\pi$$
	for $z'\in\Omega'_2$. 
	Similarly, $h(z')$ can be extended to a harmonic function on a neighborhood $\Omega'$ of $x_0$. Notice it that $d\left(\int_{\gamma}\widetilde{d} G_{\Omega}(\cdot,z')\right)=dh(z')$ on $\Omega'$, and $x_0\in \gamma$ is arbitrarily chosen.  $d\left(\int_{\gamma}\widetilde{d} G_{\Omega}(\cdot,z')\right)$ can be extended to be a smooth closed form on $\Omega$, and 
	we denote it by $\omega$. Then $p^*\omega$ is a smooth closed form on $\Delta$.
	
	Fix a connected component $U$ of $\Delta\backslash p^{-1}(\gamma)$, then $\int_{\gamma}\widetilde{d} G_{\Omega}(\cdot,p(z'))$ is a harmonic function on $U$, and its differential is $p^*\omega$.
	Since $\Delta$ is simply connected, $\int_{\gamma}\widetilde{d} G_{\Omega}(\cdot,p(z'))$ can be extended to a harmonic function denoted by $H(z')$ on $\Delta$ through $p^*\omega$. 
	
It is clear that $p(U)=\Omega\backslash\gamma$. Thus, for any point $z_1'\in\Delta\backslash p^{-1}(\gamma)$, there exists a point $z_2'\in U\cap p^{-1}(p(z_1'))$, such that $p(z_1')=p(z_2')$. Let $\tilde\gamma$ be a curve in $\Delta$ from $z_2'$ to $z_1'$. $p_*\tilde\gamma$ is a smooth closed curve in $\Omega$, and after a homotopy if necessary (similarly to the proof of Lemma \ref{lemma13}), we can assume that $p_*\tilde\gamma$ intersects $\gamma$ at finite many points.
	Divide $[0,1]$ into $I_j=[t_{j-1},t_j]$ for $1\le j\le N$, such that $p_*\tilde\gamma|_{(t_{j-1},t_j)}$ intersects $\gamma$ at one point and $p_*\tilde\gamma(t_j)\notin \gamma$. By the construction of $\omega$ and $H(z')$, we have
	\begin{align}
		H(\tilde\gamma(t_j))&=H(\tilde\gamma(t_{j-1}))+\int_{\tilde\gamma|_{I_j}}p^*\omega \notag \\
		&=H(\tilde\gamma(t_{j-1}))+\int_{p_*\tilde\gamma|_{I_j}}\omega \notag \\
		&=H(\tilde\gamma(t_{j-1}))+\int_{\gamma}\widetilde{d} G_{\Omega}(\cdot,p_*\tilde\gamma(t_j))-\int_{\gamma}\widetilde{d} G_{\Omega}(\cdot,p_*\tilde\gamma(t_{j-1}))+2k_j\pi\notag,
	\end{align}
	where $k_j=1$ or $-1$, depending on whether $p_*\tilde\gamma|_{I_{j}}$ crossing $p_*\tilde\gamma$ from right to left or not. Since $p_*\tilde\gamma(t_N)=p_*\tilde\gamma(t_0)$,
	\begin{align}
		H(z_1')-H(z_2')&=\sum_{i=1}^N (H(\tilde\gamma(t_i))-H(\tilde\gamma(t_{i-1})))\notag\\
		&=\sum_{i=1}^N \left(\int_{\gamma}\widetilde{d} G_{\Omega}(\cdot,p_*\tilde\gamma(t_j))-\int_{\gamma}\widetilde{d} G_{\Omega}(\cdot,p_*\tilde\gamma(t_{j-1}))+2k_j\pi\notag\right)\\
		&=\sum_{i=1}^N 2k_i\pi\notag.
	\end{align}
	Since $z_2'\in U$ and $p(z_2')=p(z_1')$, we have  $H(z_2')=\int_{\gamma}\widetilde{d} G_{\Omega}(\cdot,p(z_2'))=\int_{\gamma}\widetilde{d} G_{\Omega}(\cdot,p(z_1'))$. Therefore 
	$$H(z_1')=\int_{\gamma}\widetilde{d} G_{\Omega}(\cdot,p(z_1'))+2k\pi,$$ where $k=\sum_{j=1}^N k_j$ ia an integer.
	
	Suppose that this Lemma has been proved when $n=k-1$, $k\ge 2$. Assume that $\gamma$ intersects with itself at $k$ points. Denote $\gamma(t_1)$ to be the first point of $\gamma$ at which $\gamma$ intersects with itself, and $\gamma(t_1)=\gamma(t_0)$, $0\le t_0<t_1$. 
	
	Let $\tilde\gamma_1(t):[0,t_1-t_0]\rightarrow \Omega$ satisfy $\tilde\gamma_1(t)=\gamma(t-t_0)$; 
	Let $\tilde\gamma_2(t):[0,1-t_1+t_0]$ satisfy $\tilde\gamma_2(t)=\gamma(t)$ on $[0,t_0]$ and $\tilde \gamma_2(t)=\tilde\gamma(t+t_1-t_0)$ on $[t_0,1-t_1+t_0]$.
	Note that $\tilde\gamma_1$ is a smooth simple closed curve and $\tilde\gamma_2$ is a smooth closed curve which intersects with itself at most $k-1$ points. By assumption, for $1\le j\le2$, there exists a harmonic function $H_j$ on $\Delta$
	such that for any point $z'\in\Delta\backslash p^{-1}(\tilde\gamma_j)$, $$H_j(z')=\int_{\tilde\gamma_j}\widetilde{d} G_{\Omega}(\cdot,p(z'))+2k_j'\pi,$$ where $k_j'$ is an integer depending on $z'$.
		Define $H(z')=H_1(z')+H_2(z')$ on $\Delta\backslash p^{-1}(\gamma)$,
	\begin{align}
		H_1(z')+H_2(z')&=\int_{\tilde\gamma_1}\widetilde{d} G_{\Omega}(\cdot,p(z'))+2k_1'\pi+\int_{\tilde\gamma_2}\widetilde{d} G_{\Omega}(\cdot,p(z'))+2k_2'\pi\notag\\
		&=\int_{\gamma}\widetilde{d} G_{\Omega}(\cdot,p(z'))+2(k_1'+k_2')\pi\notag,
	\end{align}
	where $k_1'$ and $k_2'$ are two integers depending on $z'$, which completes the induction.
	
	Thus, Lemma \ref{lemma14} has been proved.
\end{proof}

\begin{Remark}\label{remark:example}
	From the proof of Lemma \ref{lemma14}, we can see that the following two statements are equivalent:
	
	$(1)$ each simple closed curve divides $\Omega$ into two disconnected sets;  
	
	$(2)$ for every piecewise smooth closed curve $\gamma$, there exists a harmonic function $h(z')$ on $\Omega$ such that 
	$h(z')=\int_{\gamma}\widetilde{d} G_{\Omega}(\cdot,z')+2k\pi$ for $z'\in\Omega\backslash \gamma$, where $k$ is an integer depending on $z'$.
	
It is clear that statement $(1)$ holds for any open subsets of $\mathbb{C}$,  and  statement $(1)$ does not hold when $\Omega$ is a torus less a small closed disc.
\end{Remark}

%%%------------------------------------------------------------------------

\vspace{.1in} {\em Acknowledgements}. The authors would like to thank  Dr. Shijie Bao and Dr. Zhitong Mi for checking the manuscript and  pointing out some typos. The first named author was supported by National Key R\&D Program of China 2021YFA1003100, NSFC-11825101, NSFC-11522101 and NSFC-11431013.

\bibliographystyle{references}
\bibliography{xbib}

\end{document}